\newtheorem{definition}{Definition}[section]
\newtheorem{theorem}[definition]{Theorem}
\newtheorem{lemma}[definition]{Lemma}
\newtheorem{corollary}[definition]{Corollary}
\newtheorem{proposition}[definition]{Proposition}
\newtheorem{remark}[definition]{Remark}
\DeclareMathOperator{\SL}{SL}
\DeclareMathOperator{\GL}{GL}
\DeclareMathOperator{\Sp}{Sp}
\DeclareMathOperator{\rk}{rk}
\DeclareMathOperator{\GSp}{GSp}
\DeclareMathOperator{\ord}{ord}
\DeclareMathOperator{\tr}{tr}
\DeclareMathOperator{\disc}{disc}
\DeclareMathOperator{\Cl}{Cl}
\DeclareMathOperator{\diag}{diag}
\DeclareMathOperator{\USp}{USp}
\DeclareMathOperator{\vol}{vol}
\DeclareMathOperator{\cont}{cont}
\DeclareMathOperator{\PGSp}{PGSp}
\DeclareMathOperator{\Imm}{Im}
\DeclareMathOperator{\Sh}{Sh}
\DeclareMathOperator{\SK}{SK}
\DeclareMathOperator{\PGL}{PGL}
\DeclareMathOperator{\St}{St}
\providecommand{\abs}[1]{\left\lvert#1\right\rvert}
\providecommand{\norm}[1]{\lvert\lvert#1\rvert\rvert}
\newenvironment{myitem}
{\begin{itemize}

\setlength{\itemsep}{0pt}
\setlength{\parsep}{0pt}}
{\end{itemize}}
\titleformat*{\section}{\normalsize\centering\scshape}  
\title{\textrm{\textbf{\normalsize LOCAL SPECTRAL EQUIDISTRIBUTION FOR DEGREE TWO SIEGEL MODULAR FORMS IN LEVEL AND WEIGHT ASPECTS}}}
\author{\textrm{\scriptsize MARTIN DICKSON}}
\date{}
\numberwithin{equation}{section}
\begin{document}
\maketitle

\quote{\textsc{Abstract}.  We prove an equidistribution result for the Satake parameters of the local representations attached to Siegel cusp forms of degree $2$ of increasing level and weight, counted with a certain arithmetic weight.  We then apply this to compute the symmetry type of a similarly weighted distribution of the low-lying zeros of $L$-functions attached to these cusp forms.}

\section{Introduction}\label{introduction}

Classical Siegel modular forms of degree $2$, weight $k$, and level $N$ may be viewed as vectors inside cuspidal automorphic representations of $\GSp_4$.  As one increases the weight and level one might expect that these representations vary in a \textit{family}.  A recent survey article of Kowalski (\cite{Kowalski2011}) proposes that a reasonable family of automorphic representations should satisfy the following condition: after defining an appropriate notion of a conductor and way to count (i.e., an appropriate measure on) cuspidal automorphic representations up to a given conductor, the local components of these representations should, as we increase the conductor, be equidistributed amongst all possibilities.  Not only this, but the distributions of the local components at different places should be independent.  The purpose of this paper is to show, building on the paper \cite{KowalskiSahaTsimerman2012}, that this property holds for the representations generated by holomorphic Siegel modular forms of degree $2$.  \\

\noindent  A motivating example for us of this behaviour is due to Serre (\cite{Serre1997}, Th\'{e}or\`{e}me 1) and, independently, Conrey--Duke--Farmer (\cite{ConreyDukeFarmer1997}, Theorem 1).  Fix a finite set $S$ of primes, and  consider the set of all holomorphic cusp forms on $\SL_2(\mathbb{Z})$ of weight $k$, level $N$, and trivial nebentypus, with $k$ varying over even positive integers and $N$ varying over integers with none of their prime factors in $S$.  At the places $p \in S$ the local representation attached to a holomorphic cuspidal eigenform is unramified and is determined by the Hecke eigenvalue $\lambda(p; f)$ of $f$ under $T(p)$.  The above papers prove that, as $k$ and $N$ vary as above, normalized eigenvalues $\lambda'(p; f) = \lambda(p; f)/2p^{(k-1)/2}$ are equidistributed with respect to the $p$-adic Plancherel measure on $[-1, 1]$.\begin{footnote}{This is related to, but easier than, the Sato--Tate problem, where one fixes the modular form but varies the prime.}\end{footnote}  This implies, without invoking the machinery of Deligne, that ``most'' cusp forms satisfy the Ramanujan--Petersson conjecture $\lambda'(p; f) \in [-1, 1]$; but it also ties down a precise measure with respect to which the points $\lambda'(p; f)$ equidistribute.  Moreover, the joint asymptotic distribution for different primes $p$ are independent (see \S5.2 of \cite{Serre1997}).  A related result of Sarnak (\cite{Sarnak1987}) states that if one considers the $p$th Fourier coefficient of Maass forms averaged over the Laplacian eigenvalues, one finds that they are equidistributed with respect to this same measure.  Another example, pertinent for us, is a weighted version of the quoted result of \cite{Serre1997} and \cite{ConreyDukeFarmer1997} which is implicit in the work of Bruggeman (\cite{Bruggeman1978}).\\

\noindent The above results can be understood as solutions to \textit{equidistribution problems}, which can be set up very generally as follows: let $X$ be a topological space, $V$ a finite dimensional complex vector space endowed with an inner product $\langle, \rangle$, $Q$ any fixed non-negative quadratic form on $V$, and $H$ a finitely generated commutative algebra of hermitian operators acting on $V$.  Suppose that whenever $v \in V$ is an eigenvector for $H$ it has associated to it a point $a(v) \in X$ such that if $v_1$ and $v_2$ lie in the same eigenspace then $a(v_1) = a(v_2)$.  For each $v \in V$ let $\omega(v) = Q(v)/\langle v, v \rangle$.  For each orthogonal basis $\mathcal{B}$ of $V$ consisting of eigenforms for $H$, consider the measure on $X$ given by $\nu_{V, \omega} = \sum_{v \in \mathcal{B}} \omega(v) \delta_{a(v)}$ (where $\delta$ is the Dirac mass).

\indent Now suppose we keep $X$ fixed but vary $V$ over a sequence of finite dimensional vector spaces: then it makes sense to ask whether the sequence of measure $\nu_{V, \omega}$ converges weakly to some canonical measure $\mu$ on $X$.  In other words, we ask whether the points $a(v)$ with $v \in \mathcal{B}$ counted with the ``weighting'' $\omega$ equidistribute.  We are specifically interested in the case when $V$ is the space of automorphic forms on some group of some fixed conductor and infinity type (as we allow the conductor to increase), $H$ is the local Hecke algebra at $p$ for some fixed prime $p$ (or more generally the algebra generated by finitely many Hecke operators), and for an eigenform $f$ of $H$ the point $a(f)$ is the local Satake parameter at the prime $p$.  The problem is then one of spectral equidistribution, and the results of Serre and Conrey--Duke--Farmer is spectral equidistribution of holomorphic cusp forms when the weighting $\omega$ is constant.\\

\noindent  As stated above we are concerned with the case of holomorphic cusp forms on $\PGSp_4$, i.e. holomorphic Siegel modular forms with trivial character; the relevant congruence group is
\[\Gamma_0(N) = \left\{\left(\begin{smallmatrix} A & B \\ C & D \end{smallmatrix}\right) \in \Sp_4(\mathbb{Z}); \: C \equiv 0 \bmod N\right\}.\]
We write $\mathcal{S}_k(N)$ for the space of Siegel modular forms of weight $k$, level $\Gamma_0(N)$, and trivial nebentypus.  In order to state a version of the main result we assume a certain amount of familiarity with automorphic representations; the relevant parts of the theory will be explained in \S\ref{reps-general} and \S\ref{the-equidistribution-problem}.  As before fix a finite set of primes $S$.  Let $f \in \mathcal{S}_k(N)$, and suppose that $f$ is an eigenform for the local Hecke algebras at all primes in $S$.  Then for each prime $p \in S$ there is a spherical principal series representation $\pi_{f, p}$ of $\GSp_4(\mathbb{Q}_p)$ generated by $f$ (see Remark \ref{well-defined-isom-class-of-spsr}).  The isomorphism class of $\pi_{f, p}$ is determined by the eigenvalues of $f$ for the Hecke operators $T(p)$ and $T_1(p^2)$. Equivalently, the isomorphism class of $\pi_{f, p}$ is determined by the orbit of the Satake parameters $(a_p(f), b_p(f))$ of $\pi_{f, p}$ under a certain Weyl group which we denote by $W$.  Now it is known that the Satake parameters satisfy $0 < \abs{a_p(f)}, \abs{b_p(f)} \leq \sqrt{p}$, and the very deep generalized Ramanujan conjecture for $\GSp_4$ (a proof of which has recently appeared in \cite{Weissauer2009}) implies that if $f$ is not a Saito-Kurokawa lift then $\abs{a_p(f)} = \abs{b_p(f)} = 1$.  (If $f$ \textit{is} a Saito--Kurokawa lift then it easily follows that for an appropriate representative in the Weyl group orbit we have $\abs{a_p(f)} = 1$ and $\abs{b_p(f)} = \sqrt{p}$.)  We can therefore regard the local components as points on the space $Y_p = \{(a, b) \in \mathbb{C}^\times \times \mathbb{C}^\times;\: 0 < \abs{a}, \abs{b} \leq \sqrt{p}\}/W$, and the local components of the representations attached to non-Saito--Kurokawa lifts actually lie inside $I_p$ the product of the two unit circles in $Y_p$ (or rather its image under the quotient by $W$; this is the subspace of \textit{tempered representations}).  The main theorem can then be stated as follows:

\begin{theorem}  [Local equidistribution and independence, prototypical version\begin{footnote}{Theorem \ref{local-equidistribution-theorem-qualitive} is a slightly more general version of this theorem, which in fact contains an infinite family of local equidistribution and independence statement indexed by fundamental discriminants $-d$ ($d>0$) and characters $\Lambda$ of the ideal class group of $\mathbb{Q}(\sqrt{-d})$.  The above is $d=4$, $\Lambda = \mathbf{1}$.  The measure $\nu_{S, N, k}$ does not depend on the choice of basis (see Lemma \ref{measure-well-defined}) and in Theorem \ref{local-equidistribution-theorem-qualitive} a slight relaxation on the basis is allowed.  Theorem \ref{local-equidistribution-theorem-quantitative} is a quantitative version of Theorem \ref{local-equidistribution-theorem-qualitive}.}\end{footnote}]\label{local-equidistribution-theorem-prototype}  Let $S$ be a finite set of primes.  Let $k \geq 6$ be even and let $N \geq 1$ have none of its prime factors in $S$.  Let $Y_S = \prod_{p \in S} Y_p$, and define a measure $\nu_{S, N, k}$ on $Y_S$ by
\[\nu_{S, N, k} = \sum_{f \in \mathcal{S}_k(N)^*} \omega_{f, N, k}\delta_{\pi_S}(f),\]
where
\[\omega_{f, N, k} = \frac{\sqrt{\pi}(4\pi)^{3-2k} \Gamma\left(k - \frac{3}{2}\right) \Gamma(k-2)}{\vol(\Gamma_0(N) \backslash \mathbb{H}_2)} \frac{\abs{a(1_2; f)}^2}{4\langle f, f \rangle},\]
$\mathcal{S}_k(N)^*$ is any orthogonal basis for $\mathcal{S}_k(N)$ consisting of eigenforms for the local Hecke algebra at all $p \in S$, $\pi_{S}(f) = \prod_{p \in S} (a_p(f), b_p(f)) \in Y_S$, and $\delta$ denotes Dirac mass.  Then, as $k +N \rightarrow \infty$ with $k \geq 6$ varying over even integers and $N \geq 1$ varying over integers with none of their prime factors in $S$, the measure $\nu_{S, N, k}$ converges weak-$*$ to a certain product measure $\mu_S = \prod_p \mu_p$ on $Y_S$, which is the measure\begin{footnote}{The measures $\mu_p$ and their product $\mu_S$ will be constructed in detail in \S\ref{the-equidistribution-problem}.  In particular we will see that $\mu_S$ is actually supported on $I_p$.}\end{footnote} referred to in \cite{FurusawaShalika2002} as the Plancherel measure for the local Bessel model associated to $(4, \mathbf{1})$.  That is, for any continuous function $\varphi$ on $Y_S$,
\[\lim_{k+N \rightarrow \infty} \sum_{f \in \mathcal{S}_k(N)^*} \omega_{f, N, k} \:\varphi((a_p(f), b_p(f))_{p \in S}) = \int_{Y_S} \varphi \:  d\mu_{S}.\]
In particular if $\varphi = \prod_{p \in S} \varphi_p$ is a product function then
\[\lim_{k+N \rightarrow \infty} \sum_{f \in \mathcal{S}_k(N)^*} \omega_{f, N, k} \varphi((a_p(f), b_p(f))_{p \in S}) = \prod_{p \in S} \int_{Y_p} \varphi_p\: d\mu_p.\]\end{theorem}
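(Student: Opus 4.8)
\noindent The plan is to verify the stated weak-$*$ convergence by a Weyl-type criterion, testing $\nu_{S,N,k}$ against the functions coming from the local Hecke algebras, reducing each resulting average to an explicit ``geometric'' quantity via a Poincaré series, and then separating a main term --- a purely local computation which reproduces the Furusawa--Shalika Bessel--Plancherel measure --- from error terms that decay as $k+N\to\infty$. To set up the reduction: each $Y_p$ carries the algebra $\mathcal{A}_p$ of $W$-invariant Laurent polynomials in $(a,b)$, i.e.\ the image of the local Hecke algebra $\mathcal{H}_p$ under the Satake isomorphism; the elements of $\mathcal{A}_p$ separate $W$-orbits and are bounded on the compact tempered locus $I_p$, so by Stone--Weierstrass the span of the product functions $\prod_{p\in S}\varphi_p$ with $\varphi_p\in\mathcal{A}_p$ is dense in $C(I_S)$, $I_S=\prod_{p\in S}I_p$. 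Granting a tightness statement --- that the $\nu_{S,N,k}$-mass at non-tempered (Saito--Kurokawa) points tends to $0$, addressed below --- it then suffices to prove, for every $T=\prod_{p\in S}T_p$ with $T_p\in\mathcal{H}_p$,
\[\lim_{k+N\to\infty}\ \sum_{f\in\mathcal{S}_k(N)^*}\omega_{f,N,k}\,\lambda_f(T)\ =\ \int_{Y_S}\widehat{T}\,d\mu_S,\qquad \widehat{T}=\prod_{p\in S}\widehat{T}_p,\quad \lambda_f(T)=\prod_{p\in S}\lambda_f(T_p);\]
the product assertion for product test functions is then automatic from $\mu_S=\prod_p\mu_p$, and independence of the basis is Lemma~\ref{measure-well-defined}.

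To produce the geometric side, introduce the Siegel Poincaré series $P_{1_2}$ of weight $k$ and level $\Gamma_0(N)$ attached to the index $1_2$, normalized (this is the role of the $\Gamma$-factors in $\omega_{f,N,k}$) so that $g\mapsto\langle g,P_{1_2}\rangle$ represents, up to the explicit constant in $\omega_{f,N,k}$, the functional $g\mapsto\overline{a(1_2;g)}$ on $\mathcal{S}_k(N)$; the construction and unfolding of such a series are classical (Kitaoka) and underlie \cite{KowalskiSahaTsimerman2012}. Since for $p\in S$ (coprime to $N$) the operators $T_p$ are self-adjoint, mutually commuting, and preserve $\mathcal{S}_k(N)$, expanding $T\,P_{1_2}$ spectrally and reading off its $1_2$-th Fourier coefficient gives
\[\sum_{f\in\mathcal{S}_k(N)^*}\omega_{f,N,k}\,\lambda_f(T)\ =\ (\text{explicit constant})\cdot a\!\left(1_2;\,T\,P_{1_2}\right).\]
Unfolding $P_{1_2}$ now writes $a(1_2;T\,P_{1_2})$ as an absolutely convergent sum over a set of double cosets / integral symplectic matrices: the identity coset yields the main term, and the remaining cosets contribute generalized Kloosterman (Salié--Kitaoka) sums weighted by archimedean Bessel-type integrals.

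It remains to (i) identify the main term with $\int_{Y_S}\widehat{T}\,d\mu_S$ and (ii) bound the rest by $o(1)$. For (i): by construction the identity-coset term factors through the primes of $S$, and applying $T_p$ to the ``trivial'' piece of $P_{1_2}$ and extracting the $1_2$-th Fourier coefficient is exactly a $p$-adic Bessel integral whose value, as a linear functional of $T_p$, is $\int_{Y_p}\widehat{T}_p\,d\mu_p$ with $\mu_p$ the Plancherel measure of the local Bessel model for $(4,\mathbf{1})$ in the sense of \cite{FurusawaShalika2002}; multiplying over $p\in S$ gives $\int_{Y_S}\widehat{T}\,d\mu_S$, and the $\varphi\equiv 1$ case confirms that $\mu_S$ is a probability measure. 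For (ii): estimate the Kloosterman sums by Weil/Kitaoka-type bounds and the archimedean factors by decay of the relevant Bessel functions; the Bessel decay is a negative power of $k$ (the weight saving), while the coprimality of $N$ to $S$ together with the moduli occurring in the unfolding forces a negative power of $N$ (the level saving), so that the off-diagonal contribution is $O_{T,\varepsilon}((k+N)^{-\delta})$ for some $\delta>0$. Finally, the tightness input: the Saito--Kurokawa subspace of $\mathcal{S}_k(N)$ is of much smaller dimension, and on it $\omega_{f,N,k}$ is controlled well enough --- via Kohnen--Skoruppa/Böcherer it is essentially a central $L$-value of the Shimura correspondent --- that $\sum_{f\in\SK}\omega_{f,N,k}\,\abs{\widehat{T}(\pi_S(f))}\to 0$; this supplies the tightness used above and shows that the Saito--Kurokawa points, on which $\abs{b_p}=\sqrt{p}$, do not affect the limit, consistently with $\mu_S$ being supported on $I_S$. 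Assembling these pieces yields the theorem.

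The step I expect to be the main obstacle is (ii) in the level aspect: extracting a genuine power saving in $N$ from the Kloosterman sums of the level-$\Gamma_0(N)$ Siegel Poincaré series, uniformly over Hecke operators $T$ supported at primes coprime to $N$ --- the weight aspect at level one being essentially the content of \cite{KowalskiSahaTsimerman2012}. A secondary difficulty is verifying that the local main term is exactly the Furusawa--Shalika Bessel--Plancherel measure (and not merely proportional to it), together with carrying the non-compactness of the $Y_p$ through the Weyl criterion via the Saito--Kurokawa tail bound.
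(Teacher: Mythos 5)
Your outline correctly identifies the structure — Weyl criterion, a Petersson/Poincar\'e geometric-spectral comparison, a main term coming from the identity coset, error terms from Kloosterman-type sums with a saving in both $k$ and $N$ — and you are right that the real technical obstacle is the level-aspect power saving (this is the content of \S\ref{sctn:fourier-coeff-bound}, Theorem \ref{bound-on-poincare-fourier-coefficients}). However, there are two substantive points of divergence from the paper's argument, one a matter of route and one a genuine gap.

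The paper does not apply Hecke operators to a Poincar\'e series. It tests against the explicit functions $U_p^{l,m}$ of Theorem \ref{sugano-formula} (Sugano), which span $C(Y_p)$, and uses the local-global compatibility of Bessel models to turn $\abs{a(d,\Lambda;f)}^2 \prod_p U_p^{l_p,m_p}(a_p(f),b_p(f))$ directly into a bilinear expression in Fourier coefficients $a(\mathsf{S}_{c'};f)\,a(\mathsf{S}_c^{L,M};f)$ (equation \eqref{local-bessels-in-terms-of-sugano}); the Petersson--Poincar\'e relation (Lemma \ref{poincare-petersson-lemma}) then converts the $f$-average of this to $a(\mathsf{S}_c^{L,M}; G_{\mathsf{S}_{c'},N,k})$, and the identity-coset term is by Proposition \ref{plancherel-integral-U_p} literally $\int U_p^{l,m}\,d\mu_p = \delta(l,m)$. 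Your version instead proposes to expand $T P_{1_2}$ spectrally, extract $a(1_2; T P_{1_2})$, and identify the identity-coset piece with ``a $p$-adic Bessel integral whose value is $\int \widehat T_p\,d\mu_p$.'' This last identification is exactly the nontrivial local content — it is not a formal consequence of unfolding, and as you yourself flag as a ``secondary difficulty,'' the normalization is not for free. Making it precise would require either working out how $T_p$ acts on the Fourier coefficients of $P_{1_2}$ and matching the resulting combination of representation numbers against the Bessel--Plancherel integral, or simply re-deriving something equivalent to Sugano's formula and the orthogonality relations of \cite{KowalskiSahaTsimerman2012} Proposition 4.2. As written this step is asserted rather than proved, and it is precisely where the Bessel-model machinery of the paper earns its keep.

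Second, the tightness reduction to $I_S$ is unnecessary and in fact a detour. The quotient $Y_p$ is compact (after the $W$-action one may take $1 \le \abs{a},\abs{b} \le \sqrt{p}$), the functions $U_p^{l,m}$ span $C(Y_p)$ on the nose, and the measures $\nu_{S,N,k}$ have total mass $1+o(1)$ by the $(l,m)=(0,0)$ case of Proposition \ref{convergence-on-U_p}; so Weyl's criterion applies directly on $Y_S$ with no separate control on the Saito--Kurokawa mass. The SK forms do require special care in the paper, but only in \S\ref{sctn:saito-kurokawa-lifts}--\S\ref{sctn:low-lying-zeros}, at \emph{ramified} primes, and for the low-lying-zeros application — not for Theorem \ref{local-equidistribution-theorem-prototype} itself, where the support of $\mu_S$ on $I_p$ is an output of the computation, not an input.
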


\noindent  Theorem \ref{local-equidistribution-theorem-prototype} is a generalization of Theorem 1.6 of \cite{KowalskiSahaTsimerman2012}, which deals with the case when $N$ is fixed equal to $1$.\begin{footnote}{Setting $N=1$ it appears we have an extra factor of $\vol(\Gamma_0(1) \backslash \mathbb{H}_2)$.  However, our Petersson norms are normalised whereas those of \cite{KowalskiSahaTsimerman2012} are not, so the weights are in fact the same.}\end{footnote}  Note that the cusp forms are only required to be eigenfunctions at $p \in S$, a point which does not seem to have been emphasised in previous work in equidistribution.  Our methods of proof follow those of \cite{KowalskiSahaTsimerman2012}, with some small changes when arguing with Bessel models in \S\ref{bessel-models} and the main modifications coming from the need to track the dependency on both the weight $k$ and level $N$ in certain estimates of Fourier coefficients of cusp forms, carried out in \S\ref{sctn:fourier-coeff-bound}.  As well as the case considered in \cite{KowalskiSahaTsimerman2012} ($N=1$, $k$ varying) this result generalizes one which recently appeared in \cite{ChidaKatsuradaMatsumoto2011} ($k$ treated constant, $N$ varying); in treating the mixed case we also obtain a better decay with respect to $N$ than that in \cite{ChidaKatsuradaMatsumoto2011}.  Allowing both the weight and level allows for the most general notion of conductor in this context, so Theorem \ref{local-equidistribution-theorem-prototype} settles the question of local equidistribution and independence (at unramified primes) for representations attached to classical Siegel modular forms.  We remark now that we will actually prove a quantitative version of this (Theorem \ref{local-equidistribution-theorem-quantitative}), which will be useful for applications.\\

\noindent  In recent work (\cite{Shin2012}, \cite{ShinTemplier2013}) Shin and Shin--Templier have proved a very general local equidistribution and independence statement.  For any cuspidal automorphic representation of a reductive group $G$ with a discrete series representation at the archimedean place\begin{footnote}{We have stated a weak version of their result relevant to our setup, but their theorem is much more general.  In particular $\mathbb{Q}$ can be replaced by any totally real field and what follows is true verbatim.  The condition that the archimedean component admits a discrete series representation is, however, important.}\end{footnote}, they are able to count cuspidal automorphic representations with their natural weight $1$ (in contrast to the weight $\omega_{f, N, k}$ appearing in ours) and prove a local equidistribution, with the limit measure a suitable normalization of Plancherel measure, on the unitary dual of $G(\mathbb{Q}_p)$; and moreover they prove the expected independence as well.  The limits are taken in either the increasing weight or level aspect, and it is expected an appropriate combination of their arguments would deal with the mixed case.  The groups satisfying these hypotheses include $\GSp_4$ and also higher rank symplectic groups, as well as $\GL_1$ and $\GL_2$ (but not $\GL_n$ for $n \geq 3$).  \\

\noindent  Our Theorem \ref{local-equidistribution-theorem-prototype} (even for either fixed level or weight aspect) is not contained the work of Shin--Templier, due to our different weights in counting.  In fact, we see that the presence of the arithmetic factor in our weight affects the limiting measure (that is, our limit measure is \textit{not} Plancherel measure -- for more details on our limit measure see \S\ref{the-equidistribution-problem}).  Such behaviour has been observed in local equidistribution problems of cuspidal automorphic representations on general linear groups in various families when the representations are counted weighted by special values of associated $L$-functions.  Our arithmetic factor $\abs{a(1_2; f)}^2/\langle f, f\rangle$ \textit{prima facie} does not appear to be so significant, but, at least when $f$ is an eigenform, a deep conjecture of B\"{o}cherer relates $\abs{a(1_2; f)}^2$ to $L(1/2, f)L(1/2, f \times \chi_{-4})$ (the $L$-functions are normalized to have functional equation relating $s$ with $1-s$).  In fact, the effect of the factor on the equidistribution problem can be interpreted as \textit{evidence} for B\"{o}cherer's conjecture -- see \cite{KowalskiSahaTsimerman2012} \S5.4 for a discussion of this when $N=1$.  \\

\noindent  After obtaining a quantitative version of Theorem \ref{local-equidistribution-theorem-prototype} we turn to the problem of low-lying zeros of $L$-functions of Siegel modular forms of weight $k$ and level $N$.  We attach the ``spin'' $L$-function $L(s, \pi_f)$ to an irreducible constituent $\pi_f$ of the cuspidal automorphic representation generated by $f$, and for any even Schwartz function $\Phi$ whose Fourier transform has compact support we consider
\[D(\pi_f; \Phi) = \sum_\rho \Phi\left(\frac{\gamma}{2\pi} \log C_{k, N}\right)\]
where $\rho = 1/2 + i\gamma$ varies over all zeros of $L(s, \pi_f)$ inside the critical strip with multiplicity, and $C_{k, N}$ is a certain analytic conductor as defined in \S\ref{sctn:low-lying-zeros}.  We assume the Riemann hypothesis: namely all $\gamma \in \mathbb{R}$.  $D(\pi_f; \Phi)$ reflects the distribution of the low-lying zeros of the single $L$-function $L(s, \pi_f)$.  We study an averaged version of this: let
\[D(N, k; \Phi) = \frac{1}{\sum_{f \in \mathcal{S}_k(N)^*}\omega_{f, N, k}} \sum_{f \in \mathcal{S}_k(N)^\#} \omega_{f, N, k} D(\pi_f; \Phi)\]
where $\omega_{f, N, k}$ is the weight from Theorem \ref{local-equidistribution-theorem-prototype}\begin{footnote}{Theorem \ref{local-equidistribution-theorem-prototype} has a version for more general weights.  In our treatment of low-lying zeros we stick to this special case for simplicity.}\end{footnote} and $\mathcal{S}_k(N)^{\#}$ consists of eigenfunctions of all Hecke operators at \textit{all} $p \nmid N$ (in contrast to $\mathcal{S}_k(N)^*$ above).  The distribution of the low-lying zeros is then described as follows:

\begin{theorem}\label{thm:low-lying-zeros}  Let $\Phi : \mathbb{R} \to \mathbb{R}$ be an even Schwartz function such that the Fourier transform $\widehat{\Phi}(t) = \int_\mathbb{R} \Phi(x) e^{-2\pi i x t} dx$ has compact supported contained in $[-\alpha, \alpha]$ where $\alpha < 2/9$.  Then
\[\lim_{k + N \rightarrow \infty} D(N, k; \Phi) = \int_\mathbb{R} \Phi(x) W(\Sp)(x) dx\]
as $k$ varies over even integers and $N$ varies over squarefree\begin{footnote}{Note that this was not assumed when considering the local equidistribution and independence.}\end{footnote} positive integers, and where $W(\Sp)$ is the kernel for symplectic symmetry
\[W(\Sp)(x) = 1 - \frac{\sin 2 \pi x}{2 \pi x}.\]\end{theorem}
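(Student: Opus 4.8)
The plan is to run the Katz--Sarnak recipe: apply the Weil explicit formula to each spin $L$-function $L(s,\pi_f)$, average over $f$ against the weights $\omega_{f,N,k}$, and evaluate the resulting sums over prime powers by means of the quantitative local equidistribution statement of Theorem \ref{local-equidistribution-theorem-quantitative}.

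\textbf{Step 1: the explicit formula.} For $f\in\mathcal S_k(N)^{\#}$ the completed spin $L$-function has degree $4$ and satisfies a functional equation $s\leftrightarrow 1-s$; at $p\nmid N$ its local roots are $\{a_p(f),b_p(f),a_p(f)^{-1},b_p(f)^{-1}\}$, which for a non-Saito--Kurokawa form lie on the unit circle. Feeding $\Phi$ into the explicit formula and rescaling the zeros (under RH, $\rho=1/2+i\gamma$) by $\tfrac{1}{2\pi}\log C_{k,N}$ yields, for each such $f$,
\[D(\pi_f;\Phi)=\widehat{\Phi}(0)\,\frac{\log\mathfrak q(\pi_f)}{\log C_{k,N}}-\frac{2}{\log C_{k,N}}\sum_{p,\,m\geq 1}\frac{\log p}{p^{m/2}}\,\lambda_f(p^m)\,\widehat{\Phi}\!\left(\frac{m\log p}{\log C_{k,N}}\right)+O\!\left(\frac{1}{\log C_{k,N}}\right),\]
where $\mathfrak q(\pi_f)$ is the analytic conductor (finite and archimedean parts) and $\lambda_f(p^m)=\sum_i\beta_{i,p}(f)^m$ is the $m$-th power sum of the spin roots; the error absorbs the archimedean digamma contribution and the finitely many ramified primes $p\mid N$ (each carrying roots of absolute value $\leq 1$, with total contribution $o(\log C_{k,N})$). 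Since $C_{k,N}$ is defined (\S\ref{sctn:low-lying-zeros}) so that $\log\mathfrak q(\pi_f)/\log C_{k,N}\to 1$ uniformly over the family, the first term equals $\widehat{\Phi}(0)+o(1)$.

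\textbf{Step 2: averaging and extraction of the main term.} Since $\omega_{f,N,k}$ is basis-independent (Lemma \ref{measure-well-defined}), taking $\varphi\equiv 1$ in Theorem \ref{local-equidistribution-theorem-prototype} shows $\sum_f\omega_{f,N,k}$ tends to the (positive) total mass of the limiting measure, so it suffices to evaluate $\lim\sum_f\omega_{f,N,k}D(\pi_f;\Phi)$. Because $\widehat{\Phi}$ is compactly supported only $p^m\leq C_{k,N}^{\alpha}$ contribute, so one may exchange the sums; applying Theorem \ref{local-equidistribution-theorem-quantitative} to the $W$-invariant Laurent polynomial $(a,b)\mapsto\sum_i\beta_i^m$ on $Y_p$ gives $\sum_f\omega_{f,N,k}\lambda_f(p^m)=\big(\sum_f\omega_{f,N,k}\big)\int_{Y_p}(\sum_i\beta_i^m)\,d\mu_p+\mathrm{Err}(p^m;k,N)$. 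Using the explicit shape of the Furusawa--Shalika measure $\mu_p$ (computed in \S\ref{the-equidistribution-problem}) one finds $\int_{Y_p}(\sum_i\beta_i)\,d\mu_p=O(p^{-1})$ and $\int_{Y_p}(\sum_i\beta_i^2)\,d\mu_p=1+O(p^{-1})$, while for $m\geq 3$ the integrand is $O(1)$ by temperedness. Hence the $m=1$ terms contribute $\ll(\log C_{k,N})^{-1}\sum_p(\log p)p^{-3/2}=o(1)$, the $m\geq 3$ terms contribute $\ll(\log C_{k,N})^{-1}\sum_p(\log p)p^{-3/2}=o(1)$, and, by the prime number theorem in the form $\tfrac{1}{\log C}\sum_p(\log p)p^{-1}g(\tfrac{\log p}{\log C})\to\int_0^{\infty}g$, the $m=2$ terms contribute
\[-2\int_0^{\infty}\widehat{\Phi}(2t)\,dt+o(1)=-\tfrac12\int_{\mathbb R}\widehat{\Phi}(u)\,du+o(1)=-\tfrac12\Phi(0)+o(1).\]
Together with Step 1, $\lim D(N,k;\Phi)=\widehat{\Phi}(0)-\tfrac12\Phi(0)=\widehat{\Phi}(0)-\tfrac12\int_{-1}^{1}\widehat{\Phi}$ (valid as $\mathrm{supp}\,\widehat{\Phi}\subset[-\alpha,\alpha]\subset(-1,1)$), which is exactly $\int_{\mathbb R}\Phi(x)W(\Sp)(x)\,dx$ since $\widehat{W(\Sp)}=\delta_0-\tfrac12\mathbf{1}_{[-1,1]}$.

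\textbf{Step 3: the error analysis and the main obstacle.} It remains to show $\sum_{m\geq 1}\sum_{p^m\leq C_{k,N}^{\alpha}}\frac{\log p}{p^{m/2}}\abs{\mathrm{Err}(p^m;k,N)}=o(\log C_{k,N})$. Each $\mathrm{Err}(p^m;k,N)$ is controlled by the error term of Theorem \ref{local-equidistribution-theorem-quantitative}, which, with its explicit dependence on the prime, decays like a fixed negative power of the family size (itself $\asymp$ a fixed power of $C_{k,N}$) against a fixed positive power of $p$ coming from the trace bounds for Fourier coefficients of \S\ref{sctn:fourier-coeff-bound}; the constraint $\alpha<2/9$ is precisely what makes the sum convergent (the ranges $m\geq 2$ being easier thanks to the extra $p^{-m/2}$). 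I expect the crux to be twofold. First, the admissible range of $\alpha$ is dictated entirely by the quality of the Fourier-coefficient bounds of \S\ref{sctn:fourier-coeff-bound} (hence of Theorem \ref{local-equidistribution-theorem-quantitative}), so the whole argument is only as strong as those estimates. Second, the Saito--Kurokawa forms must be removed by hand: their spin $L$-functions have poles, contributing extra terms to the explicit formula, and their non-tempered Satake parameters make $\abs{\lambda_f(p^m)}$ as large as $\approx 2p^{m/2}$, so that an individual such prime sum has size $\ll C_{k,N}^{\alpha}$; this is absorbed using that the Saito--Kurokawa locus is a thin subfamily whose total weight $\sum_{f\in\SK}\omega_{f,N,k}$ is smaller than that of the full family by a fixed negative power of $k$ and $N$ (from the dimension count for such lifts together with the definition of $\omega_{f,N,k}$), which outweighs $C_{k,N}^{\alpha}$ with room to spare once $\alpha<2/9$.
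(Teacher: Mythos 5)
Your overall architecture (explicit formula, average against the weights, evaluate prime-power sums via Theorem \ref{local-equidistribution-theorem-quantitative}, isolate a thin Saito--Kurokawa locus at ramified primes) matches the paper. But the numerical heart of your Step~2 is wrong, and the two errors cancel only by coincidence; as written the argument does not explain where the symplectic symmetry comes from.

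First, you assert $\int_{Y_p}\big(\sum_i\beta_i\big)\,d\mu_p=O(p^{-1})$. This is false. Since $c(\pi,p)=\sigma(a,b)=U_p^{1,0}(a,b)+\lambda_p p^{-1/2}$ and $\int_{Y_p}U_p^{1,0}\,d\mu_p=0$ (Proposition \ref{plancherel-integral-U_p}), one has $\int_{Y_p}\sigma\,d\mu_p=\lambda_p p^{-1/2}$, which is $\asymp p^{-1/2}$ and \emph{not} $O(p^{-1})$. Consequently the $m=1$ term is not negligible: after the $p^{-1/2}$ from the explicit formula it contributes $\frac{2}{\log C}\sum_p\frac{\lambda_p\log p}{p}\widehat\Phi(\tfrac{\log p}{\log C})\to\Phi(0)$, using that $\lambda_p$ (for $d=4$, $\Lambda=\mathbf 1$) has average $1$ by Chebotarev plus the prime number theorem. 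This nonzero $m=1$ contribution is exactly the effect of the arithmetic weight $\omega_{f,N,k}$ shifting the measure away from the pure Plancherel/Sato--Tate limit, and it is the \emph{source} of the symplectic symmetry in Theorem \ref{thm:low-lying-zeros}.

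Second, you assert $\int_{Y_p}\big(\sum_i\beta_i^2\big)\,d\mu_p=1+O(p^{-1})$, with the main term $+1$. The sign is wrong. For the Sato--Tate limit of $\mu_p$ (which $\mu_p$ approaches as $p\to\infty$), $\sum_i\beta_i^2=\tr(g^2)$ and $\int_{\USp(4)}\tr(g^2)\,dg$ equals the Frobenius--Schur indicator of the standard representation of $\USp(4)$, which is $-1$; the paper's computation via $U_p^{2,0}$, $U_p^{1,0}$, $U_p^{0,1}$ confirms the averaged main term is $-1$. With the correct signs, the prime sum in the explicit formula receives $\Phi(0)$ from $m=1$ and $-\tfrac12\Phi(0)$ from $m=2$, for a total of $\tfrac12\Phi(0)$, which subtracted from $\widehat\Phi(0)$ gives the symplectic kernel. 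Your two sign/magnitude errors ($0$ instead of $\Phi(0)$ at $m=1$, $+\tfrac12\Phi(0)$ instead of $-\tfrac12\Phi(0)$ at $m=2$) happen to sum to the same $\tfrac12\Phi(0)$, so you land on the right final answer, but the mechanism you describe (vanishing $m=1$, positive $m=2$ main term) is not what happens, and indeed if it were then the constant-weight family would not exhibit orthogonal symmetry as Shin--Templier show. Any correct proof must make the non-Plancherel shift of $\mu_p$ by $\lambda_p/\sqrt p$ do the work at $m=1$, as the paper does.

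There is also a smaller imprecision in your Step~3: the obstruction from Saito--Kurokawa forms at ramified primes is not that their local roots are ``as large as $2p^{m/2}$'' across the family (at unramified primes they are already handled by Theorem \ref{local-equidistribution-theorem-quantitative}); it is specifically that at $p\mid N$ their local parameters can have absolute value $p^{1/2}$, violating the assumed bound (\ref{eqn:bound-towards-ramanujan}), and this is what forces the decomposition into $\mathcal P$ and its complement with the weight bound of Corollary \ref{crlry:bounding-weights-on-problem-cases}. That is roughly what you say, but the mechanism should be stated at the ramified places only.
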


\noindent The proof of Theorem \ref{thm:low-lying-zeros} is a fairly standard exercise, combining Theorem \ref{local-equidistribution-theorem-prototype} and explicit formulas for $L$-functions.  The first thing to notice about the result is that there is no restriction to newforms, so representations are counted with multiplicity, as in \cite{ShinTemplier2013} (this means that we must take our conductor $C_{k, N}$ to be a log-average one).  Once again we see the effect of the weight $\omega_{f, N, k}$, as \cite{ShinTemplier2013} (Theorem 1.5/11.5) shows that these low lying zeros with constant weight exhibit even orthogonal symmetry (in the weight or level aspect).\\

\noindent Another noteworthy feature of Theorem \ref{thm:low-lying-zeros} is the contribution of Saito--Kurokawa lifts at ramified primes, which does not appear in the work of \cite{KowalskiSahaTsimerman2012} (where there are no ramified primes) or \cite{ShinTemplier2013} (where transfer to $\GL_4$ is assumed, and thus the Saito--Kurokawa forms are not present because their transfer to $\GL_4$ is not cuspidal).  The point is that Saito--Kurokawa lifts do not satisfy the Ramanujan conjecture.  At unramified primes their contribution is handled already in Theorem \ref{local-equidistribution-theorem-prototype}, but at ramified primes we must show that their contribution in the explicit formula calculation can be neglected.  In order to get a handle on these exceptional cases we restrict to square-free level.  After doing so we prove that a cusp form which violates the Ramanujan conjecture at a single ramified prime gives rise to a vector in the same representation as that of a classical Saito--Kurokawa lift.  It is well-known that Saito--Kurokawa lifts are few amongst all Siegel cusp forms, but we require a quantitative estimate of how few they are when counted with the weight $\omega_{f, N, k}$.  To achieve this we combine classical and representation theoretic methods to show that the Saito--Kurokawa contribution can be neglected as desired.  We remark that, as suggested by the previous paragraph, our treatment of Saito--Kurokawa lifts is not restricted to newforms (as is often the case in the literature).    \\ 

\noindent  The layout of the paper is as follows:  after collecting notations in \S\ref{notation} we recall the adelization of Siegel modular forms and discuss the cuspidal automorphic representations attached to Siegel modular forms of weight $k$ and level $N$.  This discussion follows \cite{Saha2013}, which we refer to for proofs.  With enough notation in place we are then able in \S\ref{the-equidistribution-problem} to set up the equidistribution problem precisely and state the main result (Theorem \ref{local-equidistribution-theorem-qualitive}).  The first main tool to prove this is the theorem of Sugano (Theorem \ref{sugano-formula}) which explicitly relates the values of certain continuous functions on the space $Y_p$, evaluated at the Satake parameters of an unramified local representation $\pi_p$, to the values of the spherical vector in the local Bessel model of $\pi_p$.  In the case when these local components come from the representation attached to a Siegel cusp form $f$, a computation with the global Bessel model shows that these values of the spherical vector are in turn related to a certain sum of Fourier coefficients of the form $f$.  Then \S\ref{bessel-models} is devoted to explaining these two ingredients.  The other main tool is the subject of \S\ref{sctn:fourier-coeff-bound}, in which we bound the sum of Fourier coefficients we obtain in \S\ref{bessel-models} by bounding the Fourier coefficients of Poincar\'{e} series of weight $k$ and level $N$.  The quantitative estimates obtained therein are the key to obtaining a quantitative version of the local equidistribution statement, and the main new ingredient here is that we obtain an estimate that decreases with respect to \textit{both} the weight and level simultaneously.  We then combine the result of this technical computation with the theory of \S\ref{bessel-models} to obtain both the qualitative and quantitative versions of the main result in \S\ref{main-theorem}.  In the final sections \S\ref{sctn:background-on-l-functions}-\S\ref{sctn:low-lying-zeros} we treat Theorem \ref{thm:low-lying-zeros}.  First we describe the relevant L-function theory in \S\ref{sctn:background-on-l-functions} and the set-up for the low-lying zeros in more detail.  In \S\ref{sctn:saito-kurokawa-lifts} handles the Saito--Kurokawa contribution at ramified primes, assuming squarefree level, as described above.  The proof of Theorem \ref{thm:low-lying-zeros} occupies \S\ref{sctn:low-lying-zeros}.\\

\noindent \textbf{Acknowledgements.}  The author would like to thank Abhishek Saha for suggesting this topic and for his help with various technical questions.  We also thank the referee for their comments and careful reading of the manuscript, and for suggesting a better proof of Lemma \ref{measure-well-defined}.

\section{Notation}\label{notation}

The algebraic group $\GSp_4$ is defined as
\[\GSp_4 = \{g \in \GL_4;\: {}^t g J g = \lambda(g) J\text{ for some }\lambda(g) \in \GL_1\},\]
where
\[J = \left(\begin{matrix}  &  -1_2 \\  1_2 & \end{matrix}\right).\]
Throughout we write $G = \GSp_4$.  If $R$ is a subring of $\mathbb{R}$, we let $G^+(R)$ be the subgroup of $G(R)$ consisting of those $g$ with $\lambda(g) > 0$.  $\lambda : G \to \GL_1$ is a homomorphism, and the kernel is by definition $\Sp_4$.  The centre $Z_G$ of $G$ consists of the scalar matrices in $G$.  We often write an element of $G$ in block matrix form as $\left(\begin{smallmatrix} A & B \\ C & D \end{smallmatrix}\right)$.  \\

\noindent  For a ring $R$, $R^{n \times n}$ denotes the set of $n \times n$ matrices over $R$, and $R_{\text{sym}}^{n \times n}$ the subset of symmetric ones.  We say a matrix $S = (s_{ij}) \in \mathbb{Q}_{\text{sym}}^{n \times n}$ is semi-integral if $s_{ij} \in \frac{1}{2}\mathbb{Z}$ for all $i, j$ and $s_{ii} \in \mathbb{Z}$ for all $i$.  \\

\noindent We say a subgroup $\Gamma$ of $\Sp_4(\mathbb{Q})$ is a congruence subgroup if there exists an integer $N$ such that $\Gamma$ contains
\[\Gamma(N) = \left\{g \in \Sp_4(\mathbb{Z});\:g \equiv 1_4 \bmod N\right\}\]
as a subgroup of \textit{finite index}.  \\

\noindent Let $\mathbb{H}_2 = \{Z \in \mathbb{C}_{\text{sym}}^{2 \times 2};\: \Imm(Z)>0\}$ be the Siegel upper half space of degree $2$.  There is an action of $G^+(\mathbb{R})$ on $\mathbb{H}_2$, namely 
\[\left(\gamma, Z\right) \mapsto \gamma \langle Z\rangle = (AZ + B)(CZ+D)^{-1},\]
where $\gamma = \left(\begin{smallmatrix} A & B \\ C & D \end{smallmatrix}\right) \in G^+(\mathbb{R})$.  Let $\Gamma \subset \Sp_4(\mathbb{Q})$ be a congruence subgroup.  For $k$ a positive integer, $\mathcal{S}_k(\Gamma)$ denotes the space of Siegel cusp forms of degree $2$ and weight $k$ for $\Gamma$; that is, the space of holomorphic functions $f$ on $\mathbb{H}_2$ such that
\[f(\gamma \langle Z\rangle ) = j(\gamma, Z)^{k} f(Z)\]
for all $\gamma \in \Gamma$ (where the automorphy factor is given by $j\left(\left(\begin{smallmatrix} A & B \\ C & D \end{smallmatrix}\right), Z\right) = \det(CZ + D)$) as well as vanishing at all cusps (equivalently satisfying a \textit{moderate growth} condition).  We will be particularly interested in the case when $\Gamma = \Gamma_0(N)$, where $N$ is a positive integer, which is the congruence subgroup consisting of those matrices $\left(\begin{smallmatrix} A & B \\ C & D \end{smallmatrix}\right) \in \Sp_4(\mathbb{Z})$ such that $C \equiv 0 \bmod N$.  \\

\noindent We write $\mathcal{S}_k = \bigcup_{\Gamma} \mathcal{S}_k(\Gamma)$, where the union is over all congruence subgroups.  Equivalently, $\mathcal{S}_k = \bigcup_{N \geq 1} \mathcal{S}_k(\Gamma(N))$.\\

\noindent An element $f \in \mathcal{S}_k(\Gamma)$ possesses a Fourier expansion of the form
\[f(Z) = \sum_{T} a(T; f) e(\tr(TZ))\]
where $e(z) = e^{2 \pi i z}$, and the matrices $T$ are positive semi-definite elements of $\mathbb{Q}^{2 \times 2}_{\text{sym}}$ constrained to lie in some lattice, which depends on $\Gamma$.  In the case of $\Gamma = \Gamma_0(N)$ this lattice is simply the lattice of semi-integral matrices in $\mathbb{Q}^{2 \times 2}_{\text{sym}}$. \\

\noindent For $f, g \in \mathcal{S}_k(\Gamma)$ we define the Petersson inner product
\begin{equation}\label{petersson-definition} \langle f, g \rangle = \frac{1}{\vol(\Gamma \backslash \mathbb{H}_2)} \int_{\Gamma \backslash \mathbb{H}_2} f(Z) \overline{g(Z)} \det(Y)^{k-3} dX dY,\end{equation}
where $Z = X + iY$.  Note that if $\Gamma'$ is a subgroup of $\Gamma$ then $f, g \in \mathcal{S}_k(\Gamma')$ as well, so the modularity group $\Gamma$ is not uniquely determined.  With our normalization, however, the inner product $\langle f, g\rangle$ is independent of the choice of $\Gamma$.  \\ 

\noindent If $S$ is a finite set of primes and $N$ is a positive integer, we write $\gcd(N, S)=1$ to mean that no prime factor of $N$ lies in $S$.

\section{The representation attached to a Siegel modular form}\label{reps-general}

\textbf{Adelization.}  We begin by recalling the construction of the adelization of a Siegel cusp form.  Our setup follows that of \cite{Saha2013}, which we refer to for proofs.  Let $f \in \mathcal{S}_k$, so there is a positive integer $N$ such that $f \in \mathcal{S}_k(\Gamma(N))$.  Define, for each prime $p$, a compact open subgroup $K_p^N$ of $G(\mathbb{Z}_p)$ by
\begin{equation}\label{open-compact-def} K_p^N = \left\{g \in G(\mathbb{Z}_p);\: g \equiv \left(\begin{matrix} 1_2 & \\ & a1_2 \end{matrix}\right) \bmod N\mathbb{Z}_p, a \in \mathbb{Z}_p^{\times} \right\}.\end{equation}
Note that $K_p^N = G(\mathbb{Z}_p)$ for all primes $p \nmid N$, and that the multiplier map $\lambda : G(\mathbb{Z}_p) \to \mathbb{Z}_p^\times$ is surjective for every prime $p$.  Thus strong approximation applies, hence
\[G(\mathbb{A}) = G(\mathbb{Q}) G^+(\mathbb{R}) \prod_{p < \infty} K_p^N.\]
Define the adelization $\Phi_f$ of $f$ by
\[\Phi_f(g_\mathbb{Q} g_\infty h) = \lambda(g_\infty)^{k} j(g_\infty, i1_2)^{-k} f(g_\infty\langle i1_2\rangle)\]
where $g_\mathbb{Q} \in G(\mathbb{Q})$, $g_\infty \in G^+(\mathbb{R})$, $h \in \prod_{p<\infty} K_p^N$.  Since $G(\mathbb{Q}) \cap G^+(\mathbb{R})\prod_p K_p^N = \Gamma(N)$, the modularity of $f$ implies that $\Phi_f$ is well-defined.  Moreover, one can easily check that $\Phi_f$ is independent of the choice of $N$ made in its construction.  \\

\noindent  The map $f \mapsto \Phi_f$ injectively assigns to each degree $2$ Siegel modular form a function on $G(\mathbb{A})$.  Immediately from the definition it is clear that $\Phi_f(g_\mathbb{Q} g) = \Phi_f(g)$ for all $g_\mathbb{Q} \in G(\mathbb{Q})$, $g \in G(\mathbb{A})$.  Also $\abs{\Phi_f(g)}^2$ is invariant under the centre $Z_G(\mathbb{A})$, and so we can form the following integral, which will in fact be finite by the moderate growth of $f$:
\[\int_{Z_G(\mathbb{A})G(\mathbb{Q}) \backslash G(\mathbb{A})} \abs{\Phi_f(g)}^2 dg < \infty.\]
For the remaining constraints on the image $V_k \subset L^2(G(\mathbb{Q}) \backslash G(\mathbb{A}))$ (as usual we mean square-integrable modulo $Z(\mathbb{A})$) we refer to \cite{Saha2013}; and in particular Theorem 1 which shows that the map $\mathcal{S}_k \to V_k$ is an isometry of vector spaces, the inner product on $\mathcal{S}_k$ being (\ref{petersson-definition}). \\

\noindent \textbf{Hecke operators}:  Fix a prime $p$.  We say that $f \in \mathcal{S}_k$ is $p$-spherical if there exists $N$ such that $p \nmid N$ and $f \in \mathcal{S}_k(\Gamma(N))$.  For any $N$ with $p \nmid N$ we define $\mathcal{H}_{p, N}$, the (classical) local Hecke algebra at $p$, to be the ring of $\mathbb{Z}$-linear combinations of double cosets $\Gamma(N)M\Gamma(N)$ where
\[M \in \Delta_{p, N} = \left\{g \in G^{+}(\mathbb{Z}[p^{-1}]);\:g \equiv \left(\begin{matrix} 1_2 &  \\  & \lambda(g)1_2 \end{matrix}\right) \bmod N\right\}.\]
The multiplication is defined in the usual manner for a Hecke algebra.  We will abbreviate $\mathcal{H}_p := \mathcal{H}_{p, 1}$.

\begin{lemma}\label{classical-hecke-algebras}  The ring $\mathcal{H}_{p}$ is commutative and genereated by 
\[\begin{aligned} T(p) &= \Gamma(1)\left(\begin{matrix} 1_2 &  \\ & p1_2 \end{matrix}\right)\Gamma(1), \\
T_1(p^2) &= \Gamma(1)\left(\begin{matrix} 1 &  &  &  \\  & p &  &  \\  &  & p^2 &  \\  &  &  & p \end{matrix}\right)\Gamma(1).\end{aligned}\]
Moreover, for any $p \nmid N$, the natural map $\iota_{p, N} : \mathcal{H}_{p, N} \to \mathcal{H}_{p}$ defined by $\Gamma(N) M \Gamma(N) \mapsto \Gamma(1) M \Gamma(1)$ is an isomorphism.
\end{lemma}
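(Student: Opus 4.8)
The plan is to recall the classical theory of Hecke operators for $\Sp_4$ and reduce the statement to standard facts about the structure of the Hecke algebra of the symplectic similitude group at a prime. First I would establish commutativity: the classical way to do this is to exhibit an anti-involution on $\Delta_{p,1}$ (for instance $g \mapsto \lambda(g) g^{-1}$, which maps $\Delta_{p,1}$ to itself since it preserves the condition of being in $G^+(\mathbb{Z}[p^{-1}])$) that fixes every double coset $\Gamma(1)M\Gamma(1)$, and then invoke the general principle that a Hecke algebra admitting such an anti-involution is commutative. One checks that $g$ and $\lambda(g)g^{-1}$ lie in the same double coset using the elementary divisor (Smith normal form) description of $\Gamma(1)\backslash\Delta_{p,1}/\Gamma(1)$: a double coset is represented by a diagonal matrix $\diag(p^{a_1}, p^{a_2}, p^{b_2}, p^{b_1})$ with $a_i + b_i$ independent of $i$ (the $\GSp_4$ constraint), and such a diagonal matrix is visibly conjugate by a Weyl-group element of $\Sp_4(\mathbb{Z})$ to its image under $g \mapsto \lambda(g)g^{-1}$.

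Next I would identify the generators. The elementary divisor description shows that $\Gamma(1)\backslash\Delta_{p,1}/\Gamma(1)$ is parametrized by tuples $(a_1,a_2,b_2,b_1)$ with $0 \le a_2 \le a_1$, $b_i = m - a_i$ for the common multiplier exponent $m$, together with a positivity constraint; the double cosets of "degree one" (multiplier $p$) and the relevant "degree two" generators are exactly $T(p)$ and $T_1(p^2)$ as written, and a standard induction on the multiplier exponent $m$ (or equivalently on the total degree $\sum a_i$) expresses an arbitrary double coset as a polynomial in these two. This is the classical computation of Andrianov; I would either cite it or sketch the induction, noting that the products $T(p)^2$, $T(p)T_1(p^2)$, etc., hit all double cosets of the next degree modulo lower-degree ones, so that $\mathcal{H}_p = \mathbb{Z}[T(p), T_1(p^2)]$.

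Finally, for the isomorphism $\iota_{p,N}$, the key input is that $p \nmid N$, so the reduction-mod-$N$ map realizes $\Gamma(1)$ as an extension of $\Gamma(N)$ with the same image on the relevant local data; concretely, both $\Gamma(N)M\Gamma(N)$ and $\Gamma(1)M\Gamma(1)$ are governed by the same elementary divisors, and since $M \in \Delta_{p,N}$ already lies in $\Delta_{p,1}$ the map is well-defined on generators. Injectivity follows because distinct elementary divisor tuples give distinct double cosets at both levels, and surjectivity because every generating double coset at level $1$ has a representative (the diagonal one) already satisfying the congruence defining $\Delta_{p,N}$. One must check that $\iota_{p,N}$ respects multiplication, i.e. that the structure constants agree; this reduces to comparing coset decompositions $\Gamma(N)M\Gamma(N) = \bigsqcup \Gamma(N)M_i$ with $\Gamma(1)M\Gamma(1) = \bigsqcup \Gamma(1)M_i$, and here again $p \nmid N$ guarantees that a set of coset representatives at level $1$ can be chosen to be congruent to $1_4 \bmod N$ (using strong approximation, or the surjectivity of $\Sp_4(\mathbb{Z}) \to \Sp_4(\mathbb{Z}/N)$ with kernel $\Gamma(N)$), so the decompositions match term by term.

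I expect the main obstacle to be the bookkeeping in the last step: verifying that $\iota_{p,N}$ is a ring homomorphism (not merely a bijection of bases) requires care in choosing coset representatives compatibly at the two levels, and the cleanest route is probably to pass through the adelic/local Hecke algebra $\mathcal{H}(G(\mathbb{Q}_p), G(\mathbb{Z}_p))$, which is manifestly independent of $N$ for $p \nmid N$, and note that both $\mathcal{H}_{p,N}$ and $\mathcal{H}_p$ map isomorphically onto it. The commutativity and the explicit generators then also follow from the Satake isomorphism with the Weyl-invariant subalgebra mentioned in the introduction, which is arguably the slickest packaging of the whole lemma.
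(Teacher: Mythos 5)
The paper's ``proof'' here is simply a citation to Andrianov--Zhuravlev (Lemma~3.3, Theorem~3.7, Theorem~3.23), so there is no written argument in the paper to compare against; what you have done is reconstruct the content of that reference. Your reconstruction is sound on all three points. The Gelfand anti-involution trick with $g \mapsto \lambda(g)g^{-1}$ is exactly the standard commutativity argument (note $\lambda(g)g^{-1} = J^{-1}\,{}^tg\,J$, which is visibly integral and lies in the same double coset by the elementary-divisor representative). The elementary-divisor parametrization and the induction on the multiplier exponent is the Andrianov--Zhuravlev argument for the generation by $T(p)$ and $T_1(p^2)$. And for $\iota_{p,N}$, strong approximation (i.e.\ surjectivity of $\Sp_4(\mathbb{Z}) \to \Sp_4(\mathbb{Z}/N)$ with kernel $\Gamma(N)$) is indeed what makes the coset decompositions at levels $1$ and $N$ match term by term, which is the nontrivial point you correctly flag. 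Your closing remark that the slickest packaging is to identify both $\mathcal{H}_{p,N}$ and $\mathcal{H}_p$ with the local spherical Hecke algebra $\mathcal{H}(G(\mathbb{Q}_p), G(\mathbb{Z}_p))$ is also right, and is essentially what the paper does later in \S3 when passing to $\mathfrak{h}_p$. One cosmetic slip: the diagonal representative should read $\diag(p^{a_1}, p^{a_2}, p^{b_1}, p^{b_2})$ with $a_i + b_i = m$ so that the $(1,3)$ and $(2,4)$ pairs multiply to the similitude, matching the shape of $T_1(p^2) = \diag(1,p,p^2,p)$; you wrote the last two exponents in the opposite order, but this does not affect the argument.
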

\begin{proof}  See \cite{AndrianovZhuravlev1995} Lemma 3.3, Theorem 3.7, and Theorem 3.23.  \end{proof}  

\begin{remark}\label{hecke-for-different-congruence-subgroups}  The isomorphism of Lemma \ref{classical-hecke-algebras} also holds with the modified Hecke algebra $\widetilde{\mathcal{H}}_{p, N}$, defined to be the Hecke algebra generated by $\Gamma_0(N)M\Gamma_0(N)$ with $M$ lying in the $\Gamma_0(N)$-analogue of $\Delta_{p, N}$.  This follows since $\Gamma_0(N)$ satisfies the ``$q$-symmetry condition'' of \cite{AndrianovZhuravlev1995} -- see Lemma 3.5 there.  We thus identify $\widetilde{\mathcal{H}}_{p, N}$ with $\mathcal{H}_{p, N}$ (when $p \nmid N$).  In particular we will use the notation $T(p)$ and $T_1(p^2)$ for the standard Hecke operators on $\mathcal{S}_k(N)$.  \end{remark}

\noindent  Let $f \in \mathcal{S}_k$ be $p$-spherical, say $f \in \mathcal{S}_k(\Gamma(N))$ with $p \nmid N$.  The action of $T = \Gamma(1)M\Gamma(1) \in \mathcal{H}_{p}$ is defined as follows:  write $\iota_{p, N}^{-1}(\Gamma(1) M \Gamma(1)) = \bigsqcup_i \Gamma(N)M_i$, and set
\[(f_k|T)(Z) = \sum_{i} \lambda(M_i)^{k}j(M_i, Z)^{-k}	f(M_i \langle Z\rangle).\]
This extends by linearity to a right-action of $\mathcal{H}_{p}$ on the set of $p$-spherical elements of $\mathcal{S}_k$.  The composition of operators agrees with the multiplication in $\mathcal{H}_{p}$.  Thus by Lemma \ref{classical-hecke-algebras} the action is commutative, and determined by the action of $T(p)$ and $T_1(p^2)$. \\

\noindent  Continue with $p$ a fixed prime, the adelic counterpart to $\mathcal{H}_p$ is $\mathfrak{h}_p$, the set of locally constant compactly supported functions on $G(\mathbb{Q}_p)$ which are both left and right invariant by $G(\mathbb{Z}_p)$, equipped with convolution product.  It acts on $p$-spherical elements $\Phi \in V_k$, that is those $\Phi \in V_k$ such that $\Phi(gh) = \Phi(g)$ for all $h \in G(\mathbb{Z}_p)$ (so that by definition $\Phi \in V_k$ is $p$-spherical for almost all $p$).  There is a canonical map $\mathcal{H}_p \to \mathfrak{h}_p$ defined to be the $\mathbb{Z}$-linear extension of the map $\Gamma(1)M\Gamma(1) \mapsto \mathbf{1}_{G(\mathbb{Z}_p) M G(\mathbb{Z}_p)}$.  Denoting the image of an arbitrary element $T \in \mathcal{H}$ by $\widetilde{T}$, the map $T \mapsto \widetilde{T}$ is an isomorphism of rings $\mathcal{H}_p \otimes \mathbb{C} \to \mathfrak{h}_p$.  Furthermore, the map $f \mapsto \Phi_f$ restricts to a map between $p$-spherical elements, and is Hecke equivariant in the sense that
\[\Phi_{f | T} = \widetilde{T} \Phi_f\]
for any $p$-spherical $f \in \mathcal{S}_k$ and $T \in \mathcal{H}_p$.  Again we refer to \cite{Saha2013} for a proof of these facts. \\

\noindent \textbf{The automorphic representation corresponding to $f$}:  Let $f \in \mathcal{S}_k$, and let $\Phi_f \in V_k$ be its adelization.  Letting $G(\mathbb{A})$ act on $\Phi_f$ by the right regular action $\Phi_f(g) \mapsto \Phi_f(gh)$ for $h \in G(\mathbb{A})$ we generate a cuspidal automorphic representation $\pi_f$ of $G(\mathbb{A})$.  As usual this decomposes as a direct sum of finitely many irreducible cuspidal automorphic representations of $G(\mathbb{A})$, say 
\begin{equation}\label{decompose-pi-f-into-irreps} \pi_f = \bigoplus_{i=1}^m \pi_f^{(i)}\end{equation} 
with each $(\pi_f^{(i)}, V^{(i)})$ irreducible.\begin{footnote}{Such a decomposition need not be unique, since an irreducible constituent may occur with multiplicity greater than one.  However, it is expected that weak multiplicity one holds, which would rule this possibility out.  We do not need to assume anything about the uniqueness of this decomposition, since the local components we are interested in will always turn out to be isomorphic.}  \end{footnote}  Let $\pi = \pi_f^{(i)}$ be any irreducible constituent of $\pi_f$.  By the tensor product theorem there exist irreducible, unitary, admissible representations $\pi_v$ of $G(\mathbb{Q}_v)$ (one for each place $v$ of $\mathbb{Q}$) such that
\begin{equation}\label{tensor-product-isomorphism} \pi \simeq \otimes'_v \pi_v,\end{equation}
where the prime denotes a restricted tensor product, and for almost all $v$ the local representation $\pi_v$ is spherical.  Since $f \in \mathcal{S}_k$ the archimedean component $\pi_\infty$ is a certain lowest weight representation as described in \cite{AsgariSchmidt2001}.  Similarly, the following proposition describes $\pi_p$ when $f$ is an eigenfunction for $\mathcal{H}_{p}$:

\begin{proposition} \label{hecke-evals-determine-spsr}  Let $p$ be a prime and suppose $f \in \mathcal{S}_k$ is $p$-spherical.  Assume moreover that $f$ is an eigenfunction for the Hecke operators $T(p)$ and $T_1(p^2)$ (and hence, by Lemma \ref{classical-hecke-algebras}, an eigenfunction for $\mathcal{H}_{p}$), with corresponding eigenvalues $\lambda(p)$ and $\lambda_1(p^2)$.  Then, for any irreducible constituent $\pi$ of $\pi_f$, the local component $\pi_p$ in any isomorphism of the form (\ref{tensor-product-isomorphism}) is a spherical principal series representation\begin{footnote}{We will recall the construction of these representations in \S\ref{the-equidistribution-problem}.}\end{footnote} of $G(\mathbb{Q}_p)$ whose isomorphism class is determined uniquely by $\lambda(p)$ and $\lambda_1(p^2)$. \end{proposition}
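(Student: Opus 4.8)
The plan is to combine the Hecke-equivariance of the adelization recorded above with the Satake classification of unramified representations of $G(\mathbb{Q}_p)$.

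First I would verify that, for \emph{every} irreducible constituent $\pi = \pi_f^{(i)}$ in a decomposition (\ref{decompose-pi-f-into-irreps}), the component $\pi_p$ is unramified. Fix such a decomposition and write $\Phi_f = \sum_{i=1}^m \Phi_f^{(i)}$ with $\Phi_f^{(i)} \in V^{(i)}$. Each projection $\Phi_f \mapsto \Phi_f^{(i)}$ is $G(\mathbb{A})$-equivariant, so $\Phi_f^{(i)}$ is again fixed by $G(\mathbb{Z}_p)$ (which $\Phi_f$ is, since $f$ is $p$-spherical); moreover $\Phi_f^{(i)} \neq 0$, for otherwise $\Phi_f$ --- hence the $G(\mathbb{A})$-module it generates, which is all of $\pi_f$ --- would lie in $\bigoplus_{j\ne i} V^{(j)}$, contradicting $V^{(i)} \neq 0$. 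Since the convolution action of $\mathfrak{h}_p$ preserves every $G(\mathbb{A})$-subrepresentation and $\Phi_f$ is an $\mathfrak{h}_p$-eigenvector (by $\Phi_{f|T_k} = \widetilde{T} \Phi_f$ together with the hypothesis that $f$ is an eigenform for $T(p)$ and $T_1(p^2)$, hence for all of $\mathcal{H}_p$ by Lemma \ref{classical-hecke-algebras}), each $\Phi_f^{(i)}$ is an $\mathfrak{h}_p$-eigenvector with the same eigencharacter $\chi_f : \mathfrak{h}_p \to \mathbb{C}$. Choosing the isomorphism $\pi \simeq \otimes_v' \pi_v$ so that $\Phi_f^{(i)}$ has a $G(\mathbb{Z}_p)$-invariant $p$-component, we conclude that $\pi_p$ has a nonzero $G(\mathbb{Z}_p)$-fixed vector; being irreducible and admissible, it is unramified, i.e. the spherical principal series representation of $G(\mathbb{Q}_p)$ to be described in \S\ref{the-equidistribution-problem}.

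Next I would apply the Satake isomorphism. Because $\pi_p$ is irreducible and admissible, $\pi_p^{G(\mathbb{Z}_p)}$ is one-dimensional, so $\mathfrak{h}_p$ acts on it through a character; by the classification of unramified representations this character --- equivalently, the associated $W$-orbit of Satake parameters $(a_p(f), b_p(f))$ --- determines the isomorphism class of $\pi_p$ completely, and conversely. Since $\Phi_f^{(i)}$ spans a line in $\pi_p^{G(\mathbb{Z}_p)}$ on which $\mathfrak{h}_p$ acts by $\chi_f$, that character is $\chi_f$. Finally $\chi_f$ is itself determined by $\lambda(p)$ and $\lambda_1(p^2)$: passing from the weight-$k$ action $f|T_k$ to the convolution action of $\mathfrak{h}_p$ and then to the Satake parameters is an explicit invertible change of variables introducing only fixed powers of $p$ (worked out in \cite{AndrianovZhuravlev1995} and \cite{Saha2013}). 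Hence the isomorphism class of $\pi_p$ is determined by $\lambda(p)$ and $\lambda_1(p^2)$ alone; in particular it depends neither on $i$ nor on the chosen isomorphism (\ref{tensor-product-isomorphism}), so all constituents of $\pi_f$ share the same $\pi_p$ up to isomorphism.

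The substantive input is the Satake classification; the remainder is bookkeeping. The only place demanding genuine care is tracking the normalisation constants relating the classical operators $T(p)$, $T_1(p^2)$ to the convolution operators on $V_k$ and thence to the Satake parameters --- but since this is a fixed invertible substitution, it does not affect the assertion that $\lambda(p)$ and $\lambda_1(p^2)$ pin down the isomorphism class.
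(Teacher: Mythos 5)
The paper does not prove this in-house: it simply cites \cite{Saha2013}, Proposition 3.9. Your argument reconstructs the standard proof that citation supplies — project $\Phi_f$ onto each irreducible constituent $V^{(i)}$ to see every $\Phi_f^{(i)}$ is a nonzero $G(\mathbb{Z}_p)$-fixed $\mathfrak{h}_p$-eigenvector with a common eigencharacter $\chi_f$, deduce via the Satake isomorphism that each $\pi_p$ is the unramified representation with that character, and observe that $\chi_f$ is pinned down by $\lambda(p), \lambda_1(p^2)$ since $T(p), T_1(p^2)$ generate $\mathcal{H}_p$ and $\mathcal{H}_p \otimes \mathbb{C} \simeq \mathfrak{h}_p$ — and it is correct. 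One cosmetic remark: with the paper's normalisation of adelization, the Hecke-equivariance is exactly $\Phi_{f|T_k} = \widetilde{T}\Phi_f$, so $\chi_f(\widetilde{T(p)}) = \lambda(p)$ and $\chi_f(\widetilde{T_1(p^2)}) = \lambda_1(p^2)$ on the nose; no extra powers of $p$ intervene in the passage from classical eigenvalues to the eigencharacter of $\mathfrak{h}_p$ (the powers of $p$ you allude to only enter when one further translates $\chi_f$ into the coordinates $(a,b)$ of \S\ref{the-equidistribution-problem}).
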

\begin{proof} See \cite{Saha2013} Proposition 3.9.  \end{proof}

\begin{remark}\label{well-defined-isom-class-of-spsr}  It follows that there is a well-defined isomorphism class of local representations at $p$ (which are necessarily spherical principal series) attached to a $p$-spherical element $f \in \mathcal{S}_k$ under the assumption that $f$ is an eigenfunction of $T(p)$ and $T_1(p^2)$.  This is well-defined in the sense that it is independent of the (possible) choice of decomposition in (\ref{decompose-pi-f-into-irreps}), the choice of irreducible constituent $\pi = \pi_f^{(i)}$ from this decomposition, and the choice of isomorphism in (\ref{tensor-product-isomorphism}).  \end{remark}

\section{The equidistribution problem}\label{the-equidistribution-problem}

\noindent We now describe in detail the equidistribution problem addressed in the paper.  Fix a finite set of primes $S$, let $k$ be any even integer $\geq 6$, and let $N$ be a positive integer with $\gcd(N, S)=1$.  Let $\mathcal{S}_k(N)^*$ denote any\begin{footnote}{The definitions we make in the following appear to depend on the choice of basis.  However we will show in Lemma \ref{measure-well-defined} that this is not the case.}\end{footnote} orthogonal basis of $\mathcal{S}_k(N)$ consisting of forms that are eigenfunctions of $T(p)$ and $T_1(p^2)$ whenever $p \in S$ (there is no ambiguity in our notation for the Hecke operators -- see Remark \ref{hecke-for-different-congruence-subgroups}).  Let $f \in \mathcal{S}_k(N)^*$.  By Remark \ref{well-defined-isom-class-of-spsr} we can attach to $f$ an isomorphism class of spherical principal series representations of $G(\mathbb{Q}_p)$ for each $p \in S$.  Since $f \in \mathcal{S}_k(N)^*$ has trivial character, the central character of the corresponding representation will be trivial.  \\

\noindent  We now recall the construction of the spherical principal series representations of $G(\mathbb{Q}_p)$ with trivial central character.  Let $\chi_1, \chi_2, \sigma$ be unramified quasi-characters of $\mathbb{Q}_p^{\times}$, and define a character of the Borel subgroup
\[\left(\begin{matrix} a_1 & * & * & * \\   & a_2 & * & * \\  &  & \lambda a_1^{-1} &  \\  &  & * & \lambda a_2^{-1} \end{matrix}\right) \mapsto \chi_1(a_1) \chi_2(a_2) \sigma(\lambda).\]
We require the central character to be trivial, so $\chi_1\chi_2\sigma^2=1$.  Via normalized induction we obtain a representation of $G(\mathbb{Q}_p)$, and this has a unique spherical constituent, denoted $\chi_1 \times \chi_2 \rtimes \sigma$ as in the notation of \cite{SallyTadic1993}.  Since the quasi-characters $\chi_1, \chi_2, \sigma$ are unramified they are completely determined by their values on $p \in \mathbb{Q}_p^\times$.  Since the central character is trivial, $\chi_1 \times \chi_2 \rtimes \sigma$ is therefore determined by $(a, b) = (\sigma(p), \sigma(p)\chi_1(p)) \in \mathbb{C}^\times \times \mathbb{C}^\times$.  We refer to $(a, b)$ as the \textit{Satake parameters} of $\chi_1 \times \chi_2 \rtimes \sigma$.  By the classification in \cite{PitaleSchmidt2009}, it follows that $0 < \abs{a}, \abs{b} \leq \sqrt{p}$.  The form of the generalized Ramanujan conjecture for $\GSp_4$ proved by Weissauer (see \cite{Weissauer2009}) states that if the global representation $\pi$ is not CAP then in fact $\abs{a}=\abs{b} = 1$.  We will discuss this further in \S\ref{sctn:background-on-l-functions} and \S\ref{sctn:saito-kurokawa-lifts}. \\

\noindent  Any spherical principal series representation of $G(\mathbb{Q}_p)$ with trivial central character is isomorphic to some $\chi_1 \times \chi_2 \rtimes \sigma$.  Moreover, the representations $\chi_1 \times \chi_2 \rtimes \sigma$ and $\chi_1' \times \chi_2' \rtimes \sigma'$, with associated $(a, b)$ and $(a', b')$ respectively, are isomorphic if and only if $(a, b)$ and $(a', b')$ lie in the same orbit under the action of the Weyl group $W$ of order $8$ generated by the transformations
\begin{equation}\label{weyl-group-generators} (a, b) \mapsto (b, a), \:\:(a, b) \mapsto (a^{-1}, b),\:\:(a,b) \mapsto (a, b^{-1}).\end{equation}

\noindent  Let $X_p$ be the set of isomorphism classes of spherical principal representations of $G(\mathbb{Q}_p)$.  Let $Y_p = \{(a, b) \in \mathbb{C}^\times \times \mathbb{C}^\times;\: 0 < \abs{a}, \abs{b} \leq \sqrt{p}\}/W$.  Then we have a well-defined injection $X_p \to Y_p$.  $Y_p$ therefore provides a natural choice of co-ordinates on $X_p$.  Fix a finite set of primes $S$.  We also form the product spaces
\begin{equation}\label{XS-YS-definition} X_S = \prod_{p \in S} X_p, \:\:\:\:Y_{S} = \prod_{p \in S} Y_p.\end{equation}
We form the natural injection $X_S \to Y_S$, which allows us to view $X_S$ as a subspace of $Y_S$.  We will formulate our equidistribution problem on $Y_S$, doing so by defining two measures $\nu_{S, N, k}$ and $\mu_S$ on $Y_S$ and showing that these agree in an appropriate weak-$*$ limit.  The measure $\mu_S$ is a certain natural measure on $Y_S$.  The measure $\nu_{S, N, k}$ reflects the distribution of the spherical principal series representations attached to eigenforms in $S_k(N)$.\\

\noindent  \textbf{The measure $\nu_{S, N, k}$.}   As mentioned in the introduction, our distribution will be weighted by a certain ``arithmetic factor''; our first task is to define this.  Let $k \geq 6$ be even and $N$ a positive integer with $\gcd(N, S)=1$.  Let $d$ be a positive integer such that $-d$ is the discriminant of $\mathbb{Q}(\sqrt{-d})$.  Let $w(-d)$ denote the number of roots of unity in $\mathbb{Q}(\sqrt{-d})$.  Let $\Cl_d$ denote the ideal class group of $\mathbb{Q}(\sqrt{-d})$, and let $\Lambda$ be any character of $\Cl_d$.  Recall the isomorphism between $\Cl_d$ and the set of $\SL_2(\mathbb{Z})$ equivalence classes of primitive, semi-integral, positive definite matrices with determinant $d/4$.  We write this map from $\Cl_d$ to the set of (equivalence classes of) such matrices as $c \mapsto \mathsf{S}_c$.  Define
\[c_{k}^{d, \Lambda} = \sqrt{\pi} (4\pi)^{3-2k} \Gamma\left(k - \frac{3}{2}\right) \Gamma(k-2) \left(\frac{d}{4}\right)^{-k+\frac{3}{2}}\frac{d_\Lambda}{w(-d)\abs{\Cl_d}},\]
where
\[d_\Lambda = \begin{cases} 1 & \text{if }\Lambda^2 = 1, \\ 2 & \text{otherwise.} \end{cases}\]
Define also
\begin{equation}\label{fourier-coeff-class-gp-average} a(d, \Lambda; f) = \sum_{c \in \Cl_d} \overline{\Lambda(c)} a(\mathsf{S}_c; f),\end{equation}
which is well-defined since the Fourier coefficients $a(T; f)$ depend only on the equivalence class of $T$ modulo $\SL_2(\mathbb{Z})$-conjugation (the same is even true for $\GL_2(\mathbb{Z})$-conjugation, since $k$ is even).  The weight\begin{footnote}{When $N=1$ this is the weight used in \cite{KowalskiSahaTsimerman2012}, though one must recall that we normalize our Petersson inner products differently.}\end{footnote} we use is
\begin{equation}\label{weight-def} \omega_{f, N, k}^{d, \Lambda} = \frac{c_k^{d, \Lambda}}{ \vol(\Gamma_0(N) \backslash \mathbb{H}_2)}\frac{\abs{a(d, \Lambda; f)}^2}{\langle f, f \rangle}.\end{equation}

\noindent Recall that the Petersson inner product, defined by (\ref{petersson-definition}), is independent of the choice of congruence subgroup.  The dependence on $N$ is therefore solely via $\vol(\Gamma_0(N) \backslash \mathbb{H}_2)$, in the sense that if $f \in \mathcal{S}_k(N) \subset \mathcal{S}_k(NN_1)$, then
\[\omega_{f, NN_1, k}^{d, \Lambda} = \frac{\vol(\Gamma_0(N) \backslash \mathbb{H}_2)}{\vol(\Gamma_0(NN_1) \backslash \mathbb{H}_2)}\omega_{f, N, k}^{d, \Lambda}.\]
The asymptotics as a function of $N$ is therefore determined by the index of $\Gamma_0(N)$ inside $\Sp_4(\mathbb{Z})$, and one can easily check $\vol(\Gamma_0(N) \backslash \mathbb{H}_2) \asymp [\Gamma_0(N) : \Sp_4(\mathbb{Z})] \asymp N^3$.  The dependency on $k$ is already explicit from the form of $c_k^{d, \Lambda}$.  \\

\noindent  A more subtle point is the dependency of this weight on $f$.  In the parlance of general equidistribution problems from the introduction we have chosen the quadratic form $Q$ to be 
\[f \mapsto \frac{c_{k}^{d, \Lambda} \abs{a(d, \Lambda; f)}^2}{\vol(\Gamma_0(N) \backslash \mathbb{H}_2)}.\]  
It is believed that the term $\abs{a(d, \Lambda; f)}^2$ carries deep arithmetic information: when $f$ is an eigenform, a conjecture of B\"{o}cherer relates this quantity to the central value $L(1/2, \pi_f \times \chi_{-d})$ of the Langlands $L$-function $L(s, \pi_f \times \chi_{-d})$, where $\chi_{-d}$ is the character corresponding to the imaginary quadratic extension $\mathbb{Q}(\sqrt{-d})$.  This deep conjecture can be viewed as an analogue of Waldspurger's famous theorem in the case of elliptic modular forms.  To the best of the author's knowledge this has only been proved for certain ``lifts'' (e.g. Saito--Kurokawa and Yoshida lifts).  \\

\noindent  In our investigation of the asymptotics of this measure we will work with a fixed but arbitrary choice of $d$ and $\Lambda$.  Consequently we will often abbreviate $\omega_{f, N, k}^{d, \Lambda}$ to $\omega_{f, N, k}$.  The limiting distribution, $\mu_S$ defined below, will also depend on the choice of $d, \Lambda$.  In order to simplify notation one may wish to focus on the simplest case, which is $d=4$ and $\Lambda=\mathbf{1}$, giving the weight used in Theorem \ref{local-equidistribution-theorem-prototype}.  We will also restrict to this weight in \S8-\S10.\\

\noindent  With $d$ and $\Lambda$ fixed, now fix $S$ and form $\mathcal{S}_k(N)^*$ as we did at the beginning of this section.  To each $f \in \mathcal{S}_k(N)^*$ we have associated a tuple $\pi_S(f) = (\pi_p(f))_{p \in S}$, where each $\pi_p(f)$ is an isomorphism class of spherical principal series representations of $G(\mathbb{Q}_p)$.  We also write $\pi_S(f) \in Y_S$ for the image of this tuple under the map $X_S \hookrightarrow Y_S$.  The measure $\nu_{S, N, k}$ on $Y_S$, which is supported on (the image of) $X_S$, is defined by
\begin{equation}\label{nu-def} \nu_{S, N, k} = \sum_{f \in \mathcal{S}_k(N)^*} \omega_{f, N, k} \delta_{\pi_S(f)},\end{equation}
where $\delta$ denotes Dirac mass.\\

\noindent In a moment we will compare this with the general equidistribution set up in the introduction.  First we prove, in this generality, that the measure is independent of the choice of basis:

\begin{lemma}\label{measure-well-defined}  Let $X$ be a topological space, $V$ a finite dimensional complex inner product space, $Q$ a fixed non-negative hermitian form on $V$, and $H$ a finitely generated commutative algebra of hermitian operators acting on $V$.  Suppose that whenever $v \in V$ is an eigenvector for $H$ is has associated to it a point $a(v) \in X$ such that if $v_1, v_2$ lie in the same eigenspace then $a(v_1) = a(v_2)$.  For each $v \in V$ let $\omega(v) = Q(v)/\langle v, v \rangle$.  For each orthogonal basis $\mathcal{B}$ of $V$ consisting of eigenforms of $H$ define a measure $X$ by $\nu_\mathcal{B} = \sum_{v \in \mathcal{B}} \omega(v) \delta_{a(v)}$.  Then $\nu_\mathcal{B}$ is independent of the choice of $\mathcal{B}$.\end{lemma}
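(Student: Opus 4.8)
### Proof Proposal

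The plan is to decompose $V$ into the simultaneous eigenspaces of the commuting algebra $H$ and show that the measure, when restricted to each eigenspace, depends only on the eigenspace and not on the choice of orthogonal basis within it. Since $H$ is a finitely generated commutative algebra of hermitian (hence normal, pairwise commuting) operators, the spectral theorem gives an orthogonal decomposition $V = \bigoplus_{j} V_j$ into the joint eigenspaces $V_j$; to each $V_j$ is attached a single point $x_j := a(v) \in X$ for any nonzero eigenvector $v \in V_j$ (well-defined by the hypothesis that vectors in the same eigenspace share the same point). Now let $\mathcal{B}$ be any orthogonal basis of $V$ consisting of $H$-eigenforms. First I would observe that each $v \in \mathcal{B}$ lies in some $V_j$ (an $H$-eigenform is a joint eigenvector, hence lies in a single joint eigenspace), and that $\mathcal{B}_j := \mathcal{B} \cap V_j$ is an orthogonal basis of $V_j$ — this is because the $V_j$ are mutually orthogonal and $\mathcal{B}$ spans $V$, so a counting/dimension argument forces $\mathcal{B}_j$ to have exactly $\dim V_j$ elements and hence to be a basis of $V_j$.

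With this reduction in hand, the measure splits as
\[
\sum_{v \in \mathcal{B}} \omega(v)\,\delta_{a(v)} = \sum_j \Bigl(\sum_{v \in \mathcal{B}_j} \frac{Q(v)}{\langle v, v\rangle}\Bigr)\,\delta_{x_j},
\]
so it suffices to prove that for each $j$ the scalar $\sum_{v \in \mathcal{B}_j} Q(v)/\langle v, v\rangle$ is independent of the choice of orthogonal basis $\mathcal{B}_j$ of $V_j$. The key step is the following linear-algebra fact: if $Q$ is a non-negative hermitian form on a finite-dimensional inner product space $W$ and $\{v_1, \dots, v_r\}$ is any orthogonal basis of $W$, then $\sum_i Q(v_i)/\langle v_i, v_i\rangle$ equals the trace of the unique self-adjoint operator $A_Q$ on $W$ representing $Q$, i.e. the operator with $Q(u, w) = \langle A_Q u, w\rangle$. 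Indeed, normalizing to an orthonormal basis $e_i = v_i/\|v_i\|$ we have $Q(v_i)/\langle v_i, v_i\rangle = Q(e_i, e_i) = \langle A_Q e_i, e_i\rangle$, and summing over an orthonormal basis gives $\tr(A_Q)$, which is manifestly basis-independent.

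Assembling the pieces: the measure equals $\sum_j \tr(A_{Q, j})\,\delta_{x_j}$, where $A_{Q, j}$ is the self-adjoint operator on $V_j$ representing the restriction $Q|_{V_j}$ and $x_j$ is the point attached to the joint eigenspace $V_j$ — all of which are defined intrinsically from $V$, $Q$, $H$, and the assignment $v \mapsto a(v)$, with no reference to $\mathcal{B}$. I do not expect any serious obstacle here; the only point requiring a little care is the justification that $\mathcal{B}$ really does restrict to a basis on each joint eigenspace, which uses that the $V_j$ are orthogonal and that each $H$-eigenform of $\mathcal{B}$ is a genuine joint eigenvector (this is where one uses that $H$ is commutative, so that "eigenform of $H$" means "simultaneous eigenvector of all operators in $H$", and that a vector cannot be a joint eigenvector for two distinct systems of eigenvalues). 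Applying the lemma to $X = Y_S$, $V = \mathcal{S}_k(N)$, $Q(f) = c_k^{d,\Lambda}|a(d,\Lambda; f)|^2 / \vol(\Gamma_0(N)\backslash\mathbb{H}_2)$, $H$ the algebra generated by $\{T(p), T_1(p^2) : p \in S\}$, and $a(f) = \pi_S(f)$, one concludes that $\nu_{S, N, k}$ as defined in \eqref{nu-def} is independent of the choice of $\mathcal{S}_k(N)^*$; one should also note in passing that $Q$ is indeed a non-negative hermitian form (it is $c_k^{d,\Lambda}/\vol(\cdots)$ times the squared absolute value of a linear functional, hence a rank-one non-negative hermitian form).
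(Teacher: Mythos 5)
Your proof is correct, but it takes a genuinely different route from the paper's. The paper argues by showing that any two orthogonal eigenform bases are related by a sequence of elementary unitary operations --- (i) scaling a single vector, (ii) replacing a pair $(v_1, v_2)$ lying in a common eigenspace by a $U(2)$-combination --- and then verifies directly that the sum $\sum_v \omega(v)\delta_{a(v)}$ is invariant under (i) and (ii) by expanding $Q(\alpha v_1 + \beta v_2) + Q(\gamma v_1 + \delta v_2)$ via the sesquilinear form attached to $Q$ and using unitarity. Your argument instead decomposes $V$ into the joint eigenspaces $V_j$ of $H$, observes that any eigenform basis refines to a union of orthogonal bases of the $V_j$, and then identifies the scalar coefficient of $\delta_{x_j}$ as $\tr(A_{Q,j})$, the trace of the self-adjoint operator representing $Q|_{V_j}$, which is manifestly basis-independent. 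The trace formulation is cleaner and more conceptual: it names the invariant quantity outright rather than checking invariance under generators of $U(n)$, and it makes the reduction to fixed joint eigenspaces explicit (the paper leaves this implicit in its ``by induction on the dimension'' sketch --- in particular, the intermediate bases in the paper's sequence of moves must remain eigenform bases, which silently forces operation (ii) to be applied only to pairs within a common eigenspace, exactly the point your decomposition makes transparent). Both arguments are essentially the spectral theorem in disguise; yours just says so.
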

\begin{proof}  $V$ can be written as a direct sum of $H$-eigenspaces, different eigenspaces necessarily being orthogonal, and hence we reduce to the case when all $v \in V$ have the same $a(v)$.  Let $A$ denote the linear operator such that $Q(v) = \langle Av, v\rangle$.  Take a function $f : X \to \mathbb{C}$, then
\[\int f d\nu_\mathcal{B} = f(a) \sum_{v \in \mathcal{B}} \frac{Q(v)}{\langle v, v\rangle} = f(a) \sum_{v \in \mathcal{B}} \frac{\langle Av, v\rangle}{\langle v, v\rangle} = f(a)\tr(A),\]
which is independent of $\mathcal{B}$.  Thus $\nu_\mathcal{B}$ is independent of $\mathcal{B}$.\begin{footnote}{This proof actually shows how we can define $\nu_\mathcal{B}$ without picking a basis: namely $\nu_\mathcal{B} := \sum_E \tr(A_E) \delta_{a(E)}$ where $E$ varies over the distinct $H$-eigenspaces, $A_E$ is the operator representing $Q$ restricted to $E$, and $a(E)=a(v)$ for any $v \in E$.}\end{footnote} \end{proof}

\noindent For our present situation in the notation of the lemma we have:
\begin{myitem}
\item  the topological space $X$ is $Y_S$,
\item  the finite dimensional vector space $V$ is $\mathcal{S}_k(N)$, equipped with the Petersson inner product,
\item  the algebra of operators $H$ consists of the local Hecke algebras for $p \in S$,
\item  the point $a(f) \in X$ for $f \in V$ is the Satake parameters of the local representation, $\pi_S(f)$, 
\item  the quadratic form $Q$ is $f \mapsto c_k^{d, \Lambda} \abs{a(d, \Lambda; f)}^2 / \vol(\Gamma_0(N) \backslash \mathbb{H}_2)$.
\end{myitem}
 
\noindent \textbf{The measure $\mu_S$.}  \noindent  Our limiting measure is the measure referred to in \cite{FurusawaShalika2002} as \textit{the Plancherel measure for the local Bessel model associated to $(d, \Lambda)$}.  In \cite{KowalskiSahaTsimerman2012} is appears as the limiting measure for $\nu_{S, 1, k}$ as $k \rightarrow \infty$ over even integers.  We follow this paper for our definition now.  Let 
\[I_p = \{(a, b) \in \mathbb{C}^\times \times \mathbb{C}^\times;\: \abs{a} = \abs{b} = 1\}/W \subset Y_p,\] 
where $W$ is the Weyl group generated by (\ref{weyl-group-generators}).  We write (a representative of) the point $(a, b) \in I_p$ using the co-ordinates $(a, b) = (e^{i\theta_1}, e^{i\theta_2})$.  We define a measure $d\widetilde{\mu}_p$ on $I_p$ by 
\[d\widetilde{\mu}_p(\theta_1, \theta_2) = \frac{4}{\pi^2}(\cos(\theta_1)-\cos(\theta_2))^2 \sin^2(\theta_1) \sin^2(\theta_2) \: d\theta_1 d\theta_2.\]
Note that this is independent of the choice of representative $(e^{i\theta_1}, e^{i\theta_2})$.  This can be obtained as a pushforward of the probability Haar on $\USp_4$ (the compact form of $\Sp_4$) to $I_p$, in analogy with the construction of the classical Sato--Tate measure.  We extend $\widetilde{\mu}_p$ to a measure on $Y_p$, also denoted $\widetilde{\mu}_p$, by extending by zero.  The measure $\mu_p = \mu_{p, d, \Lambda}$ is now defined by
\[d\mu_{p} = \left(1 - \left(\frac{-d}{p}\right)\frac{1}{p}\right) \Delta_{p, d, \Lambda}^{-1} d\widetilde{\mu}_p.\]
This measure is also supported on $I_p \subset Y_p$.  The function $\Delta_{p, d, \Lambda}$ is given by
\[\Delta_{p, d, \Lambda}(\theta_1, \theta_2) = \prod_{i=1}^2 \begin{cases} \left(\left(1+\frac{1}{p}\right)^2 - \frac{4\cos^2(\theta_i)}{p}\right) & \text{if 
}p\text{ is inert in }\mathbb{Q}(\sqrt{-d}), \\
\left(\left(1 - \frac{1}{p}\right)^2 + \frac{1}{p}\left(2\cos(\theta_i)\sqrt{p} - \lambda_p\right)\left(\frac{2\cos(\theta_i)}{\sqrt{p}} - \lambda_p\right)\right) & \text{if }p\text{ is split in }\mathbb{Q}(\sqrt{-d}), \\
\left(1-\frac{2\lambda_p\cos(\theta_i)}{\sqrt{p}} + \frac{1}{p}\right) & \text{if }p\text{ is ramified in }\mathbb{Q}(\sqrt{-d}), \end{cases}\]
where $\lambda_p = \sum_{N(\mathfrak{p}) = p} \Lambda(\mathfrak{p})$ (a sum over the one or two prime ideals in $\mathbb{Q}(\sqrt{-d})$ of norm $p$).  Note that $\Delta_{p, d, \Lambda}$ is again independent of the choice of Weyl group orbit representative.  Finally, we define the measure $\mu_S = \mu_{S, d, \Lambda}$ on $X_S$ by
\begin{equation}\label{mu-def} d\mu_S = \prod_{p \in S} d\mu_{p}.\end{equation}
Although the definition is rather complicated this measure is at least completely explicit.  Along with the fact that the measure is supported on $I_p$, it is perhaps also worth noting that $d\mu_p$ tends towards the Sato--Tate measure as $p \rightarrow \infty$.  

\begin{theorem}[Local equidistribution and independence, qualitative version]\label{local-equidistribution-theorem-qualitive}  Fix any $d>0$ such that $-d$ is the discriminant of $\mathbb{Q}(\sqrt{-d})$, and let $\Lambda$ be any character of $\Cl_d$.  For any finite set of primes $S$, the measure $\nu_{S, k, N}$ converges weak-$*$ to $\mu_{S}$ as $k+N \rightarrow \infty$ with $k\geq 6$ varying over positive even integers and $N\geq 1$ varying over positive integers with $\gcd(N, S)=1$.  That is, for any continuous function $\varphi$ on $Y_S$,
\[\lim_{k+N \rightarrow \infty} \sum_{f \in \mathcal{S}_k(N)^*} \omega_{f, N, k} \:\varphi((a_p(f), b_p(f))_{p \in S}) = \int_{Y_S} \varphi \:  d\mu_{S}.\]
In particular if $\varphi = \prod_{p \in S} \varphi_p$ is a product function then
\[\lim_{k+N \rightarrow \infty} \sum_{f \in \mathcal{S}_k(N)^*} \omega_{f, N, k} \varphi((a_p(f), b_p(f))_{p \in S}) = \prod_{p \in S} \int_{Y_p} \varphi_p\: d\mu_p.\]\end{theorem}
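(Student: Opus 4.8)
The plan is to follow the Bessel-model method of \cite{KowalskiSahaTsimerman2012}, converting the weak-$*$ convergence into a statement about weighted averages of Fourier coefficients that can be attacked by a Petersson/Poincar\'e-series identity. First I would observe that for $f\in\mathcal{S}_k(N)^*$ and $p\in S$ the local parameter $\pi_p(f)$ is either tempered ($|a_p(f)|=|b_p(f)|=1$) or, when $f$ is a Saito--Kurokawa lift, a point with $|a_p(f)|=1$ and $|b_p(f)|=\sqrt p$ for a suitable representative; hence every $\nu_{S,N,k}$ is supported in one fixed compact subset $K_S\subset Y_S$, $\mu_S$ is supported on the tempered locus $I_S=\prod_{p\in S}I_p\subseteq K_S$, and the masses $\nu_{S,N,k}(Y_S)=\sum_f\omega_{f,N,k}$ will turn out to be bounded (this is the $\varphi\equiv 1$ case). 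It therefore suffices, by Stone--Weierstrass and a separate check that the Saito--Kurokawa points contribute negligibly (so that density of test functions on the tempered locus is enough), to prove the limit formula for $\varphi=\prod_{p\in S}\varphi_p$ with each $\varphi_p$ one of Sugano's functions $\mathcal{B}_p^{(\ell)}$ on $Y_p$ appearing in Theorem \ref{sugano-formula}, normalised so that $\mathcal{B}_p^{(0)}\equiv 1$; these are $W$-invariant Laurent polynomials in $(a,b)$, real-valued on $I_p$, and the subalgebra they generate separates points and contains the constants of $C(I_p)$, hence is dense there.

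For such a $\varphi$ I would unwind $\int_{Y_S}\varphi\,d\nu_{S,N,k}=\sum_{f\in\mathcal{S}_k(N)^*}\omega_{f,N,k}\prod_{p\in S}\mathcal{B}_p^{(\ell_p)}(\pi_p(f))$ using the two ingredients of \S\ref{bessel-models}: Sugano's formula identifies each $\mathcal{B}_p^{(\ell_p)}(\pi_p(f))$ with the value of the spherical vector in the local Bessel model, and a global Bessel-period computation identifies $a(d,\Lambda;f)\prod_{p\in S}\mathcal{B}_p^{(\ell_p)}(\pi_p(f))$ with a finite linear combination $\sum_T c_T\,a(T;f)$ of Fourier coefficients, the $T$ running over semi-integral positive definite matrices of determinant $\tfrac{d}{4}\prod_{p\in S}p^{2\ell_p}$. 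Inserting the definition (\ref{weight-def}) of $\omega_{f,N,k}$, the sum becomes $\frac{c_k^{d,\Lambda}}{\vol(\Gamma_0(N)\backslash\mathbb{H}_2)}\sum_{f\in\mathcal{S}_k(N)^*}\frac{\overline{a(d,\Lambda;f)}\sum_T c_T\,a(T;f)}{\langle f,f\rangle}$, which is independent of the basis by Lemma \ref{measure-well-defined}, and each inner sum $\sum_f\overline{a(S';f)}\,a(T;f)/\langle f,f\rangle$ is, up to normalising constants, the $S'$-th Fourier coefficient of the $T$-th Poincar\'e series of weight $k$ and level $N$.

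I would then apply the geometric (coset) expansion of this Poincar\'e-series Fourier coefficient. The identity coset yields a main term that is nonzero only when $T$ is $\GL_2(\mathbb{Z})$-equivalent to some $\mathsf{S}_c$ with $c\in\Cl_d$, which forces $\prod_p p^{2\ell_p}=1$, i.e.\ all $\ell_p=0$ and hence $\varphi\equiv 1$; the remaining cosets contribute a sum of Kloosterman-type terms weighted by Bessel functions, bounded by \S\ref{sctn:fourier-coeff-bound} by a quantity tending to $0$ as $k+N\to\infty$. On the spectral side the constant $c_k^{d,\Lambda}$ --- with its archimedean $\Gamma$-factors, the factor $(d/4)^{-k+3/2}$, and $d_\Lambda/(w(-d)|\Cl_d|)$ --- is calibrated precisely so that when all $\ell_p=0$ the main term equals $\mu_S(Y_S)=\prod_p\mu_p(Y_p)=\int_{Y_S}1\,d\mu_S$, while for some $\ell_p\neq 0$ the main term is $0$, matching $\int_{Y_S}\varphi\,d\mu_S=\prod_p\int_{Y_p}\mathcal{B}_p^{(\ell_p)}\,d\mu_p=0$ since $\mathcal{B}_p^{(\ell)}$ is orthogonal to $\mathcal{B}_p^{(0)}\equiv 1$ with respect to $\mu_p$ for $\ell\neq 0$ (this orthogonality being the defining feature of the local Plancherel measure $\mu_p$). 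Hence $\int_{Y_S}\varphi\,d\nu_{S,N,k}\to\int_{Y_S}\varphi\,d\mu_S$ for every $\varphi$ in the spanning family; by density this extends to all $\varphi\in C(K_S)$, and since $d\mu_S=\prod_p d\mu_p$ the product-function statement is immediate. These steps are then assembled in \S\ref{main-theorem}.

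The main obstacle is the uniform decay of the error term: one needs the Fourier coefficients of the relevant weight-$k$, level-$N$ Poincar\'e series bounded by a quantity tending to $0$ as $k+N\to\infty$ \emph{simultaneously} in the two aspects, so that no ray in the $(k,N)$-plane escapes the estimate. This is the technical heart of the paper, executed in \S\ref{sctn:fourier-coeff-bound}, and is where genuinely new analysis beyond \cite{KowalskiSahaTsimerman2012} (fixed $N$) and \cite{ChidaKatsuradaMatsumoto2011} (fixed $k$) is needed; the improved decay in $N$ is a byproduct. A lesser subtlety --- that $\mathcal{S}_k(N)^*$ consists of forms that are Hecke eigenfunctions only at the primes of $S$ --- causes no difficulty, since the Petersson/Poincar\'e-series identity requires only an orthogonal basis and Lemma \ref{measure-well-defined} guarantees the limit is basis-independent; the non-tempered (Saito--Kurokawa) locus is likewise handled, its total weight being forced to $o(1)$ by the computation itself.
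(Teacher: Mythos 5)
Your proposal follows the same route as the paper: reduce, via Weyl's criterion, to testing against the spanning family of Sugano functions (the paper's $U_p^{l,m}$ -- note these carry two indices because the diagonal element $h_p(l,m)$ has two parameters), then combine the global Bessel-period identity, Sugano's formula, the Petersson/Poincar\'e-series expansion, and the simultaneous-in-$(k,N)$ Fourier-coefficient bound of \S\ref{sctn:fourier-coeff-bound} to establish the asymptotic orthogonality of Proposition \ref{convergence-on-U_p}, which together with Proposition \ref{plancherel-integral-U_p} gives the result.

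One presentational point worth tightening: you do not need, and the paper does not perform, a ``separate check'' that the Saito--Kurokawa locus is negligible, nor is it the right framing to say density of test functions on the tempered locus suffices. The functions $U_p^{l,m}$ are Laurent polynomials on all of $Y_p$ (not just $I_p$) and they span $C$ of the compact set $K_S$ that contains every admissible Satake-parameter tuple, tempered or not; the Weyl criterion therefore applies on $K_S$ directly. That the non-tempered mass of $\nu_{S,N,k}$ tends to $0$ is then a \emph{consequence} of the convergence $\nu_{S,N,k}\to\mu_S$ (with $\mu_S$ supported on the tempered locus), not an additional input -- which is exactly what your closing parenthetical says, but that observation should be the argument rather than an afterthought.
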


\noindent The proof of (a quantitative version of) this theorem is the goal of the next three sections.\\

\noindent Before proceeding, let us remark on the cases of low (even) weight which are not covered by Theorem \ref{local-equidistribution-theorem-qualitive} (i.e. $k=2, 4$).  As we shall see, the condition $k \geq 6$ is necessary for absolute convergence of a certain Poincar\'{e} series (required also in related calculations in \cite{KowalskiSahaTsimerman2012} and \cite{ChidaKatsuradaMatsumoto2011}) and is an artefact of our method.  In \cite{KowalskiSahaTsimerman2012} this condition is not an issue as they work in the limit $k \rightarrow \infty$.  However, in our context, the level aspect for fixed small weight is an interesting case which is not addressed by our results.  Note that the weight $k=4$ (for which the $\infty$-type is cohomological) in the level aspect is included in the work of \cite{ShinTemplier2013}.

\section{Bessel models}\label{bessel-models}

\noindent  \textbf{Global Bessel models.}  We begin by recalling the definition of the global Bessel model for a cuspidal representation of $G(\mathbb{A})$ in the fashion of \cite{Furusawa1993}, \cite{KowalskiSahaTsimerman2012}.  Let $\mathsf{S} \in \mathbb{Q}_{\text{sym}}^{2 \times 2}$ be positive definite.\begin{footnote}{It is customary to use $S$ for this matrix in the definition of the Bessel model.  However this clashes with our earlier notation for our finite set of primes.  To minimize confusion we use the standard letter but in sanserif font.}\end{footnote}  Let $\disc(\mathsf{S}) = -4\det(\mathsf{S}) < 0$ and $d = 4 \det(\mathsf{S}) > 0$.  If we write $\mathsf{S} = \left(\begin{smallmatrix} a & b/2 \\ b/2 & c \end{smallmatrix}\right)$, then we define $\xi = \xi_\mathsf{S}$ by
\[\xi = \left(\begin{matrix} b/2 & c \\ -a & -b/2 \end{matrix}\right).\]
Let $L = \mathbb{Q}(\sqrt{-d})$.  We have an isomorphism
\[\mathbb{Q}(\xi) \to L\]
defined by 
\[a + b\xi \mapsto a + b\frac{\sqrt{-d}}{2}.\] 
Now define the algebraic group
\[T = \{g \in \GL_2;\: {}^t g \mathsf{S} g = \det(g)\mathsf{S}\}.\]
A straightforward computation shows that $\mathbb{Q}(\xi)^\times = T(\mathbb{Q})$, and hence we can identity $T(\mathbb{Q})$ with $L^\times$.  We embed $T$ as a subgroup of $G$ via
\begin{equation}\label{embed-gl-in-gsp4} g \mapsto \left(\begin{matrix} g & 0 \\ 0 & \det(g) {}^tg^{-1}\end{matrix}\right).\end{equation}
Define another subgroup of $G$ by
\[U = \left\{u(X) = \left(\begin{matrix} 1_2 & X \\ 0_2 & 1_2 \end{matrix}\right);\: {}^tX = X\right\},\]
and let $R = TU$. \\

\noindent Let $\psi = \prod_v \psi_v$ be a character of $\mathbb{A}$ such that the conductor of $\psi_p$ is $\mathbb{Z}_p$ for all finite primes $p$, $\psi_\infty(x) = e(x)$ for $x \in \mathbb{R}$, and $\psi|_\mathbb{Q} = 1$.  Define a character $\theta$ of $U(\mathbb{A})$ by
\[\theta(u(X)) = \psi(\tr(\mathsf{S}X)).\]  Let $\Lambda$ be a character of $T(\mathbb{A})/T(\mathbb{Q})$ such that $\Lambda|_{\mathbb{A}^\times} = 1$.  Using the above isomorphism we see that this can be thought of as a character of $\mathbb{A}_{L^\times}/L^\times$ such that $\Lambda|_{\mathbb{A}^\times} = 1$.  Define a character $\Lambda \otimes \theta$ of $R(\mathbb{A})$ by $(\Lambda \otimes \theta)(tu) = \Lambda(t)\theta(u)$ for $t \in T(\mathbb{A})$, $u \in U(\mathbb{Q})$. \\

\noindent Now let $\pi$ be a cuspidal representation of $G(\mathbb{A})$ with trivial central character, and let $V_\pi$ be its space of automorphic forms.  For $\Phi \in V_\pi$, we define a function $B_\Phi$ on $G(\mathbb{A})$ by
\begin{equation}\label{bessel-function-def} B_\Phi(g) = \int_{R(\mathbb{Q}) Z_G(\mathbb{A}) \backslash R(\mathbb{A})} \overline{(\Lambda \otimes \theta)(r)} \Phi(rg) dr. \end{equation}
Note that the complex vector space $\mathbb{C}\langle B_\Phi;\: \Phi \in V_\pi \rangle$ is preserved under the right regular action of $G(\mathbb{A})$, since $\{\Phi \in V_\pi\}$ is. \\

\noindent  Consider the case that $\pi = \bigotimes_v \pi_v$ is an \textit{irreducible} cuspidal representation with trivial central character, with space of automorphic forms $V_\pi$.  If $\mathbb{C}\langle B_\Phi; \:\Phi \in V_\pi \rangle$ is nonzero then the representation afforded by the right regular action of $G(\mathbb{A})$ on this space is isomorphic to $\pi$.  We call the resulting representation a \textit{global Bessel model of type $(\mathsf{S}, \theta, \Lambda)$ for $\pi$}. \\

\noindent  \textbf{Local Bessel models.}  Let $\pi$ be an irreducible cuspidal representation of $G(\mathbb{A}$) with trivial central character.  Fix an isomorphism $\pi \simeq \otimes'_v \pi_v$, where the $\pi_v$ are irreducible, unitary, admissible representations of $G(\mathbb{Q}_v)$.  Let $\Omega$ be a finite set of places, containing $\infty$, such that if $p \notin \Omega$ then $\pi_p$ is a spherical principal series representation.  We now describe the local Bessel function on $G(\mathbb{Q}_p)$ associated to $\pi_p$ for $p \notin \Omega$.  From the character data $\Lambda$, $\theta$ for the global Bessel model we have induced characters $\Lambda_p, \theta_p$ of $T(\mathbb{Q}_p)$, $U(\mathbb{Q}_p)$ respectively.  Let $\mathcal{B}$ be the space of locally constant functions $\varphi$ on $G(\mathbb{Q}_p)$ such that
\[\varphi(tug) = \Lambda_p(t) \theta_p(u) \varphi(g),\text{ for }t\in T(\mathbb{Q}_p), u \in U(\mathbb{Q}_p), g \in G(\mathbb{Q}_p).\]
From the results of \cite{NovodvorskiPiatetski-Shapiro1973} we know that there is a unique subspace $\mathcal{B}(\pi_p)$ of $\mathcal{B}$ such that the right regular action of $G(\mathbb{Q}_p)$ on $\mathcal{B}(\pi_p)$ is isomorphic to $\pi_p$.  Let $B_p$ be the unique $G(\mathbb{Z}_p)$-fixed vector in $\mathcal{B}(\pi_p)$ such that $B_p(1_4) = 1$.  As explained in \cite{Furusawa1993}, $B_p$ is completely determined by the values $B_p(h_p(l, m))$ where
\begin{equation}\label{hp-def} h_p(l, m) = \diag(p^{l+2m}, p^{l+m}, 1, p^m)\end{equation}
for $l, m \geq 0$.  The following theorem of Sugano gives a formula for these values:

\begin{theorem}[Sugano, \cite{Sugano1985} p544; see also \cite{Furusawa1993} (3.6)]\label{sugano-formula}  Let $\pi_p$ be a spherical principal series representation of $G(\mathbb{Q}_p)$ with associated parameters $(a, b) = (\sigma(p), \sigma(p)\chi_1(p))$ as described in \S\ref{the-equidistribution-problem}.  Let $B_p$ be the normalized spherical vector in the local Bessel model.  Let $l, m \geq 0$ be integers, and $h_p(l, m) \in G(\mathbb{Q}_p)$ be defined by (\ref{hp-def}).  Then
\[B_p(h_p(l, m)) = p^{-2m - \frac{3l}{2}} U_p^{l, m}(a, b),\]
for $U_p^{l, m}$ given by the coefficients of the power series in \cite{Furusawa1993} (3.6).  The set of functions $\{U_p^{l, m};\: l, m \geq 0\}$ linearly generate a dense subspace of the space $C(Y_p)$ of continuous functions on $Y_p$.  \end{theorem}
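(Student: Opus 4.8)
The displayed formula for $B_p(h_p(l,m))$ is the quoted result of Sugano and Furusawa; the point requiring proof is the final sentence, that the $U_p^{l,m}$ linearly generate $C(Y_p)$. The plan is to reduce this to a statement about $W$-invariant Laurent polynomials and then apply the Stone--Weierstrass theorem. Let $A \subseteq C(Y_p)$ be the $\mathbb{C}$-linear span of $\{U_p^{l,m} : l,m \ge 0\}$, and let $R$ be the ring of $W$-invariant elements of $\mathbb{C}[a^{\pm 1}, b^{\pm 1}]$. Since the power series in (3.6) of \cite{Furusawa1993} is manifestly invariant under the generators (\ref{weyl-group-generators}) of $W$, each $U_p^{l,m}$ lies in $R$, so $A \subseteq R$. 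Putting $x = a + a^{-1}$ and $y = b + b^{-1}$ identifies $R$ with the ring $\mathbb{C}[x,y]^{S_2}$ of symmetric polynomials, which has the monomial symmetric polynomials $m_\lambda(x,y)$, indexed by partitions $\lambda = (\lambda_1 \ge \lambda_2 \ge 0)$ with at most two parts, as a $\mathbb{C}$-basis. Grade $R$ by total degree in $x,y$, so that $m_\lambda$ is homogeneous of degree $|\lambda| = \lambda_1 + \lambda_2$ and $R = \bigoplus_{d \ge 0} R_d$ with each $R_d$ finite dimensional.

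The crux is the claim that $U_p^{l,m}$ has degree $l + 2m$, with leading (top-degree) component a nonzero scalar multiple of $m_{(l+m,m)}(x,y)$. I would verify this by extracting the coefficient of the relevant monomial from the explicit rational generating function of (3.6) and keeping only the term of highest power in $a$, then in $b$: this should yield the monomial $a^{l+m}b^m$, whose $W$-symmetrization is $m_{(l+m,m)}$ up to a nonzero constant, and one must check that this contribution does not cancel. Granting the claim, the assignment $(l,m) \mapsto \lambda(l,m) := (l+m, m)$ is a bijection from $\{(l,m) : l,m \ge 0\}$ onto the set of partitions with at most two parts, with $|\lambda(l,m)| = l + 2m$; hence the leading components of the functions $U_p^{l,m}$ with $l + 2m \le n$ run, without repetition and up to nonzero scalars, over the monomial basis of $R_{\le n} := \bigoplus_{d \le n} R_d$. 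An elementary triangularity argument (passing to leading terms with respect to the degree filtration) then shows that $\{U_p^{l,m} : l + 2m \le n\}$ is a $\mathbb{C}$-basis of $R_{\le n}$ for every $n$, so $A = R$. I expect the bookkeeping with (3.6) to be the one genuinely computational point and the main obstacle; alternatively one could quote that the $U_p^{l,m}$ form a basis of $R$, which is essentially contained in \cite{Sugano1985}.

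Finally, $R = A$ is a subalgebra of $C(Y_p)$ containing the constant functions, since $U_p^{0,0} = 1$, and it separates points: a point of $Y_p$ corresponds to a $W$-orbit in $\mathbb{C}^\times \times \mathbb{C}^\times$, and such an orbit is determined by the unordered pair $\{a + a^{-1}, b + b^{-1}\}$ and hence by the two values $x + y$ and $xy$, both lying in $R$. For any compact $K \subseteq Y_p$ the Stone--Weierstrass theorem therefore shows that the restrictions of the $U_p^{l,m}$ span a uniformly dense subspace of $C(K)$; that is, $\{U_p^{l,m}\}$ linearly generates $C(Y_p)$ in the topology of uniform convergence on compact sets. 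This suffices for all later applications, where the relevant measures are supported on the compact subset $I_p \subseteq Y_p$, and completes the proof of the final assertion.
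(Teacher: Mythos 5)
The paper does not actually prove Theorem~\ref{sugano-formula}: the formula for $B_p(h_p(l,m))$ is quoted from Sugano and Furusawa, and the assertion that the $U_p^{l,m}$ linearly generate $C(Y_p)$ is delegated to Proposition~2.7 of \cite{KowalskiSahaTsimerman2012}. Your proposal is therefore supplying an argument the paper itself omits, and the framework you choose --- identify the span with the ring of $W$-invariant Laurent polynomials via a leading-term triangularity argument, then apply Stone--Weierstrass --- is the natural one and almost certainly what underlies the cited proposition. The algebraic bookkeeping is correct: $\mathbb{C}[a^{\pm1},b^{\pm1}]^W \cong \mathbb{C}[x,y]^{S_2}$ via $x=a+a^{-1}$, $y=b+b^{-1}$, the $m_\lambda$ are a basis, and $(l,m)\mapsto(l+m,m)$ is a bijection onto two-part partitions compatible with the degree $l+2m$; the low cases $U_p^{0,0},U_p^{1,0},U_p^{0,1},U_p^{2,0}$ in~(\ref{eqn:U_p-definition}) are consistent with your claimed leading terms.

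Two points need attention. First, the claim that the top-degree component of $U_p^{l,m}$ is a nonzero multiple of $m_{(l+m,m)}$ is the entire content of the reduction, and you explicitly defer its verification; as written, the proof has a genuine hole exactly at the computational step you flag. Second, the Stone--Weierstrass step is stated too broadly: for complex-valued functions one needs the subalgebra to be self-adjoint, and the span of the $U_p^{l,m}$ is not closed under complex conjugation on an arbitrary compact $K\subseteq Y_p$. It \emph{is} self-adjoint on the locus of Satake parameters of unitarizable representations, since there $\{\bar a,\bar a^{-1}\}=\{a,a^{-1}\}$ (so $x,y$ are real) and $\lambda_p$ is real, hence each $U_p^{l,m}$ is real-valued; and since the measures $\nu_{S,N,k}$ and $\mu_S$ are supported in a compact subset of that locus, this is enough for all uses in the paper. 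You should therefore restrict the compact sets to the unitary locus rather than asserting density on every compact $K\subseteq Y_p$.
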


\noindent  The point of Theorem \ref{sugano-formula} is, of course, that we have an explicit formula for $B_p(h_p(l, m))$.  The formula is fairly involved (for an exposition in a situation similar to our own, see \cite{Furusawa1993} (3.6)) and so we do not recall it here.  For the proof of our local equidistribution statement we only require two properties, namely the (already stated) fact that that $U_p^{l, m}$ generate (a dense subspace of) $C(Y_S)$ (see \cite{KowalskiSahaTsimerman2012} Proposition 2.7), and our Proposition \ref{convergence-on-U_p} (for which we will refer to \cite{FurusawaShalika2002} or \cite{KowalskiSahaTsimerman2012}).  For our application to low-lying zeros we will also use the formulas for the first few $U_p^{l, m}$, given as follows: as in the definition of $\mu_S$ write $\lambda_p = \sum_{N(\mathfrak{p}) = p} \Lambda(p)$ where $\Lambda$ is our fixed character of $\Cl_d$ and $\mathfrak{p}$ is prime in $\mathbb{Q}(\sqrt{-d})$.  Let $\left(\frac{d}{\cdot}\right)$ be the character of the extension $\mathbb{Q}(\sqrt{-d})/\mathbb{Q}$, which takes the value $1, 0, -1$ on a rational prime $p$ according to whether $p$ is split, ramified, or inert in $\mathbb{Q}(\sqrt{-d})$.  Set 
\[\begin{aligned} \sigma(a, b) &= a + b + a^{-1} + b^{-1}, \\ 
\tau(a, b) &= 1 + ab + a^{-1}b + ab^{-1} + a^{-1}b^{-1}. \end{aligned}\]
Then
\begin{equation}\label{eqn:U_p-definition} \begin{aligned} U_p^{0, 0}(a, b) &= 1, \\
U_p^{1, 0}(a, b) &= \sigma(a, b) -p^{-1/2} \lambda_p, \\
U_p^{2, 0}(a, b) &= a^2 + b^2 + a^{-2} + b^{-2} + 2\tau(a, b) + 2 - p^{-1/2}\lambda_p\sigma(a, b) + p^{-1} \left(\frac{d}{p}\right),\\
U_p^{0, 1}(a, b) &= \tau(a, b) - \left(p - \left(\frac{d}{p}\right)\right)^{-1}\left(p^{1/2} \lambda_p \sigma(a, b) - \left(\frac{d}{p}\right)(\tau(a, b) - 1) - \lambda_p^2\right).\end{aligned}\end{equation}\\

\noindent  \textbf{Local-global compatibility.}  Recall that $\pi \simeq \bigotimes'_v \pi_v$ is an irreducible cuspidal representation with trivial central character.  Suppose further that $\Phi = \otimes_v \Phi_v$ is a pure tensor in $V_\pi$.  Let $\Omega$ be as above, and for $g = (g_v) \in G(\mathbb{A})$ let $g_\Omega = \prod_{v \in \Omega} g_v$.  Then by uniqueness of local Bessel models
\begin{equation}\label{bessel-global-to-local} B_{\Phi}(g) = B_{\Phi}(g_\Omega) \prod_{p \notin \Omega} B_p(g_p).\end{equation}
Note that (\ref{bessel-global-to-local}) makes sense even if both sides are zero.\\

\noindent  \textbf{A computation with global Bessel models.}  We now consider the implications of (\ref{bessel-global-to-local}) for the class of Siegel modular forms we are interested in.  Let $S$ be a finite set of primes and let $f \in \mathcal{S}_k(N)$ where $\gcd(N, S)=1$.  Assume that $f$ is an eigenform for the local Hecke algebras at $p \in S$.  Recall the representation $\pi_f$ attached to $f$ decomposes\begin{footnote}{Still not necessarily uniquely, and this is still not a problem.}\end{footnote} as $\pi_f = \bigoplus_{i=1}^m \pi_f^{(i)}$, where each $\pi_f^{(i)}$ is an irreducible cuspidal representation of $G(\mathbb{A})$.  Thus each vector $\Phi \in \pi_f$ is a sum of vectors $\Phi_i$ in the irreducible cuspidal representations $\pi_f^{(i)}$.  Also, for each $1 \leq i \leq m$, by the tensor product theorem, we have $\pi_f^{(i)} \simeq \otimes'_v \pi_{f. v}^{(i)}$ where the $\pi_{f, v}^{(i)}$ are irreducible, unitary, admissible representations of $G(\mathbb{Q}_v)$.  Thus each vector $\Phi_i \in \pi^{(i)}$ is in turn a sum of pure tensors $\otimes_v \Phi_{i, v}^{(j)} \in \otimes'_v \pi_{f, v}^{(i)}$.  In particular, suppressing the subscript $i$, we can write 
\begin{equation}\label{Phif-as-sum-of-pure-tensors} \Phi_f = \sum_{j=1}^n \otimes_v \Phi_{f, v}^{(j)}\end{equation}
where each $\otimes_v \Phi_{f, v}^{(j)}$ is a pure tensor in some irreducible cuspidal representation $\pi_f^{(i)}$ with $1 \leq i \leq m$.  \\

\noindent  Let $\mathsf{S}, \theta, \Lambda$ be given.  For the representation $\pi = \pi_f$ we can define, for any vector $\Phi \in V_\pi$, the Bessel functional $B_\Phi$ by (\ref{bessel-function-def}).  We ease notation by temporarily writing $B_{\Phi}(\cdot) = B(\cdot;\:\Phi)$.  From the definition and (\ref{Phif-as-sum-of-pure-tensors}) it is clear that
\begin{equation}\label{bessel-of-f-decomposition} B(\cdot;\:\Phi_f) = \sum_{j=1}^n B(\cdot;\:\otimes \Phi_{f, v}^{(j)}).\end{equation}

\noindent  Fix some $1 \leq j \leq n$ and consider $B(\cdot;\; \otimes_v \Phi_{f, v}^{(j)})$.  Let $\Omega = \{\infty\} \cup \{p \mid N\}$.  All of the local components $\pi_{f, p}^{(i)}$ at $p \notin \Omega$ are spherical principal series so $\Omega$ satisfies the hypotheses necessary for (\ref{bessel-global-to-local}).  Thus we have, for any $g \in G(\mathbb{A})$,
\begin{equation}\label{bessel-global-to-local-f-component} B(g;\; \otimes_v \Phi_{f, v}^{(j)}) = B(g_\Omega;\; \otimes_v \Phi_{f, v}^{(j)}) \prod_{p \notin \Omega} B_p^{(i)}(g_p),\end{equation}
where $B_p^{(i)}$ is the spherical vector in the Bessel model for the spherical principal series representation $\pi_{f, p}^{(i)}$ (recall $\otimes_v \Phi_v^{(j)} \in \otimes'_v \pi_{f, v}^{(i)} \simeq \pi_f^{(i)}$).  As $i$ varies, the local representations $\pi_{f, p}^{(i)}$ for $p \in S$ lie in the same isomorphism class.  In particular, as $i$ varies, the associated Bessel models to $\pi_{f, p}^{(i)}$ is the same space of functions on $G(\mathbb{Q}_p)$, and each $B_p^{(i)}$ is the same vector $B_p$.  So (\ref{bessel-global-to-local-f-component}) becomes
\begin{equation}\label{bessel-global-to-local-f-component2}B(g;\; \otimes_v \Phi_{f, v}^{(j)}) = B(g_\Omega;\; \otimes_v \Phi_{f, v}^{(j)}) \prod_{p \in S} B_p(g_p) \prod_{p \notin (\Omega \cup S)} B_p^{(i)}(g_p),\end{equation}
and putting these in to (\ref{bessel-of-f-decomposition}) we obtain 
\begin{equation}\label{split-bessel-global-to-local-f} B(g;\; \Phi_f) = \prod_{p \in S} B_p(g_p) \left(\sum_{j=1}^n B(g_\Omega;\; \otimes \Phi_{f, v}^{(j)}) \prod_{p \notin (\Omega \cup S)} B_p^{(i)}(g_p)\right) \end{equation}
where $i = i(j)$ is such that $\otimes_v \Phi_v^{(j)} \in \otimes'_v \pi_{f, v}^{(i)}$.  
In particular, if $g$ has the form
\[g_v = \begin{cases} 1_4 & v \notin S \\
g_p & v \in S \end{cases}\]
then, by our normalisation of the $B_p^{(i)}$, (\ref{split-bessel-global-to-local-f}) reads
\begin{equation}\label{split-bessel-global-to-local-f-nice-adeles} B(g;\: \Phi_f) = \prod_{p \in S} B_p(g_p) \left(\sum_{j=1}^n B(1_4;\: \otimes \Phi_{f, v}^{(j)})\right). \end{equation}
We will use (\ref{split-bessel-global-to-local-f-nice-adeles}) by explicitly computing the left hand side for certain $g \in G(\mathbb{A})$.  Namely, let $L, M$ be integers with all their prime factors in $S$, and define $H(L, M) \in G(\mathbb{A})$ by
\[H(L, M)_v = \begin{cases} \diag(LM^2, LM, 1, M) & v \in S, \\
1_4 & v \notin S. \end{cases}\]
In particular, $H(1, 1) = 1_4$.  The first step is to reduce the computation of $B(H(L, M);\:\Phi_f)$ to the computation for $H(1, 1)$ with a possibly different modular form:

\begin{lemma}\label{global-bessel-reduction-lemma}  Let $S$ be a finite set of primes, $N$ be a positive integer with $\gcd(N, S)=1$, and $L, M$ positive integers with all their prime factors in $S$.  Let $f \in \mathcal{S}_k(N)$.  Then there exists $f' \in \mathcal{S}_k$ such that
\[B(H(L, M);\; \Phi_f) = B(H(1, 1);\; \Phi_{f'})\]  \end{lemma}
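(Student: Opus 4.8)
The plan is to realize $B(H(L,M);\Phi_f)$ as the value at the identity of the Bessel function of a right translate of $\Phi_f$, and then to check that this translate is itself the adelization of a classical Siegel cusp form of possibly larger level; that cusp form will be the desired $f'$. Write $\rho$ for the right regular action of $G(\mathbb{A})$, so $(\rho(h)\Phi)(g)=\Phi(gh)$, and note that $H(L,M)$ is a finite idele, being $1_4$ at every place outside $S$. Directly from (\ref{bessel-function-def}),
\[B(H(L,M);\Phi_f) = \int_{R(\mathbb{Q})Z_G(\mathbb{A})\backslash R(\mathbb{A})}\overline{(\Lambda\otimes\theta)(r)}\,\big(\rho(H(L,M))\Phi_f\big)(r)\,dr = B\big(H(1,1);\,\rho(H(L,M))\Phi_f\big),\]
since $H(1,1)=1_4$. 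So it suffices to exhibit $f'\in\mathcal{S}_k$ with $\Phi_{f'}=\rho(H(L,M))\Phi_f$, i.e. to show that $\Phi:=\rho(H(L,M))\Phi_f$ lies in the image $V_k$ of the adelization map.

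For this I would run through the conditions characterizing $V_k$ (see \cite{Saha2013}, Theorem 1). Left $G(\mathbb{Q})$-invariance and triviality of the central character are immediate, since $H(L,M)$ acts on the right and lies away from $Z_G(\mathbb{A})$. Cuspidality is inherited: for any $g$ one has $\int_{U(\mathbb{Q})\backslash U(\mathbb{A})}\Phi_f(ugH(L,M))\,du=0$, because this is the constant term of $\Phi_f$ along $U$ evaluated at $gH(L,M)$, and likewise for the unipotent radicals of the other proper parabolic subgroups of $G$ defined over $\mathbb{Q}$. The archimedean transformation and holomorphy properties of $\Phi$ agree with those of $\Phi_f$ because $H(L,M)$ is a finite idele, hence commutes with $G(\mathbb{R})\subset G(\mathbb{A})$, so $\rho(H(L,M))$ intertwines the $G(\mathbb{R})$-actions. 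The only substantive point is right-invariance of $\Phi$ under $\prod_p K_p^{N'}$ for a suitable $N'$.

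To establish this I would argue as follows. Fix $p\in S$ and write $H(L,M)_p = h_p(l_p,m_p) = \diag(p^{l_p+2m_p},p^{l_p+m_p},1,p^{m_p})$ as in (\ref{hp-def}); since $\gcd(N,S)=1$ we have $K_p^N=G(\mathbb{Z}_p)$. If $u\in K_p^{p^{c}}$, write $u=\diag(1_2,a1_2)+p^{c}X$ with $a\in\mathbb{Z}_p^\times$ and $X\in\mathbb{Z}_p^{4\times 4}$; conjugating by the diagonal matrix $h_p(l_p,m_p)$ fixes $\diag(1_2,a1_2)$ and multiplies the $(i,j)$-entry of $X$ by a power of $p$ that is at worst $p^{-(l_p+2m_p)}$, so $h_p(l_p,m_p)^{-1}\,u\,h_p(l_p,m_p)\in G(\mathbb{Z}_p)$ whenever $c\geq l_p+2m_p$ (its multiplier is $\lambda(u)$, a unit, and its inverse is integral by the same estimate applied to $u^{-1}$). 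At the primes $p\mid N$ and at $p\notin S$ with $p\nmid N$ we have $H(L,M)_p=1_4$, so nothing is required there. Since $l_p+2m_p$ is the exponent of $p$ in $LM^2$, the choice $N'=N\cdot LM^2$ gives $H(L,M)^{-1}\big(\prod_p K_p^{N'}\big)H(L,M)\subseteq \prod_p K_p^{N}$, hence $\Phi$ is right $\prod_p K_p^{N'}$-invariant. Therefore $\Phi=\Phi_{f'}$ for a unique $f'\in\mathcal{S}_k(\Gamma(NLM^2))\subset\mathcal{S}_k$, which is the required form. The main obstacle is precisely this last congruence computation: the groups $K_p^{N}$ do not shrink to the identity as $N$ runs through powers of $p$ (they shrink to a torus), so one cannot appeal to continuity and must verify by hand that conjugating a sufficiently deep $K_p^{N'}$ by the diagonal matrix $h_p(l_p,m_p)$ lands inside $G(\mathbb{Z}_p)$.
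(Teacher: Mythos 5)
Your proof is correct, but it organizes the argument differently from the paper. You take the ``abstract'' route: show that $\rho(H(L,M))\Phi_f$ satisfies the conditions (left $G(\mathbb{Q})$-invariance, trivial central character, cuspidality, right-invariance by a deep enough open compact, compatibility with the archimedean $K$-type) characterizing the image $V_k$ of the adelization map, then appeal to the bijectivity of $\mathcal{S}_k \to V_k$ from \cite{Saha2013} to obtain $f'$. The paper instead exhibits $f'$ concretely as the classical form $f'(Z) = (LM)^{-k} f(H_\infty^{-1}\langle Z\rangle)$ with $H_\infty = \diag(LM^2, LM, 1, M)$, checks that $f'$ transforms under $H_\infty\Gamma_0(N)H_\infty^{-1} \supset \Gamma(NLM^2)$, and then verifies $\Phi_{f'} = \rho(H(L,M))\Phi_f$ via strong approximation by comparing the two sides on $G^+(\mathbb{R})$. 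The local congruence computation you carry out (conjugating $K_p^{p^{l_p+2m_p}}$ by $h_p(l_p,m_p)$ lands in $G(\mathbb{Z}_p)$) is the same fact the paper invokes with ``one easily checks'' when verifying the right-invariance of $\Phi_f^{L,M}$, so the technical content overlaps. What the explicit construction buys is relevant downstream: in the proof of Proposition \ref{compute-global-bessel} the paper uses the concrete modularity group (\ref{modularity-gp-f'}) of $f'$ to verify the extra right-invariance under the subgroup $\left\{g \in \GL_2(\mathbb{Z}_p); g \equiv \left(\begin{smallmatrix} * & 0 \\ * & * \end{smallmatrix}\right) \bmod M\right\}$ embedded via (\ref{embed-gl-in-gsp4}); your version would require a separate (though not difficult) argument there. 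Conversely, your route avoids the change-of-variable bookkeeping and makes the mechanism transparent: everything reduces to showing $\rho(H(L,M))$ preserves $V_k$ after enlarging the level.
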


\begin{proof}  Define $\Phi_f^{L, M}(g) = \Phi_f(gH(L, M))$.  Then clearly $B(H(L, M);\; \Phi_f) = B(H(1, 1);\;\Phi_f^{L, M})$.  Now let $H_{\infty} = \diag(LM^2, LM, 1, M) \in G^+(\mathbb{R})$ and define
\[f'(Z) = (LM)^{-k}f(H_{\infty}^{-1}\langle Z\rangle).\]
One easily checks that $f'(\gamma \langle Z\rangle) = j(\gamma, Z)^kf(Z)$ for
\begin{equation}\label{modularity-gp-f'} \gamma \in H_\infty \Gamma_0(N) H_\infty^{-1} = \left\{\left(\begin{matrix}* & M* & LM^{2}* & LM* \\ M^{-1}* & * & LM* & L* \\ L^{-1}M^{-2}N* & L^{-1}M^{-1}N* & * & M^{-1}* \\ L^{-1}M^{-1}N* & L^{-1}N* & M* & * \end{matrix}\right) \in \Sp_4(\mathbb{Q});\: * \in \mathbb{Z}\right\}.\end{equation}
This contains $\Gamma(NLM^2)$ as a subgroup of finite index, so is a congruence subgroup and $f' \in \mathcal{S}_k$.  Recall the choice of open compact subgroups (\ref{open-compact-def}).  The adelization $\Phi_{f'}$ of $f'$ is left invariant under $G(\mathbb{Q})$ and right invariant under $\prod_{p < \infty} K_p^{NLM^2}$.  \\

\noindent  We claim that $\Phi_{f'} = \Phi_{f}^{L, M}$.  Now one easily checks that $\Phi_{f}^{L, M}$ is also left-invariant under $G(\mathbb{Q})$ and right-invariant under $\prod_{p < \infty} K_p^{NLM^2}$, so it suffices to show that $\Phi_{f'}$ and $\Phi_f^{L, M}$ agree as functions on $G^+(\mathbb{R})$.  For $g_\infty \in G^+(\mathbb{R})$
\begin{equation}\label{tildeH-introduction} \Phi_{f}^{L, M}(g_\infty) = \Phi_f(g_\infty H(L, M)) = \Phi_f\left(\left(\begin{smallmatrix} L^{-1}M^{-2} & & & \\ & L^{-1}M^{-1} & & \\ & & 1 & \\ & & & M^{-1}\end{smallmatrix}\right) g_\infty H(L, M)\right)\end{equation}
where the first equality is the definition and the second follows from left-invariance of $\Phi_f$ under $G(\mathbb{Q})$.  Similarly using the right-invariance by $\prod_{p<\infty} K_p^{NLM^2}$ we can right-multiply the variable by the adele which is $\diag(LM^2, LM, 1, M)$ when $v \notin S \cup \infty$ and is $1_4$ otherwise (note that we are using the restriction on the prime factors of $L$ and $M$ here) to obtain
\[\Phi_{f}^{L, M}(g_\infty) = \Phi_f(H_\infty^{-1}g_\infty).\] 
A simple computation (see \cite{KowalskiSahaTsimerman2012} Proposition 2.1) then yields
\[\Phi_{f}^{L, M}(g_\infty) = \Phi_{f'}(g_\infty).\]\end{proof}

\noindent  Lemma \ref{global-bessel-reduction-lemma} reduces the computation of $B(H(L, M);\;\Phi_f)$ to the computation of $B(H(1, 1);\;\Phi_{f'})$, which is precisely the approach taken in \cite{KowalskiSahaTsimerman2012} (although note the slight change in our definition of $H(L, M)$).  In order to quote the result of the latter computation, we introduce their notation.  Given $M$, define
\[\Cl_d(M) = T(\mathbb{A}) / T(\mathbb{Q}) T(\mathbb{R}) \prod_{p < \infty} (T(\mathbb{Q}_p) \cap K_p^{(0)}(M)),\]
where $K_p^{(0)}(M) = \left\{g \in \GL_2(\mathbb{Z}_p);\: g \equiv \left(\begin{smallmatrix} * & 0 \\ * & * \end{smallmatrix}\right) \bmod M\right\}$.  $\Cl_d(1)=\Cl_d$ is isomorphic to the ideal class group of $\mathbb{Q}(\sqrt{-d})$; in general $\Cl_d(M)$ is the ray class group for the modulus $M$.  Pick coset representatives $t_c \in T(\mathbb{A})$ (indexed by $c \in \Cl_d(M)$) for this quotient, and write (by strong approximation for $T$)
\[t_c = \gamma_c m_c \kappa_c\]
with $\gamma_c \in \GL_2(\mathbb{Q})$, $m_c \in \GL_2^+(\mathbb{R})$, $\kappa_c \in \prod_{p<\infty} K_p^{(0)}(M)$.  Let
\begin{equation}\label{S_c-definition} \mathsf{S}_c := \frac{1}{\det(\gamma_c)}{}^t\gamma_c \mathsf{S} \gamma_c,\end{equation}
where $\mathsf{S}$ is the matrix for our choice of Bessel model.  We also define, for any symmetric matrix $Q$, the matrix
\begin{equation}\label{LM-mod-def} Q^{L, M} := \left(\begin{matrix} L & \\ & L \end{matrix}\right) \left(\begin{matrix} M & \\ & 1 \end{matrix}\right) Q \left(\begin{matrix} M & \\ & 1 \end{matrix}\right).\end{equation} 

\begin{proposition}\label{compute-global-bessel}  Suppose we have the same hypotheses as Lemma \ref{global-bessel-reduction-lemma}.  Then
\[B(H(L, M); \Phi_f) = \frac{re^{-2\pi \tr(\mathsf{S})} (LM)^{-k}}{\abs{\Cl_d(M)}} \sum_{c \in \Cl_d(M)} \overline{\Lambda(c)} a(\mathsf{S}_{c}^{L, M}; f)\]
where $r$ is a nonzero constant depending only on the normalization of Haar measure on the Bessel subgroup $R$.  \end{proposition}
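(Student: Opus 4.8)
The plan is to combine Lemma~\ref{global-bessel-reduction-lemma} with an adelic unfolding of the Bessel integral, following \cite{KowalskiSahaTsimerman2012} Propositions~2.1--2.5 but carrying the extra parameters $L$ and $N$ through. First I would invoke Lemma~\ref{global-bessel-reduction-lemma}, and in fact its \emph{proof}: this reduces the claim to evaluating $B(H(1,1);\Phi_{f'}) = B_{\Phi_{f'}}(1_4)$ for the explicit form $f'(Z) = (LM)^{-k} f(H_\infty^{-1}\langle Z\rangle)$ with $H_\infty = \diag(LM^2, LM, 1, M)$. A short matrix computation gives $H_\infty^{-1}\langle Z\rangle = L^{-1}\,{}^t\beta^{-1} Z \beta^{-1}$ with $\beta = \diag(M,1)$; substituting this into the Fourier expansion of $f$ and using $\tr(T\,{}^t\beta^{-1}Z\beta^{-1}) = \tr(\beta^{-1}T\,{}^t\beta^{-1} Z)$ yields $a(Q;f') = (LM)^{-k}\,a(Q^{L,M};f)$ for every positive semidefinite semi-integral $Q$, with $Q^{L,M}$ as in (\ref{LM-mod-def}). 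This is the source both of the factor $(LM)^{-k}$ and of the twisted matrices $\mathsf{S}_c^{L,M}$ in the final formula, so it remains only to prove
\[B_{\Phi_{f'}}(1_4) = \frac{r\,e^{-2\pi\tr(\mathsf{S})}}{\abs{\Cl_d(M)}}\sum_{c\in\Cl_d(M)} \overline{\Lambda(c)}\,a(\mathsf{S}_c;f').\]

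For this I would unfold the integral in (\ref{bessel-function-def}). Since $Z_G \subset T$ and $R = TU$ with $T\cap U = \{1\}$ and $T$ normalizing $U$, the integration domain factors and (up to an overall Haar-measure constant, absorbed into $r$)
\[B_{\Phi_{f'}}(1_4) = \int_{T(\mathbb{Q})Z_G(\mathbb{A})\backslash T(\mathbb{A})} \overline{\Lambda(t)}\left(\int_{U(\mathbb{Q})\backslash U(\mathbb{A})} \overline{\theta(u)}\,\Phi_{f'}(tu)\,du\right) dt.\]
Writing $t = \gamma m\kappa$ by strong approximation for $T$, pushing $\gamma$ to the left by left $G(\mathbb{Q})$-invariance of $\Phi_{f'}$ and conjugating $u$ through, the inner $U(\mathbb{A})$-integral against $\theta$ becomes a classical Fourier-coefficient extraction, producing $a(\mathsf{S}_c;f')$ with $\mathsf{S}_c$ as in (\ref{S_c-definition}), the archimedean factor $e^{-2\pi\tr(\mathsf{S})}$, and automorphy/determinant factors at $m_c$ and $\gamma_c$ which combine so that the extracted quantity is exactly the honest Fourier coefficient $a(\mathsf{S}_c;f')$. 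The outer integral then collapses to a finite sum: $\Phi_{f'}$ is right-invariant under $\prod_{p<\infty}K_p^{NLM^2}$, and after the previous step the integrand is $T(\mathbb{R})$-invariant and $\prod_{p<\infty}(T(\mathbb{Q}_p)\cap K_p^{(0)}(M))$-invariant, so it descends to a function on $\Cl_d(M)$; summing over coset representatives $t_c$ and using $\Lambda(t_c)=\Lambda(c)$ gives the displayed identity. Combining this with the Fourier-coefficient relation $a(\mathsf{S}_c;f') = (LM)^{-k}a(\mathsf{S}_c^{L,M};f)$ completes the proof.

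The main obstacle is purely the bookkeeping in the last two displays. I must match the Haar-measure normalizations so that what is left over is a single constant $r$ independent of $L,M,N,f$; verify that the torus integral collapses to the ray class group of modulus \emph{exactly} $M$ (not $M^2$ or $NLM^2$) --- this is precisely where the hypotheses $\gcd(N,S)=1$ and ``$L,M$ supported on $S$'' enter, since the primes dividing $N$ must contribute nothing to the torus level while the surplus factors of $L$ and $M$ must wash out upon intersecting the $G$-level with the embedded torus $T(\mathbb{Q}_p)$; and check the cancellation between the archimedean weight-$k$ automorphy factor and the $\det(\gamma_c)$-normalization built into $\mathsf{S}_c$. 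None of these introduces a genuinely new phenomenon beyond the $N=1$, $L=1$ case treated in \cite{KowalskiSahaTsimerman2012}, but the presence of $L$ and of the level $N$ forces the local computations at places in $S$ and at places dividing $N$ to be redone with care.
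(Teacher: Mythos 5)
Your proposal follows the same route as the paper: reduce $B(H(L,M);\Phi_f)$ to $B(H(1,1);\Phi_{f'})$ via Lemma~\ref{global-bessel-reduction-lemma}, then apply the unfolding argument of \cite{KowalskiSahaTsimerman2012} Proposition~2.1, the only non-formal step being the check that $\Phi_{f'}$ retains right-invariance at finite places under the embedded image of $K_p^{(0)}(M)$, which the paper reads off from the modularity group (\ref{modularity-gp-f'}). You correctly identify that this invariance (not $\bmod\, M^2$ or $\bmod\, NLM^2$, precisely because of $\gcd(N,S)=1$ and $L,M$ supported on $S$) is what forces the collapse of the torus integral to $\Cl_d(M)$, and the Fourier-coefficient identity $a(Q;f')=(LM)^{-k}a(Q^{L,M};f)$ you derive from $H_\infty^{-1}\langle Z\rangle = L^{-1}\,{}^t\beta^{-1}Z\beta^{-1}$ is exactly what makes the two ways of stating the result agree. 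The only difference from the paper is one of presentation: the paper cites the unfolding in \cite{KowalskiSahaTsimerman2012} as a black box and records only the invariance check, whereas you re-derive the unfolding; both are sound.
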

\begin{proof}  Lemma \ref{global-bessel-reduction-lemma} reduces this to the case in \cite{KowalskiSahaTsimerman2012} Proposition 2.1.  Note that this computation uses the fact that $\Phi_{f'}$ is right invariant under $\left\{g \in \GL_2(\mathbb{Z}_p);\: g \equiv \left(\begin{smallmatrix} * & 0 \\ * & * \end{smallmatrix}\right) \bmod M \right\}$, embedded as a subgroup of $G(\mathbb{Z}_p)$ via (\ref{embed-gl-in-gsp4}).  That this still holds in our case is clear from (\ref{modularity-gp-f'}).  \end{proof}

\noindent  Let $L, M$ be integers with all their prime factors in $S$ and $H(L, M)$ be as above.  By (\ref{split-bessel-global-to-local-f-nice-adeles}) we have
\[B(H(L, M); \Phi_f) = \prod_{p \in S} B_p(h_p(l_p, m_p)) \left(\sum_{j=1}^n B(1_4; \otimes_v \Phi_{f, v}^{(j)})\right)\]
where $l_p = \ord_p(L)$, $m_p = \ord_p(M)$ and $h_p(l_p, m_p) = \diag(p^{l_p + 2m_p}, p^{l_p + m_p}, 1, p^{m_p})$.  Also from (\ref{split-bessel-global-to-local-f-nice-adeles}) we have
\[B(H(1, 1);\: \Phi_f) = \left(\sum_{j=1}^n B (1_4;\:\otimes_v \Phi_{f, v}^{(j)})\right),\]
so
\[B(H(L, M);\: \Phi_f) = B(H(1, 1);\: \Phi_f)\prod_{p \in S} B_p(h_p(l_p, m_p)).\]
Using Proposition \ref{compute-global-bessel} twice we obtain
\begin{equation}\label{local-bessels-in-terms-of-fourier-coeff} \frac{(LM)^{-k}}{\abs{\Cl_d(M)}} \sum_{c \in \Cl_d(M)} \overline{\Lambda(c)}a(\mathsf{S}_c^{L,M}; f) = \prod_{p \mid LM} B_p(h_p(l_p, m_p)) \times \frac{1}{\abs{\Cl_d}} \sum_{c \in \Cl_d} \overline{\Lambda(c)}a(\mathsf{S}_c; f), \end{equation}
and hence using Theorem \ref{sugano-formula}
\begin{equation}\label{local-bessels-in-terms-of-sugano} \frac{\abs{\Cl_d}}{\abs{\Cl_d(M)}} \sum_{c \in \Cl_d(M)} \overline{\Lambda(c)}a(\mathsf{S}_c^{L,M}; f) = L^{k-\frac{3}{2}} M^{k-2} \sum_{c \in \Cl_d} \overline{\Lambda(c)}a(\mathsf{S}_c; f) \prod_{p \mid LM} U_p^{l_p, m_p}(a_p(f), b_p(f)). \end{equation}
Equation (\ref{local-bessels-in-terms-of-sugano}) is crucial to our argument.  It allows us to reduce the study of certain continuous functions $U_p^{l_p, m_p}$ on the space $X_p \subset Y_p$ at the parameters corresponding to $f$ to the study of certain sums of the Fourier coefficients of $f$.  In the next section we will prove a result that allows us to do the latter.

\section{Estimates for sums of Fourier coefficients of cusp forms}\label{sctn:fourier-coeff-bound}

They key to estimating (\ref{local-bessels-in-terms-of-sugano}) is the following proposition: 
\begin{proposition}\label{crly:applicable-version}  Let $k \geq 6$ be even, $N \geq 1$, and let $\mathcal{S}_k(N)^*$ be any orthogonal basis of $\mathcal{S}_k(N)$.  Let $d<0$ be a fundamental discriminant, $L$ and $M$ positive integers.  Recall the definition of $\Cl_d(M)$; for $c' \in \Cl_d(M)$ and $c \in \Cl_d$ recall also the matrices $\mathsf{S}_{c'}$ and $\mathsf{S}_c^{L, M}$ defined by (\ref{S_c-definition}) and (\ref{LM-mod-def}).  Then
\[\begin{aligned} &\frac{2}{\vol(\Gamma_0(N) \backslash \mathbb{H}_2)}\sqrt{\pi} (4\pi)^{3-2k} \Gamma\left(k - \frac{3}{2}\right) \Gamma(k-2) \left(\frac{d}{4}\right)^{-k + \frac{3}{2}} \sum_{f \in \mathcal{S}_k(N)^*} \frac{\overline{a(\mathsf{S}_{c'}; f)}a(\mathsf{S}_{c}^{L, M}; f)}{\langle f, f \rangle} \\
&\qquad\qquad= \delta(c, c', L, M) + E(N, k; c, c', L, M),\end{aligned}\]where
\[\delta(c, c', L, M) = \#\{U \in \GL_2(\mathbb{Z}); U\mathsf{S}_{c'}{}^tU = \mathsf{S}_c^{L, M}\}\]
(which may equal zero), and the error term satisfies
\[E(N, k; c, c', L, M) \ll_\epsilon N^{-1}k^{-\frac{2}{3}} (LM)^{k- \frac{1}{2} + \epsilon}.\]\end{proposition}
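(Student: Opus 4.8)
The plan is to recognize the left-hand side as a spectral average against the Petersson inner product, hence as an instance of the reproducing property of Poincaré series. First I would introduce, for a positive-definite semi-integral matrix $T$, the Poincaré series $P_{k,N,T}$ of weight $k$ and level $\Gamma_0(N)$ attached to $T$, normalized so that for any $f \in \mathcal{S}_k(N)$ one has $\langle f, P_{k,N,T}\rangle = c_{k,T}\, \overline{a(T;f)}$ for an explicit constant $c_{k,T}$ involving $\Gamma(k-3/2)\Gamma(k-2)$, a power of $\det T$, and a power of $4\pi$; this is the standard computation unfolding the Petersson integral against the Poincaré series (as in the classical degree-$1$ case, adapted to degree $2$). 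Expanding $P_{k,N,\mathsf{S}_{c'}}$ in any orthonormal basis $\mathcal{S}_k(N)^*$ and reading off its $\mathsf{S}_c^{L,M}$-th Fourier coefficient gives
\[
\sum_{f \in \mathcal{S}_k(N)^*} \frac{\overline{a(\mathsf{S}_{c'};f)}\, a(\mathsf{S}_c^{L,M};f)}{\langle f,f\rangle} \;=\; (\text{constant})\cdot a\big(\mathsf{S}_c^{L,M};\, P_{k,N,\mathsf{S}_{c'}}\big),
\]
so that after matching constants the identity of the proposition becomes: the normalized $\mathsf{S}_c^{L,M}$-th Fourier coefficient of $P_{k,N,\mathsf{S}_{c'}}$ equals the diagonal term $\delta(c,c',L,M)$ plus an error. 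The diagonal term is exactly the contribution of the identity double coset in the Fourier expansion of the Poincaré series (the $\GL_2(\mathbb{Z})$-count arises because two matrices $T, T'$ contribute to each other's Fourier coefficients precisely through $\GL_2(\mathbb{Z})$-equivalence, with $k$ even killing the sign ambiguity).

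The substance is then the bound on $E(N,k;c,c',L,M)$, i.e. bounding the off-diagonal (non-identity) terms in the Fourier expansion of $P_{k,N,\mathsf{S}_{c'}}$. These terms are indexed by cosets in the Siegel parabolic $\backslash \Gamma_0(N)$ with nontrivial "bottom row" $(C\;D)$, and each contributes a confluent-hypergeometric / Bessel-type archimedean integral times a generalized Kloosterman-type exponential sum (a Salié/Kloosterman sum attached to the pair of quadratic forms). I would: (i) parametrize the non-identity cosets by their $C$-part, noting that the level forces $N \mid C$ (hence the gain of $N^{-1}$, or better, once one sums the geometric series in $\det C$); (ii) bound the archimedean Bessel integral, using the explicit asymptotics of the relevant $J$-Bessel function of order roughly $k$, to extract the $k^{-2/3}$ saving (this is the familiar phenomenon that Bessel functions $J_{k-c}(x)$ are small unless $x \gtrsim k$, and the stationary-phase/Airy regime gives the exponent $2/3$ rather than the trivial $1/2$); and (iii) bound the exponential sums trivially or via Weil-type estimates, and bound the entries of $\mathsf{S}_c^{L,M}$, whose determinant is $L^2 M^2 \det\mathsf{S}_{c'}$ and whose largest entry is $\ll L M^2$, producing the $(LM)^{k-1/2+\epsilon}$ factor once combined with the Bessel decay. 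The arithmetic factor $(d/4)^{-k+3/2}$ in the statement is precisely the normalizing power of $\det\mathsf{S}_{c'} = d/4$ coming out of $c_{k,T}$.

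The main obstacle I expect is step (ii)--(iii): obtaining the bound that decays simultaneously in $N$ and in $k$, with the correct power $(LM)^{k-1/2+\epsilon}$ and no spurious growth. Tracking the dependence on $k$ through the archimedean integral (to get $k^{-2/3}$, not merely $k^{-1/2}$ or $O(1)$) requires the uniform Bessel asymptotics in both order and argument, and one must be careful that the $(LM)$-dependence entering through the size of the entries of $\mathsf{S}_c^{L,M}$ interacts correctly with the point where the Bessel function transitions from oscillatory to exponentially decaying; getting the threshold right is what separates the bound here from the weaker $N$-dependence in \cite{ChidaKatsuradaMatsumoto2011}. The level aspect itself is comparatively soft: $N \mid C$ in every off-diagonal term, and summing over such $C$ converges and yields the $N^{-1}$ (the convergence of the Poincaré series for $k \geq 6$ is exactly what makes the non-identity terms a genuine error term). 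Once these estimates are assembled, collecting the constants and the $\GL_2(\mathbb{Z})$-equivalence bookkeeping gives the stated identity.
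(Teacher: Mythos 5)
Your plan is essentially the paper's: the proposition is proved by unfolding the Petersson inner product against a weight-$k$, level-$N$ Poincar\'e series $G_{Q,N,k}$ (this is Lemma \ref{poincare-petersson-lemma}), expanding $G_{Q,N,k}$ over the basis $\mathcal{S}_k(N)^*$, and identifying the resulting spectral sum with the $T$-th Fourier coefficient of the Poincar\'e series, which is then estimated (Theorem \ref{bound-on-poincare-fourier-coefficients}). You have also correctly identified the bookkeeping: the $(LM)^{k-1/2+\epsilon}$ comes from $\det\mathsf{S}_c^{L,M}=(LM)^2 d/4$ raised to the power $k/2-1/4+\epsilon$, and $(d/4)^{-k+3/2}$ is the normalizing power from $\det\mathsf{S}_{c'}$.

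However, your sketch of where $k^{-2/3}$ comes from is not quite right, and this is the delicate point of the whole estimate. The uniform Airy-type bound for a single Bessel function only gives $J_{k-3/2}(x)\ll k^{-1/3}$ in the transition regime; it does not by itself yield an exponent $2/3$. The paper stratifies the non-identity cosets of the Poincar\'e series by the rank of the $C$-block in $\left(\begin{smallmatrix} A&B\\ C&D\end{smallmatrix}\right)$ (ranks $0,1,2$, handled by Lemmas \ref{asymptotic-orthogonality-lemma1}--\ref{asymptotic-orthogonality-lemma3}). The rank-one term involves a single Bessel function and, after combining the Airy bound with the length of the range of summation, one gets $N^{-1}k^{-5/6}$. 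The rank-two term involves Kitaoka's matrix Kloosterman sums and an archimedean integral of a \emph{product of two} Bessel functions; the $k^{-2/3}$ in the final bound arises from $(k^{-1/3})^2$ there, and this term is the bottleneck in $k$. Your proposal collapses these two cases and does not mention matrix Kloosterman sums or the principal-divisor parametrization of the rank-two cosets, which are indispensable. Relatedly, the improvement from $N^{-1/2}$ (as in \cite{ChidaKatsuradaMatsumoto2011}) to $N^{-1}$ is precisely the consequence of replacing the trivial $J_\nu(x)\ll x^{-1/2}$ estimate by $J_\nu(x)\ll k^{-1/3}$ in the large-argument range; "summing a geometric series in $\det C$" does not capture this mechanism. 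So while your high-level route is correct and identical to the paper's, the error-term analysis as sketched would not produce the claimed exponents in $k$ and $N$.
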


\noindent We will prove this using estimates for Fourier coefficients of Poincar\'{e} series.  Given $Q \in \mathbb{Q}^{2 \times 2}_{\text{sym}}$ positive definite and semi-integral and a positive even integer $k$, we define the associated Poincar\'{e} series of weight $k$ and level $N$ by
\begin{equation}\label{poincare-def} G_{Q, N, k}(Z) = \sum_{M \in \Delta \backslash \Gamma_0(N)} j(M, Z)^{-k} e(\tr(Q\cdot M\langle Z \rangle)),\end{equation}
where $\Delta = \left\{\left(\begin{smallmatrix} 1_2 & U \\ 0_2 & 1_2 \end{smallmatrix} \right) \in \Sp_4(\mathbb{Z})\right\}$.  This series converges uniformly and absolutely on compact subsets of $\mathbb{H}_2$ provided $k \geq 6$.  The following property of Poincar\'{e} series is well-known, and can be found for the case $N=1$ in for example \cite{Klingen1990}.  We include the argument for any level $N$ here since the value of the constant of proportionality will be important for our application.

\begin{lemma}\label{poincare-petersson-lemma}  Let $Q \in \mathbb{Q}^{2 \times 2}_{\text{sym}}$ be positive definite symmetric, $k \geq 6$ be even, and $N$ be a positive integer.  Let $G_{Q, N, k}$ be defined by (\ref{poincare-def}), and let $f = \sum_{T>0} a(T; f)e(\tr(TZ)) \in \mathcal{S}^{(2)}_k(N)$.  Then
\[\langle G_{Q, N, k}, f \rangle = \frac{2}{\vol(\Gamma_0(N) \backslash \mathbb{H}_2)}\sqrt{\pi} (4\pi)^{3-2k} \Gamma\left(k - \frac{3}{2}\right) \Gamma(k-2) \det(Q)^{-k + \frac{3}{2}}  \overline{a(Q; f)},\]
where $\langle, \rangle$ is the Petersson inner product defined by (\ref{petersson-definition}).\end{lemma}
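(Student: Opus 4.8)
The plan is the standard unfolding of the Poincar\'e series against the Petersson product, with care taken over the constant of proportionality. Write $\phi(Z) = e(\tr(QZ))$ and $(\phi|_k M)(Z) = j(M,Z)^{-k}\phi(M\langle Z\rangle)$, so that $G_{Q,N,k} = \sum_{M\in\Delta\backslash\Gamma_0(N)}\phi|_k M$. First I would record that this is well defined: for $\delta = \left(\begin{smallmatrix} 1_2 & U \\ 0_2 & 1_2\end{smallmatrix}\right)\in\Delta$ one has $j(\delta,Z) = 1$ and $\phi(\delta\langle Z\rangle) = \phi(Z+U) = e(\tr(QU))\,\phi(Z) = \phi(Z)$ because $Q$ is semi-integral and $U$ integral, so $\phi|_k\delta = \phi$; and for $k\geq 6$ the series converges absolutely and locally uniformly (see \cite{Klingen1990}), so $G_{Q,N,k}\in\mathcal{S}_k(N)$ and the interchange of summation with integration performed below is legitimate.

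Next I would unfold. Under the substitution $W = M\langle Z\rangle$ the quantity $(\phi|_k M)(Z)\,\overline{f(Z)}\,\det(\Imm Z)^{k}$ is carried to $\phi(W)\,\overline{f(W)}\,\det(\Imm W)^{k}$ --- here one uses $f|_k M = f$, the identity $\det\Imm(M\langle Z\rangle) = \det(\Imm Z)/|j(M,Z)|^{2}$, and invariance of the measure $\det(\Imm Z)^{-3}\,dX\,dY$ --- so summing over $M\in\Delta\backslash\Gamma_0(N)$ replaces integration over a fundamental domain $\mathcal{F}$ for $\Gamma_0(N)\backslash\mathbb{H}_2$ by integration over $\bigcup_M M\langle\mathcal{F}\rangle$. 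The key bookkeeping point, and the source of the factor $2$, is that $-1_4\in\Gamma_0(N)$ acts trivially on $\mathbb{H}_2$ with $j(-1_4,Z)=1$, whereas $-1_4\notin\Delta$: thus the cosets $\Delta M$ and $\Delta(-M)$ are distinct while $M\langle\mathcal{F}\rangle = (-M)\langle\mathcal{F}\rangle$, so the translates cover a fundamental domain for $\Delta\backslash\mathbb{H}_2$ exactly twice. Hence
\[ \langle G_{Q,N,k}, f\rangle = \frac{2}{\vol(\Gamma_0(N)\backslash\mathbb{H}_2)}\int_{\Delta\backslash\mathbb{H}_2} e(\tr(QZ))\,\overline{f(Z)}\,\det(Y)^{k-3}\,dX\,dY. \]

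To evaluate the last integral I would separate the $X$- and $Y$-variables, using that a fundamental domain for the translation action of $\Delta$ is the set of $Z = X+iY$ with $X = \left(\begin{smallmatrix} x_1 & x_2 \\ x_2 & x_3\end{smallmatrix}\right)$, $x_1,x_2,x_3\in[0,1)$, and $Y>0$. Inserting the Fourier expansion $\overline{f(Z)} = \sum_{T>0}\overline{a(T;f)}\,e(-\tr(TX))\,e^{-2\pi\tr(TY)}$ together with $e(\tr(QZ)) = e(\tr(QX))\,e^{-2\pi\tr(QY)}$, the inner integral $\int_{[0,1)^3} e(\tr((Q-T)X))\,dX$ equals $1$ if $T=Q$ and $0$ otherwise, by orthogonality of additive characters (legitimate since $Q$ and $T$ are semi-integral). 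This leaves
\[ \langle G_{Q,N,k}, f\rangle = \frac{2}{\vol(\Gamma_0(N)\backslash\mathbb{H}_2)}\,\overline{a(Q;f)}\int_{Y>0} e^{-4\pi\tr(QY)}\,\det(Y)^{k-3}\,dY. \]
Finally I would invoke the matrix gamma integral $\int_{Y>0} e^{-\tr(AY)}(\det Y)^{s-3/2}\,dY = \pi^{1/2}\,\Gamma(s)\,\Gamma(s-\tfrac{1}{2})\,(\det A)^{-s}$ (valid for $\Ree(s) > \tfrac{1}{2}$), applied with $A = 4\pi Q$ and $s = k - \tfrac{3}{2}$, so that $s - \tfrac{3}{2} = k-3$ and the convergence condition holds since $k\geq 6$; the remaining integral is then $\pi^{1/2}\,\Gamma(k-\tfrac{3}{2})\,\Gamma(k-2)\,(4\pi)^{3-2k}\,(\det Q)^{3/2-k}$, and substituting yields the claimed identity.

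The only step that genuinely requires care is the unfolding: keeping track of the factor $2$ coming from $-1_4\in\Gamma_0(N)\setminus\Delta$, and justifying (via the absolute convergence available for $k\geq 6$) the exchange of the sum over $\Delta\backslash\Gamma_0(N)$ with the Petersson integral. The $X$-integration and the evaluation of the Siegel gamma integral are routine.
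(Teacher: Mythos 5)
Your proposal is correct and follows essentially the same route as the paper: unfold the Poincar\'{e} series against the Petersson integral, extract the factor $2$ from $-1_4\in\Gamma_0(N)\setminus\Delta$ acting trivially on $\mathbb{H}_2$, integrate out $X$ to pick off $a(Q;f)$, and finish with the Siegel gamma integral.
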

\begin{proof}  Proceeding formally we have
\[\begin{aligned} \langle G_{Q, N, k}, f\rangle &= \frac{1}{\vol(\Gamma_0(N) \backslash \mathbb{H}_2)} \int_{\Gamma_0(N) \backslash \mathbb{H}_2} G_{Q, N, k}(Z) \overline{f(Z)} \det(Y)^{k} \frac{dX dY}{\det(Y)^3} \\
&= \frac{1}{\vol(\Gamma_0(N) \backslash \mathbb{H}_2)} \int_{\Gamma_0(N) \backslash \mathbb{H}_2} \sum_{M \in \Delta \backslash \Gamma_0(N)} j(M, Z)^{-k} e(\tr(Q\cdot M\langle Z\rangle)) \overline{f(Z)} \det(Y)^k \frac{dX dY}{\det(Y)^3}\end{aligned}\]
where $Z = X + iY$.  Now for $M \in \Gamma_0(N)$ we have $\det(\Im(Z))^k \overline{f(Z)} j(M, Z)^{-k} = \overline{f(MZ)}\det(\Im(MZ))^k$, so we can write
\[\begin{aligned} \langle G_{Q, N, k}, f\rangle &= \frac{1}{\vol(\Gamma_0(N) \backslash \mathbb{H}_2)} \int_{\Gamma_0(N) \backslash \mathbb{H}_2} \sum_{M \in \Delta \backslash \Gamma_0(N)}e(\tr(Q\cdot M\langle Z\rangle)) \overline{f(MZ)} \det(\Im(MZ))^k \frac{dX dY}{\det(Y)^3} \\
&= \frac{2}{\vol(\Gamma_0(N) \backslash \mathbb{H}_2)} \int_{\Delta \backslash \mathbb{H}_2} e(\tr(Q Z)) \overline{f(Z)} \det(Y)^k \frac{dX dY}{\det(Y)^3} \\
&= \frac{2}{\vol(\Gamma_0(N) \backslash \mathbb{H}_2)} \int_{Y>0} \int_{X \bmod 1} e(\tr(Q  Z)) \overline{f(Z)} \det(Y)^k \frac{dX dY}{\det(Y)^3}.\end{aligned}\]
Here the integral with respect to $X$ is over $\mathbb{R}^{n \times n}_{\text{sym}}$ with entries taken modulo $1$, and the integral with respect to $Y$ is over all positive definite matrices in $\mathbb{R}^{2 \times 2}_{\text{sym}}$.  The factor of $2$ appears in the second line because $-1_4 \notin \Delta$ but it acts trivially on $\mathbb{H}_2$. Substituting in the Fourier expansion $f(Z) = \sum_{T>0} a(T; f) e(\tr(TZ))$ and integrating with respect to $X$ we see that only the $T=Q$ term survives, giving
\[\langle G_{Q, N, k}, f\rangle = \frac{2\overline{a(Q; f)}}{\vol(\Gamma_0(N) \backslash \mathbb{H}_2)} \int_{Y>0} e^{-4\pi \tr(QY)} \det(Y)^{k-3} dY.\]
It remains to compute this integral.  However, this is well-known (or easily computable by induction), for example by (\cite{Maass1951}, (40)) we have
\[\int_{Y>0} e^{-4\pi \tr(QY)} \det(Y)^{k-3} dY = \sqrt{\pi} (4\pi)^{3-2k} \Gamma\left(k-\frac{3}{2}\right)\Gamma(k-2) \det(Q)^{-k+\frac{3}{2}}.\]\end{proof}

\noindent An immediate corollary of Lemma \ref{poincare-petersson-lemma} is that the $G_{Q, N, k}$ generate $\mathcal{S}_k(N)$ as $Q$ varies.  Thus one can obtain results on the growth of Fourier coefficients of Siegel cusp forms by studying the growth rate Fourier coefficients of Poincar\'{e} series.  Such studies were initiated in \cite{Kitaoka1984}, who considered the dependency on $\det(T)$ only.  In \cite{KowalskiSahaTsimerman2012} a saving with respect to the weight $k$ was obtained, and similarly in \cite{ChidaKatsuradaMatsumoto2011} for the level $N$.  We need a version which saves with both $k$ and $N$.  Our estimations are based on those of \cite{KowalskiSahaTsimerman2012} and in fact obtain a better decay than the $N^{-1/2}$ of \cite{ChidaKatsuradaMatsumoto2011}.  We shall prove:

\begin{theorem}  \label{bound-on-poincare-fourier-coefficients}  Let $k \geq 6$ be even and let $N$ be a positive integer.  Let $Q \in \mathbb{Q}^{2 \times 2}$ be positive definite and semi-integral.  Then for any positive definite semi-integral matrix $T$
\begin{equation}\label{poincare-coefficient-bound} a(T; G_{Q, N, k}) = \delta(T, Q) + E(N, k, T),\end{equation}
where
\[\delta(T, Q) = \#\{U \in \GL_2(\mathbb{Z}); U Q {}^tU = T\}\]
(which may equal zero), and the error term satisfies
\[E(N, k, T) \ll_{\epsilon, Q} N^{-1}k^{-\frac{2}{3}} \det(T)^{k/2- 1/4 + \epsilon}.\]\end{theorem}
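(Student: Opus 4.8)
The plan is to compute the Fourier coefficient $a(T; G_{Q,N,k})$ directly by unfolding the Poincar\'e series. Writing $G_{Q,N,k}(Z) = \sum_{M \in \Delta \backslash \Gamma_0(N)} j(M,Z)^{-k} e(\tr(Q \cdot M\langle Z\rangle))$, one parametrizes the cosets $\Delta \backslash \Gamma_0(N)$ by pairs $(C,D)$ of bottom block rows (with $C \equiv 0 \bmod N$), up to the left action of $\GL_2(\mathbb{Z})$ coming from $\Delta$. The terms with $C = 0$ contribute the ``diagonal'' piece: these are the $M$ that fix the cusp at infinity, and summing $e(\tr(Q \cdot (AZA^{-1} + \text{translation})))$ and picking out the $e(\tr(TZ))$ Fourier mode gives exactly $\delta(T,Q) = \#\{U \in \GL_2(\mathbb{Z}) : UQ\,{}^tU = T\}$ (here one uses that for $\Gamma_0(N)$ the relevant unipotent-stabilizer quotient still ranges over all of $\GL_2(\mathbb{Z})$, since $N$ only constrains $C$). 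The terms with $\det C \neq 0$ make up $E(N,k,T)$, and all the work is in bounding them.

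For the error term I would follow the now-standard route: for each $C$ with $\det C \neq 0$, carry out the integral over $X$ (matrix upper-half-space horocycle) to extract the $T$th coefficient, producing a generalized Kloosterman-type exponential sum $\mathrm{Kl}(T,Q;C)$ times a matrix Bessel/confluent-hypergeometric archimedean integral $\mathcal{I}_k(T,Q;C)$ coming from the $Y$-integral. Thus $E(N,k,T) = \sum_{C} \frac{1}{\det C^{\,?}}\,\mathrm{Kl}(T,Q;C)\,\mathcal{I}_k(T,Q;C)$, where the sum over $C$ is really a sum over $C$ modulo the left $\GL_2(\mathbb{Z})$-action with $C \equiv 0 \bmod N$. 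The archimedean integral is estimated exactly as in Kowalski--Saha--Tsimerman: one gets a factor $\det(T)^{k/2 - 1/4}$ (up to $\det(Q)$-dependent constants, which are allowed since $Q$ is fixed) together with a decay in $\det C$ and, crucially, a gain of $k^{-2/3}$ from a stationary-phase / Bessel-asymptotics analysis in the weight aspect. The congruence $C \equiv 0 \bmod N$ forces $\det C \gg N$ on every nonzero term, and since the resulting $C$-sum (a sum of a geometric-series-like quantity over matrices $C$ of determinant $\gg N$, weighted by Kloosterman sums of size $\ll_\epsilon \det(C)^{1/2+\epsilon}$ by Weil-type bounds, against an archimedean factor decaying like a negative power of $\det C$) converges and is dominated by its smallest term, one extracts the factor $N^{-1}$. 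Combining: $E(N,k,T) \ll_{\epsilon,Q} N^{-1} k^{-2/3} \det(T)^{k/2 - 1/4 + \epsilon}$.

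The main obstacle is getting the $N$-dependence as sharp as $N^{-1}$ (rather than the $N^{-1/2}$ of \cite{ChidaKatsuradaMatsumoto2011}) while \emph{simultaneously} retaining the $k^{-2/3}$ weight-aspect gain of \cite{KowalskiSahaTsimerman2012}. This requires treating the $C$-sum and the archimedean integral together rather than bounding each crudely: one must keep track of how the constraint $N \mid C$ interacts with the support of the archimedean integral (which effectively localizes $\det C$ to a range depending on $k$, $\det T$, $\det Q$) and verify that the $N^{-1}$ is not eaten by the number of available $C$'s in that range, nor does extracting it spoil the stationary-phase estimate. Concretely one splits the $C$-sum dyadically in $\det C$, uses the Weil bound for the Kloosterman sums on each dyadic block, the $k$-uniform Bessel asymptotics for $\mathcal{I}_k$ on each block, and sums the resulting geometric-type series; the smallest allowed block has $\det C \asymp N$, which yields the claimed power of $N$. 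A secondary technical point is checking that the coset-representative bookkeeping for $\Gamma_0(N)$ (as opposed to $\Sp_4(\mathbb{Z})$) does not alter the shape of the Kloosterman sums beyond the congruence condition on $C$; this is routine but must be done carefully so that the constant in Lemma \ref{poincare-petersson-lemma} — on which Proposition \ref{crly:applicable-version} depends — is matched exactly.
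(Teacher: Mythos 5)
Your decomposition of $\Delta\backslash\Gamma_0(N)$ into ``$C=0$'' terms (giving $\delta(T,Q)$) and ``$\det C\neq 0$'' terms (giving the error) misses a whole block: the terms with $C\neq 0$ but $\det C=0$, i.e.\ $\rk C=1$. These are not diagonal terms, they do contribute to the Fourier coefficient, and in fact they are the \emph{bottleneck} for the $N$-dependence. The paper's proof splits the sum into $R_0$, $R_1$, $R_2$ according to $\rk C\in\{0,1,2\}$. For $\rk C=2$ the congruence $C\equiv 0\bmod N$ forces $\abs{\det C}=N^2\abs{\det C'}\geq N^2$, which yields $R_2\ll N^{-2}k^{-2/3}\det(T)^{k/2-1/4+\epsilon}$ --- a stronger $N$-saving than the theorem states and than your heuristic ``$\det C\gg N$'' suggests. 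The $N^{-1}$ in the final bound comes entirely from the rank-1 terms, where the representatives look like $M=\left(\begin{smallmatrix}*&*\\ U^{-1}\left(\begin{smallmatrix}Nc&0\\ 0&0\end{smallmatrix}\right){}^tV & *\end{smallmatrix}\right)$ with $c\geq 1$, and the exponential sums are ordinary one-variable quadratic Gauss sums of modulus $Nc$ rather than Kitaoka's $2\times 2$ matrix Kloosterman sums. So as written your argument has a genuine hole: you would obtain (at best) $N^{-2}$ from the terms you do analyze but would have dropped the dominant $R_1$ contribution entirely, meaning the result ``proved'' would be false for the object under consideration.

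A second, smaller issue: for the archimedean side the paper does not run a stationary-phase analysis from scratch but uses Kitaoka's explicit evaluation of the integral over $S\in\mathbb{Z}^{2\times 2}_{\mathrm{sym}}/\theta(M)$ and $Y>0$ in terms of products of ordinary $J$-Bessel functions $J_{k-3/2}$ (one for each singular value $s_i$ of $P(NC)=T\,{}^t(NC)^{-1}Q(NC)^{-1}$), and then estimates those via the Kowalski--Saha--Tsimerman trichotomy: $J_k(x)\ll x^k/\Gamma(k)$ for small $x$, and $J_k(x)\ll\min(1,x/k)k^{-1/3}$ for $x\geq 1$; the $k^{-2/3}$ saving is exactly the product of two $k^{-1/3}$'s in the rank-2 case $\mathcal{C}_3$ where both $s_1,s_2\gg 1$. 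The rank-1 sum only produces a single Bessel factor, hence the weaker $k^{-5/6}$ there (which is still $\leq k^{-2/3}$). If you want to repair your proposal, you need to (i) reinstate the $\rk C=1$ representatives and the associated Gauss sums, (ii) redo the balance of $N$- and $k$-savings for each rank separately rather than assuming a single uniform $\det C\gg N$, and (iii) note that the clean constant in Lemma \ref{poincare-petersson-lemma} is unaffected since that lemma only unfolds the inner product, not the coefficient.
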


\noindent It is easy to prove Proposition \ref{crly:applicable-version} from Theorem \ref{bound-on-poincare-fourier-coefficients}:

\begin{proof}[Proof of Proposition \ref{crly:applicable-version}]  Since $\mathcal{S}_k(N)^*$ is an orthogonal basis 
\[G_{Q, N, k} = \sum_{f \in \mathcal{S}_k(N)^*} \frac{\langle G_{Q, N, k}, f \rangle}{\langle f, f\rangle} f\]
and hence for any positive definite semi-integral $T \in \mathbb{Q}^{2 \times 2}_{\text{sym}}$ 
\begin{equation}\label{eqn:pp-lemma-on-basis} a(T; G_{Q, N, k}) = \sum_{f \in \mathcal{S}_k(N)^*} \frac{\langle G_{Q, N, k}, f \rangle}{\langle f, f \rangle} a(T; f).\end{equation}
We take $c \in \Cl_d(M)$, $c' \in \Cl_d$, and $T = \mathsf{S}_c^{L, M}$, $Q = \mathsf{S}_{c'}$.  Using $\det(\mathsf{S}_{c'}) = d/4$ we then have
that the right hand side of (\ref{eqn:pp-lemma-on-basis}) is
\[\frac{2}{\vol(\Gamma_0(N) \backslash \mathbb{H}_2)}\sqrt{\pi} (4\pi)^{3-2k} \Gamma\left(k - \frac{3}{2}\right) \Gamma(k-2) \left(\frac{d}{4}\right)^{-k + \frac{3}{2}} \sum_{f \in \mathcal{S}_k(N)^*} \frac{\overline{a(\mathsf{S}_{c'}; f)}a(\mathsf{S}_{c}^{L, M}; f)}{\langle f, f \rangle}.\]
But the left hand side of (\ref{eqn:pp-lemma-on-basis}) is estimated by Theorem \ref{bound-on-poincare-fourier-coefficients}, with error term
\[E(k, N, \mathsf{S}_c^{L, M}) \ll_\epsilon N^{-1}k^{-2/3} \det(\mathsf{S}_c^{L, M})^{\frac{k}{2} - \frac{1}{4} + \epsilon}.\]
One easily computes that $\abs{T} = L^2M^2d/4$, and since $d$ is treated as constant we obtain the statement of the corollary.
\end{proof}

\noindent Before embarking on the proof of Theorem \ref{bound-on-poincare-fourier-coefficients} let us consider whether it is possible to give a simpler qualitative proof of this ``asymptotic orthogonality'' of Poincar\'{e} series Fourier coefficients.  The motivation for this question is the argument in \cite{KowalskiSahaTsimerman2011}, which does precisely this in the $k$-aspect (the proof also works more generally for Siegel modular forms of degree $g$ not necessarily equal to $2$).  A sketch of the proof is as follows: one uses the fact that the Siegel fundamental domain can be characterised by a finite list of conditions along with
\begin{equation}\label{eqn:siegel-limit}\lim_{y \rightarrow \infty} \abs{\det(Cyi1_2 + D)} = +\infty\end{equation}
(valid for $(C, D)$ the bottom block-rows of any real symplectic matrix, where $C \neq 0$) to produce a positive real number $y_0$ and a set $U_g(y_0) = \{X + iy_01_2;\: X \in \mathbb{R}^{2 \times 2}_{\text{sym}};\: \abs{x_{ij}} \leq \frac{1}{2} \}$ such that, for $M = \left(\begin{smallmatrix} A & B \\ C & D \end{smallmatrix}\right) \in \Sp_4(\mathbb{Z})$ with $C \neq 0$ and $Z \in U_g(y_0)$, we have $\abs{j(M, Z)} > 1$.  Since $\abs{e(\tr(Q \cdot M\langle Z \rangle))} \leq 1$ (for any $M, Z$), an application of dominated congergence for series shows that for $Z \in U_g(y_0)$ the Poincar\'{e} series 
\[G_{Q, 1, k}(Z) = \sum_{M \text{ s.t.} C=0} j(M, Z)^{-k}e(\tr(Q\cdot M\langle Z \rangle)) + \sum_{\substack{M \text{ s.t. } C \neq 0}} j(M, Z)^{-k} e(\tr(Q \cdot M\langle Z \rangle))\] 
converges to the sub-series defined by the first sum, as $k \rightarrow \infty$.  On the other hand, using dominated convergence again, one sees that the Fourier coefficients in the limit can be computed by integrating this limiting sub-series over $U_g(y_0)$, and a simple computation (c.f. Lemma \ref{asymptotic-orthogonality-lemma1}) therefore gives the qualitative version of Theorem \ref{bound-on-poincare-fourier-coefficients}.\\

\noindent In order to extend this argument to deal with the case $N>1$ one can argue as in the proof of Proposition 2 of \cite{KowalskiSahaTsimerman2011}.  Let $U_g(Y_0)$ be as above and write
\[G_{Q, N, k}(Z) = \sum_{M \text{ s.t.} C=0} j(M, Z)^{-k} e(tr(Q \cdot M\langle Z \rangle)) + \sum_{M \text{ s.t.} C \neq 0} \Delta_N(C) j(M, Z)^{-k} e(\tr(Q \cdot M \langle Z \rangle)) \]
where $\Delta_N(C) = 1$ if $C \equiv 0_2 \bmod N$, and is zero otherwise.  The limit of each term in the second series is $0$ as $N + k \rightarrow \infty$: indeed the large $k$ limit was treated above, and $\Delta_N(C) = 0$ for $N$ sufficiently large.  Thus by dominated convergence for series we again reduce to the first sum, and can then argue as above.\\

\noindent  The remainder of this section is occupied with the (somewhat technical) proof of Theorem \ref{bound-on-poincare-fourier-coefficients}.  We will treat $Q$ as being fixed, and will therefore suppress the dependency of implied constants on $Q$.  To ease notation we will also write $\abs{\cdot}$ for $\det(\cdot)$.\\

\noindent  Now let $\mathfrak{h}_N$ be a complete set of representatives for $\Delta \backslash \Gamma_0(N) / \Delta$.  For $M \in \Gamma_0(N)$, let
\[\theta(M) = \left\{S \in \mathbb{Z}^{2 \times 2}_{\text{sym}};\: M\left(\begin{matrix} 1_2 & S \\ 0_2 & 1_2 \end{matrix}\right)M^{-1} \in \Delta\right\}.\]
Note that $\mathbb{Z}_{\text{sym}}^{2 \times 2}/\theta(M)$ is in bijection with $\Delta/(\Delta \cap M^{-1} \Delta M)$; we will identify these.  We then clearly have
\begin{equation}\label{decomposition-of-DeltaMDelta} \Delta M \Delta = \bigsqcup_{S \in \mathbb{Z}^{2 \times 2}_{\text{sym}} / \theta(M)} \Delta M \left(\begin{matrix} 1_2 & S \\ 0_2 & 1_2 \end{matrix}\right).\end{equation}
Define $H(M, \cdot) = H_{Q, N, k}(M, \cdot)$ by
\begin{equation} H(M, Z) = \sum_{S \in \mathbb{Z}^{2 \times 2}_{\text{sym}} / \theta(M)} j(M, Z+S)^{-k}e(\tr(Q\cdot M\langle Z+S\rangle))\end{equation}
so that by  (\ref{decomposition-of-DeltaMDelta})
\begin{equation}\label{poincare-in-double-cosets} G(Z) = \sum_{M \in \mathfrak{h}_N} H(M, Z)\end{equation}
where $G = G_{Q, N, k}$.  Let $h(M, T) = h_{Q, N, k}(M, T)$ be given by $h(M, T) = a(T; H(M, \cdot))$.  Then by (\ref{poincare-in-double-cosets}) we have
\begin{equation}\label{poincare-fc-in-double-cosets} a(T; G) = \sum_{M \in \mathfrak{h}_N} h(M, T).\end{equation}
In order to estimate (\ref{poincare-fc-in-double-cosets}) we split the sum over the subsets
\[\mathfrak{h}_N^{(i)} = \left\{M = \left(\begin{matrix} A & B \\ C & D \end{matrix}\right) \in \mathfrak{h}_N;\: \rk(C) = i\right\}\]
by defining, for $i=0, 1, 2$,
\begin{equation}\label{poincare-fc-summands} R_i = \sum_{M \in \mathfrak{h}_N^{(i)}} h(M, T).\end{equation}
In Lemmas \ref{asymptotic-orthogonality-lemma1}, \ref{asymptotic-orthogonality-lemma2}, and \ref{asymptotic-orthogonality-lemma3} we shall treat the cases $i=0$, $i=1$, and $i=2$ respectively.

\begin{lemma}\label{asymptotic-orthogonality-lemma1}  In the notation of (\ref{poincare-fc-summands}),
\[R_0 = \#\left\{U \in \GL_2(\mathbb{Z});\: U Q {}^tU = T\right\}\]
\end{lemma}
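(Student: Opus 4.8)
\noindent The plan is to compute $R_0$ by making the set $\mathfrak{h}_N^{(0)}$ of double cosets completely explicit and observing that each contributes exactly one, easily identified, Fourier coefficient.

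First I would describe $\mathfrak{h}_N^{(0)}$. If $M = \left(\begin{smallmatrix} A & B \\ C & D\end{smallmatrix}\right) \in \Gamma_0(N)$ has $\rk(C) = 0$ then $C = 0$, and the symplectic relations then force $A \in \GL_2(\mathbb{Z})$, $D = {}^tA^{-1}$ and $A^{-1}B$ symmetric, so that $M = M_A\left(\begin{smallmatrix} 1_2 & S \\ 0_2 & 1_2\end{smallmatrix}\right)$ with $M_A = \left(\begin{smallmatrix} A & 0_2 \\ 0_2 & {}^tA^{-1}\end{smallmatrix}\right)$ and $S = A^{-1}B \in \mathbb{Z}^{2\times 2}_{\text{sym}}$. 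Since $\left(\begin{smallmatrix} 1_2 & S \\ 0_2 & 1_2\end{smallmatrix}\right) \in \Delta$, every such $M$ lies in $\Delta M_A \Delta$; conversely the top-left block is unchanged under left or right multiplication by elements of $\Delta$, so $\Delta M_A \Delta$ determines $A$. Hence $A \mapsto \Delta M_A \Delta$ is a bijection from $\GL_2(\mathbb{Z})$ onto $\mathfrak{h}_N^{(0)}$, and I may take $\{M_A : A \in \GL_2(\mathbb{Z})\}$ as the representatives.

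Next I would evaluate $H(M_A, \cdot)$ and its $T$-th Fourier coefficient. One computes $M_A\left(\begin{smallmatrix} 1_2 & S \\ 0_2 & 1_2\end{smallmatrix}\right)M_A^{-1} = \left(\begin{smallmatrix} 1_2 & AS\,{}^tA \\ 0_2 & 1_2\end{smallmatrix}\right)$, which lies in $\Delta$ for every $S \in \mathbb{Z}^{2\times 2}_{\text{sym}}$ because $A \in \GL_2(\mathbb{Z})$; thus $\theta(M_A) = \mathbb{Z}^{2\times 2}_{\text{sym}}$ and the sum defining $H(M_A, Z)$ collapses to the single term $j(M_A, Z)^{-k} e(\tr(Q\cdot M_A\langle Z\rangle))$. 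Here $j(M_A, Z) = \det({}^tA^{-1}) = \det(A)^{-1} = \pm 1$, so $j(M_A, Z)^{-k} = 1$ since $k$ is even, and $M_A\langle Z\rangle = AZ\,{}^tA$, whence $\tr(Q\cdot M_A\langle Z\rangle) = \tr({}^tAQA\cdot Z)$. Therefore $H(M_A, Z) = e(\tr({}^tAQA\cdot Z))$; since ${}^tAQA$ is again positive definite and semi-integral, $h(M_A, T) = a(T; H(M_A, \cdot))$ equals $1$ if ${}^tAQA = T$ and $0$ otherwise.

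Summing over the representatives then gives $R_0 = \#\{A \in \GL_2(\mathbb{Z}) : {}^tAQA = T\}$, which equals $\#\{U \in \GL_2(\mathbb{Z}) : UQ\,{}^tU = T\} = \delta(T,Q)$ via the involution $A \mapsto {}^tA$ of $\GL_2(\mathbb{Z})$. The only point that needs care is the bookkeeping in the first step: I must check that the top-left block is a complete invariant for $\Delta$-double cosets among matrices with $C = 0$, so that distinct $A$ (in particular $\pm 1_2$) give genuinely distinct double cosets and nothing is over- or under-counted. Once that is pinned down, the remaining steps are direct, short computations.
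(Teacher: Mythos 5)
Your proof is correct, and it supplies the computation that the paper itself delegates to a reference (the paper's proof of this lemma is literally the single line ``Straightforward computation (see \cite{ChidaKatsuradaMatsumoto2011} Proposition 3.2)''). The route you take — parametrizing the rank-zero double cosets by the top-left block $A \in \GL_2(\mathbb{Z})$, observing that $\theta(M_A)$ is all of $\mathbb{Z}^{2\times 2}_{\text{sym}}$ so the sum defining $H(M_A,\cdot)$ collapses to a single exponential, computing $j(M_A,Z)^{-k}=1$ using $k$ even, and reading off the $T$-th coefficient as $\delta_{T,{}^tAQA}$ before converting $A\mapsto{}^tA$ to match the stated form $UQ\,{}^tU=T$ — is exactly the standard argument one would find in the cited proposition.
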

\begin{proof}  Straightfoward computation (see \cite{ChidaKatsuradaMatsumoto2011} Proposition 3.2).  \end{proof}

\begin{lemma}\label{asymptotic-orthogonality-lemma2}  Let $\epsilon > 0$.  In the notation of (\ref{poincare-fc-summands}),
\[\abs{R_1} \ll_\epsilon N^{-1} k^{-\frac{5}{6}} \abs{T}^{\frac{k}{2}-\frac{1}{4}+\epsilon}.\]  \end{lemma}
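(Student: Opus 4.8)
The plan is to adapt the circle-method analysis of Fourier coefficients of Siegel Poincar\'e series of \cite{Kitaoka1984}, \cite{KowalskiSahaTsimerman2012} \S6 and \cite{ChidaKatsuradaMatsumoto2011} to the double cosets whose lower-left block has rank exactly one, while tracking the dependence on \emph{both} $k$ and $N$.

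First I would make precise the parametrisation of $\mathfrak{h}_N^{(1)}$ underlying (\ref{poincare-fc-summands}): using standard reduction theory for $\Delta \backslash \Gamma_0(N) / \Delta$ one reduces to summing, over each positive integer $c$ with $N \mid c$, over a controlled set of cosets whose lower-left block $C$ is equivalent on both sides (under $\GL_2(\mathbb{Z})$) to $\diag(c, 0)$, with residual data given by residues of the entries of $A$ and $D$ modulo $c$ subject to the symplectic relations and the identification modulo $\theta(M)$. Counting these classes is the fiddly bookkeeping step, and the count should be $\ll c^{1+\epsilon}$.

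Next, for such an $M$, I would compute $h(M, T) = a(T; H(M, \cdot))$ by inserting the Fourier expansion and integrating over $X \bmod 1$. The sum over $S \in \mathbb{Z}^{2 \times 2}_{\text{sym}} / \theta(M)$ unfolds the $X$-integral in the ``active'' direction to an integral over all of $\mathbb{R}$, while in the ``degenerate'' direction it reduces to lower-dimensional arithmetic data; what remains is, exactly as in \cite{KowalskiSahaTsimerman2012} and \cite{ChidaKatsuradaMatsumoto2011}, a one-variable generalised Gauss/Kloosterman sum modulo $c$ times a one-dimensional oscillatory integral in $Y$. The latter evaluates to a Bessel function $J_{k - 3/2}$ (the one-dimensional analogue of the integral computed in Lemma \ref{poincare-petersson-lemma}) with argument of size $\asymp \sqrt{\det Q \, \det T}/c$, up to the normalising powers of $c$.

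Then I would estimate: the Weil bound gives $\ll c^{1/2 + \epsilon}$ for the exponential sum, and for the Bessel factor I would split the $c$-sum at the transition point $c \asymp \sqrt{\det T}/k$, using the uniform bound $J_\nu(x) \ll \nu^{-1/3}$ below it and the rapidly decaying small-argument bound above it. Assembling these with the residue count, the normalising powers of $c$ inherited from Lemma \ref{poincare-petersson-lemma}, and the summation over $c \equiv 0 \bmod N$, the series converges for $k \geq 6$, the constraint $c \geq N$ yields the factor $N^{-1}$, the combination of the uniform Bessel bound with the restricted effective length of the $c$-sum yields $k^{-5/6}$, and tracking the (constant) power of $\det Q$ against that of $\det T$ yields the factor $\abs{T}^{\frac{k}{2} - \frac14 + \epsilon}$. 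The last paragraph is where the work is: obtaining the exponent $5/6$ rather than a weaker power of $k$ needs the precise interplay between the uniform Bessel estimate in the transition range and the effective length of the $c$-sum, and extracting the level saving $N^{-1}$ (rather than the $N^{-1/2}$ of \cite{ChidaKatsuradaMatsumoto2011}) depends both on using that the single elementary divisor of $C$ is a multiple of $N$ and on not being wasteful in the residue count. The parametrisation step, while standard, must be carried out carefully so that no extraneous factors of $N$ or $c$ enter.
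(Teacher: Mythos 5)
Your overall strategy matches the paper's: Kitaoka-style parametrisation of the rank-one double cosets, unfolding the $X$-integral against the Fourier expansion to produce a quadratic Gauss sum and a $J_{k-3/2}$ Bessel factor, then a dyadic split of the modulus sum with the small-argument bound on one side and the uniform bound $J_\nu(x)\ll\nu^{-1/3}$ on the other. However, the split point you propose is wrong, and as written the argument has a genuine gap. You split the $c$-sum at $c\asymp\sqrt{\det T}/k$, i.e.\ at Bessel argument $x\asymp k$, and propose to use the uniform bound for $c$ below this and the ``rapidly decaying small-argument bound'' for $c$ above it. But $J_\nu(x)\ll (x/2)^\nu/\Gamma(\nu+1)$ is decaying only for $x\lesssim\sqrt{\nu}$; for $\sqrt{\nu}\lesssim x\lesssim\nu$ it is an exponentially \emph{large} quantity (at $x\asymp\nu$ it is of size $e^\nu\nu^{-1/2}$ by Stirling), so it gives no control at all on the range $\sqrt{\det T}/k\lesssim c\lesssim\sqrt{\det T}/\sqrt{k}$, which is precisely the dominant range. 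Moreover your arithmetic is inconsistent with your own split: with the uniform bound in force for $c\lesssim\sqrt{\det T}/(Nk)$ the sum length yields $k^{-1-\epsilon}$, and multiplied by $k^{-1/3}$ this is $k^{-4/3}$, not the $k^{-5/6}$ you claim. The paper instead splits at $x\asymp\sqrt{k}$ (i.e.\ $c\asymp\sqrt{\det T}/(N\sqrt{k})$); the region $x\geq\sqrt{k}$ is then handled with the uniform bound, the effective sum length contributes $\ll(\sqrt{\det T}/(N\sqrt{k}))^{1+\epsilon}$, and this is where the $N^{-1}k^{-1/3-1/2}=N^{-1}k^{-5/6}$ saving comes from; a further split at $x\asymp 1$ is used only to make the tail sum converge, which is a minor point. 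With the split at $\sqrt{k}$, the small-argument bound is in its valid rapidly-decaying regime $x\leq\sqrt{k}$ throughout, and that side contributes $\ll N^{-1}k^{-E}|T|^{1/2+\epsilon}$ for every $E$. So the fix is a single $\sqrt{\cdot}$, but as stated your argument does not close.

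One secondary imprecision: the residue bookkeeping is not simply a count $\ll c^{1+\epsilon}$ of classes mod $c$. After summing the Gauss sum in the off-diagonal $d_2$, one must still sum over the $\GL_2(\mathbb{Z})$ cosets $U$, $V$ that appear in the parametrisation, and these produce an auxiliary integer $m$ together with representation numbers $r(m;Q)$, $r(m;T)$; the relevant constraint then takes the form $mc\lesssim(\text{threshold})$, and it is the joint length of this $(m,c)$-sum that drives the final exponents of $k$ and $N$. Glossing over the $m$-sum risks losing the factor $N^{-1}$ and the precise exponent of $|T|$.
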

\begin{proof}  We choose our representatives in $\mathfrak{h}_N^{(1)}$ to be of the form
\[M = \left(\begin{matrix} * & * \\ U^{-1}\left(\begin{smallmatrix} Nc & 0 \\ 0 & 0 \end{smallmatrix}\right){}^tV & U^{-1}\left(\begin{smallmatrix} d_1 & d_2 \\ 0 & d_4 \end{smallmatrix}\right)V^{-1}\end{matrix}\right)\]
where
\[\begin{aligned} &U \in \left\{\left(\begin{matrix} * & * \\ 0 & * \end{matrix}\right) \in \GL_2(\mathbb{Z})\right\} \backslash \GL_2(\mathbb{Z}), \\
&V \in \GL_2(\mathbb{Z})/\left\{\left(\begin{matrix} 1 & * \\ 0 & * \end{matrix}\right) \in \GL_2(\mathbb{Z})\right\},\end{aligned}\]
$c \geq 1$, $d_4 = \pm 1$, $(Nc, d_1)=1$ and $d_1, d_2$ vary modulo $Nc$.  For such an $M$, we have
\[\theta(M) = \left\{S \in \mathbb{Z}^{2 \times 2}_\text{sym};\: {}^tVSV = \left(\begin{matrix} 0 & 0 \\ 0 & * \end{matrix}\right)\right\}.\]
When $N=1$ the set of such $M$ are the representatives for $\mathfrak{h}_1^{(1)}$ used by Kitaoka.  It easily follows that we have a complete set of representatives when $N>1$ as well.  These are also the representatives used in \cite{ChidaKatsuradaMatsumoto2011}.\begin{footnote}{Note that there is a typo in the statement of Lemma 4.1 of \cite{ChidaKatsuradaMatsumoto2011}: the conditions on $d_1, d_2$ should be as we have stated them (modulo $Nc_1$ in their notation), and following this their $a_1$ should also be an inverse modulo $Nc_1$.  However, during the subsequent computations the variables are taken in the correct ranges, so this does not affect the results of their computation.  In the statement of \cite{ChidaKatsuradaMatsumoto2011} Lemma 4.2 the last term in the exponential should have $-d_4$ in place of $d_4$, but this again has no effect.}\end{footnote}\\

\noindent  Consider now a fixed $M$ as above.  Let $P = \left(\begin{smallmatrix} p_1 & p_2/2 \\ p_2/2 & p_4 \end{smallmatrix}\right) = UQ{}^tU$, $S = \left(\begin{smallmatrix} s_1 & s_2/2 \\ s_2/2 & s_4 \end{smallmatrix}\right) = V^{-1} T {}^tV^{-1}$, and let $a_1$ be any integer such that $a_1 d_1 \equiv 1 \bmod Nc$.  By the discussion in the previous paragraph, we can apply \cite{Kitaoka1984} \S3 Lemma 1 to our representatives (a subset of Kitaoka's), which gives
\begin{equation} \label{kitaoka-R1-lemma} \begin{aligned}h(M, T) &= \delta_{p_4, s_4}(-1)^{\frac{k}{2}} \sqrt{2} \pi \left(\frac{\abs{T}}{\abs{Q}}\right)^{\frac{k}{2}-\frac{3}{4}}  s_4^{-\frac{1}{2}} (Nc)^{-\frac{3}{2}} J_{k-\frac{3}{2}}\left(4\pi \frac{\sqrt{\abs{T}\abs{Q}}}{Nc s_4}\right) \\
&\qquad\qquad\times e\left(\frac{a_1 s_4 d_2^2 - (a_1d_4 p_2 - s_2)d_2}{Nc} + \frac{a_1 p_1 + d_1 s_1}{Nc} - \frac{d_4 p_2 s_2}{2Nc s_4}\right),\end{aligned}\end{equation}
where $\delta$ is the Kronecker delta, and $J$ is the ordinary Bessel function.  As in \cite{Kitaoka1984} and \cite{ChidaKatsuradaMatsumoto2011} we sum (\ref{kitaoka-R1-lemma}) over $d_2 \bmod Nc$, using the well-known bound on quadratic Gauss sums 
\[\sum_{x \bmod c} e\left(\frac{ax^2 + bx}{c}\right) \ll (a,c)^{\frac{1}{2}}c^\frac{1}{2}\] 
(see \cite{ChidaKatsuradaMatsumoto2011} for a proof) for the first term in the exponential sum, and bounding the other two in absolute value by $1$ to get
\[\abs{\sum_{d_2 \bmod Nc} h(M, T)} \ll \delta_{p_4, s_4} \left(\frac{\abs{T}}{\abs{Q}}\right)^{\frac{k}{2}-\frac{3}{4}} s_4^{-\frac{1}{2}} (s_4, Nc)^{\frac{1}{2}}(Nc)^{-1} \abs{J_{k-\frac{3}{2}} \left(\frac{4\pi \sqrt{\abs{T}\abs{Q}}}{N c s_4}\right)}.\]
Now sum this over $d_1 \bmod Nc$ such that $(d_1, Nc)=1$, and $d_4 = \pm 1$.  Since the sum over $d_1$ has length $O(Nc)$ we have
\[\sum_{d_1, d_4} \abs{\sum_{d_2} h(M, T)} \ll \delta_{p_4, s_4} \left(\frac{\abs{T}}{\abs{Q}}\right)^{\frac{k}{2}-\frac{3}{4}} s_4^{-\frac{1}{2}} (s_4, Nc)^{\frac{1}{2}} \abs{J_{k-\frac{3}{2}}\left(\frac{4\pi \sqrt{\abs{T}\abs{Q}}}{Ncs_4}\right)}.\]
We next sum this over all possible $U$ and $V$ for a fixed $c$.  Let us write $U = \left(\begin{smallmatrix} * & * \\ u_3 & u_4 \end{smallmatrix}\right)$.  By definition of our choice of $M$, the choice of $(u_3, u_4)$ determines $U$ up to sign.  Note also that, writing $u = \left(\begin{smallmatrix} u_3 \\ u_4 \end{smallmatrix}\right)$, $p_4 = Q[u]$.  So  
\[\sum_U \sum_{d_1, d_4,} \abs{\sum_{d_2} h(M, T)} \ll r(s_4; Q)\left(\frac{\abs{T}}{\abs{Q}}\right)^{\frac{k}{2}-\frac{3}{4}} s_4^{-\frac{1}{2}} (s_4, Nc)^{\frac{1}{2}} \abs{J_{k-\frac{3}{2}}\left(\frac{4\pi \sqrt{\abs{T}\abs{Q}}}{N s_4 c}\right)}\]
where $r(s_4; Q) = \abs{\{\left(\begin{smallmatrix} u_3 \\ u_4 \end{smallmatrix}\right) \in \mathbb{Z}^{2 \times 2};\: (u_3, u_4) = 1;\: Q[u] = s_4\}}$.  Similarly, write $V = \left(\begin{smallmatrix} v_1 & * \\ v_2 & * \end{smallmatrix}\right)$.  The choice of $(v_1, v_2)$ determines $V$, by the definition of our representatives $M$.  Summing over all $(v_1, v_2)$ such that $\gcd(v_1, v_2)=1$ we get
\[\sum_{U, V} \sum_{d_1, d_4} \abs{\sum_{d_2} h(M, T)} \ll \sum_{m \geq 1} r(m; T) r(m; Q) \left(\frac{\abs{T}}{\abs{Q}}\right)^{\frac{k}{2}-\frac{3}{4}} m^{-\frac{1}{2}} (m, Nc)^{\frac{1}{2}} \abs{J_{k-\frac{3}{2}} \left(\frac{4\pi \sqrt{\abs{T}\abs{Q}}}{N m c}\right)}.\]
Now it is well known that the number of proper representations of $m$ by a primitive positive definite quadratic form is $\ll_\eta m^{\eta}$, for any $\eta > 0$.  Applying this with $\eta/2$ we have
\[\sum_U \sum_{d_1, d_4,} \abs{\sum_{d_2} h(M, T)} \ll_\eta \sum_{\substack{ m \geq 1}} \left(\frac{\abs{T}}{\abs{Q}}\right)^{\frac{k}{2}-\frac{3}{4}} m^{-\frac{1}{2}+\eta} (m, Nc)^{\frac{1}{2}} \abs{J_{k-\frac{3}{2}} \left(\frac{4\pi \sqrt{\abs{T}\abs{Q}}}{N m c}\right)}.\]
Finally we sum over $c \geq 1$ to get, for any $\eta>0$,
\begin{equation}\label{rank1-summand-basic-bound} \abs{R_1} \ll_\eta \left(\frac{\abs{T}}{\abs{Q}}\right)^{\frac{k}{2}-\frac{3}{4}} \sum_{c, m \geq 1} m^{-\frac{1}{2} + \eta} (m, Nc)^{\frac{1}{2}} \abs{J_{k-\frac{3}{2}}\left(\frac{4\pi \sqrt{\abs{T}\abs{Q}}}{Nmc}\right)}.\end{equation}
As in \cite{KowalskiSahaTsimerman2012} we split the sum on the right hand side of (\ref{rank1-summand-basic-bound}) up in to $R_{11} + R_{12} + R_{13}$, but where $R_{1i}$ now corresponds to
\[\begin{cases} \frac{4\pi \sqrt{\abs{T}\abs{Q}}}{N} \leq mc & \text{if }i=1, \\
\frac{4\pi \sqrt{\abs{T}\abs{Q}}}{\sqrt{k}N} \leq mc \leq \frac{4\pi \sqrt{\abs{T}\abs{Q}}}{N} & \text{if }i=2, \\
mc \leq \frac{4\pi \sqrt{\abs{T}\abs{Q}}}{\sqrt{k}N} & \text{if }i=3. \end{cases}\]
So by definition we have
\begin{equation} \label{R1-split} \abs{R_1} \ll_\eta \left(\frac{\abs{T}}{\abs{Q}}\right)^{\frac{k}{2}-\frac{3}{4}} (R_{11} + R_{12} + R_{13}),\end{equation}
and we proceed to estimate each $R_{1i}$ individually. \\

\noindent \textbf{Case $R_{11}$:}  We are estimating 
\[R_{11} = \sum_{\substack{c, m \geq 1 \\ mc \geq \frac{4\pi \sqrt{\abs{T}\abs{Q}}}{N}}} m^{-\frac{1}{2}+\eta} (m, Nc)^{\frac{1}{2}} \abs{J_{k-\frac{3}{2}}\left(\frac{4\pi \sqrt{\abs{T}\abs{Q}}}{Nmc}\right)}.\]
In this range we use the estimate
\begin{equation}\label{bessel-estimate-small-x} J_k(x) \ll \frac{x^k}{\Gamma(k)},\:\:\:\:\text{if }k \geq 1,\: 0 \leq x \ll \sqrt{k+1}, \end{equation}  
(i.e. \cite{KowalskiSahaTsimerman2012} (3.1.3)), to get
\[J_{k-\frac{3}{2}}\left(\frac{4\pi \sqrt{\abs{T}\abs{Q}}}{Nmc}\right) \ll \frac{1}{\Gamma(k-\frac{3}{2})}\left(\frac{4\pi \sqrt{\abs{T}\abs{Q}}}{N m c}\right)^{k-\frac{3}{2}}.\]
Substituting this in to $R_{11}$ gives
\[R_{11} \ll \frac{1}{\Gamma\left(k - \frac{3}{2}\right)} \sum_{\substack{m, c \geq 1 \\ mc \geq \frac{4\pi \sqrt{\abs{T}\abs{Q}}}{N}}} m^{-\frac{1}{2} + \eta} (m, Nc)^{\frac{1}{2}} \left(\frac{4\pi \sqrt{\abs{T}\abs{Q}}}{Nmc}\right)^{k-\frac{3}{2}}.\]
Since $(4\pi \sqrt{\abs{T}\abs{Q}})/(Nmc) \leq 1$ and $k \geq 6$ we can replace the exponent $k-\frac{3}{2}$ with $1 + \delta$, where $0 < \delta \leq 1$.  Doing this, and putting $\pi$ and $d$ in to the implied constant, we get
\[R_{11} \ll  \frac{N^{-1-\delta}\abs{T}^{\frac{1}{2}+\frac{\delta}{2}}}{\Gamma(k-\frac{3}{2})} \sum_{\substack{m, c \geq 1 \\ mc \geq \frac{4\pi \sqrt{\abs{T} \abs{Q}}}{N}}} m^{-\frac{1}{2} + \eta} (m, Nc)^{\frac{1}{2}} \left(\frac{1}{mc}\right)^{1+\delta} \]
Taking $\delta = 2\eta$ (assuming $\eta$ is sufficiently small) and using $(m, Nc)^{\frac{1}{2}} \leq m^{\frac{1}{2}}$ in the double sum gives
\[ \sum_{\substack{m, c \geq 1 \\ mc \geq \frac{4\pi \sqrt{\abs{T} \abs{Q}}}{N}}} m^{-\frac{1}{2}+\eta} (m, Nc)^{\frac{1}{2}}\left(\frac{1}{mc}\right)^{1+2\eta} \leq \sum_{\substack{m, c \geq 1 \\ mc \geq \frac{4\pi \sqrt{\abs{T}\abs{Q}}}{N}}} m^{-1-\eta} c^{-1-2\eta}\]
which is manifestly convergent.  Thus we have
\[R_{11} \ll \frac{N^{-1-2\eta} \abs{T}^{\frac{1}{2}+\eta} }{\Gamma\left(k-\frac{3}{2}\right)}.\]
Since the gamma function grows superexponentially we have $\Gamma(k-\frac{3}{2}) \gg k^{E}$ for any $E \geq 1$, so for any such $E$
\begin{equation}\label{R11-final-bound} R_{11} \ll_\eta N^{-1} k^{-E} \abs{T}^{\frac{1}{2}+\eta}.\end{equation}

\noindent \textbf{Case $R_{12}$:}  We are now estimating
\begin{equation}\label{R12-def} R_{12} = \sum_{\substack{m, c \geq 1 \\ \frac{4 \pi \sqrt{\abs{T} \abs{Q}}}{N \sqrt{k}} \leq mc \leq \frac{4 \pi \sqrt{\abs{T} \abs{Q}}}{N}}} m^{-\frac{1}{2} + \eta} (m, Nc)^{\frac{1}{2}} \abs{J_{k-\frac{3}{2}} \left(\frac{4 \pi \sqrt{\abs{T} \abs{Q}}}{Nmc}\right)}.\end{equation}
In this range we can still use the estimate (\ref{bessel-estimate-small-x}).  This, together with $\frac{4 \pi \sqrt{\abs{T} \abs{Q}}}{Nmc} \leq \sqrt{k}$ (in this range), gives
\[\begin{aligned} J_{k-\frac{3}{2}}\left(\frac{4 \pi \sqrt{\abs{T} \abs{Q}}}{Nmc}\right) &\ll \frac{1}{\Gamma(k-\frac{3}{2})} \left(\frac{4 \pi \sqrt{\abs{T} \abs{Q}}}{Nmc}\right)^{k-\frac{3}{2}} \\
&\ll \frac{k^{\frac{k}{2}-\frac{3}{4}}}{\Gamma(k-\frac{3}{2})}.\end{aligned}\]
Substituting this in to (\ref{R12-def}) we have
\begin{equation} \label{R12-post-bessel-bound} R_{12} \ll \frac{k^{\frac{k}{2}-\frac{3}{4}}}{\Gamma(k-\frac{3}{2})} \sum_{\substack{m, c \geq 1 \\ \frac{4 \pi \sqrt{\abs{T} \abs{Q}}}{N\sqrt{k}} \leq mc \leq \frac{4 \pi \sqrt{\abs{T} \abs{Q}}}{N}}} m^{-\frac{1}{2}+\eta} (m, Nc)^{\frac{1}{2}}.\end{equation}
Now we can easily see that, for any $\delta > 0$,
\begin{equation} \label{gcd-sum-bound} \begin{aligned} \sum_{\substack{m, c \geq 1 \\ mc \leq X}} m^{-\frac{1}{2} + \eta}(m, Nc)^{\frac{1}{2}} &= \sum_{r \leq X} \sum_{e \mid r} e^{-\frac{1}{2} + \eta} \left(e, \frac{Nr}{e}\right)^{\frac{1}{2}} \ll_\delta X^{1 + \eta + \delta}. \end{aligned}\end{equation}
Taking $X = \frac{4 \pi \sqrt{\abs{T} \abs{Q}}}{N}$ we can bound the sum in (\ref{R12-post-bessel-bound}), with $\delta = \eta$ this gives
\[R_{12} \ll_\eta \frac{k^{\frac{k}{2}-\frac{3}{4}}}{\Gamma(k-\frac{3}{2})} \left(\frac{\sqrt{\abs{T}}}{N}\right)^{1+2\eta}.\] 
Using Stirling's formula we see that, for any $E \geq 1$, $\frac{k^{\frac{k}{2}-\frac{3}{4}}}{\Gamma(k-\frac{3}{2})} \gg k^E$, so for any $E \geq 1$, $\eta>0$ we have
\begin{equation}\label{R12-final-bound} R_{12} \ll_\eta N^{-1}k^{-E} \abs{T}^{\frac{1}{2}+\eta}.\end{equation}

\noindent \textbf{Case $R_{13}$:}  Finally we consider
\begin{equation}\label{R13-def} R_{13} = \sum_{\substack{m, c \geq 1 \\ mc \leq \frac{4 \pi \sqrt{\abs{T}\abs{Q}}}{N \sqrt{k}}}} m^{-\frac{1}{2}+\eta} (m, Nc)^{\frac{1}{2}} \abs{J_{k-\frac{3}{2}}\left(\frac{4 \pi \sqrt{\abs{T}\abs{Q}}}{N m c}\right)}. \end{equation}
Here we use\begin{equation}\label{bessel-estimate-large-x} J_k(x) \ll \min(1, xk^{-1})k^{-\frac{1}{3}}, \:\:\:\:\text{if }k \geq 1,\: x \geq 1 \end{equation}
(i.e. \cite{KowalskiSahaTsimerman2012} (3.1.4)).  By definition of this range $\frac{4 \pi \sqrt{\abs{T}\abs{Q}}}{Nmc} \geq \sqrt{k} \geq 1$, so (\ref{bessel-estimate-large-x}) is applicable and gives
\[J_{k-\frac{3}{2}}\left(\frac{4\pi \sqrt{\abs{T} \abs{Q}}}{N m c}\right) \ll \left(k-\frac{3}{2}\right)^{-\frac{1}{3}} \ll k^{-\frac{1}{3}}.\]
Substituting this in to (\ref{R13-def}) we get
\[ R_{13} \ll k^{-\frac{1}{3}} \sum_{\substack{c, m \geq 1 \\ cm \leq \frac{4 \pi \sqrt{\abs{T}\abs{Q}}}{ N\sqrt{k}}}} m^{-\frac{1}{2}+\eta} (m, Nc)^{\frac{1}{2}} \]
Using (\ref{gcd-sum-bound}) with $X = \frac{4 \pi \sqrt{\abs{T}\abs{Q}}}{N \sqrt{k}}$ and $\delta = \eta$ gives
\begin{equation}\label{R13-final-bound} \begin{aligned} R_{13} &\ll_\eta k^{-\frac{1}{3}} \left(\frac{\sqrt{\abs{T}}}{N\sqrt{k}}\right)^{1+2\eta} \\
&\ll_\eta N^{-1} k^{-\frac{5}{6}} \abs{T}^{\frac{1}{2} + \eta}. \end{aligned}\end{equation} 

\noindent Combining (\ref{R11-final-bound}), (\ref{R12-final-bound}), and (\ref{R13-final-bound}) in (\ref{R1-split}) we have
\[\begin{aligned} R_1 &\ll_\eta \left(\frac{\abs{T}}{\abs{Q}}\right)^{\frac{k}{2}-\frac{3}{4}} \left(N^{-1} k^{-E} \abs{T}^{\frac{1}{2} + \eta} + N^{-1} k^{-E} \abs{T}^{\frac{1}{2}+\eta} + N^{-1} k^{-\frac{5}{6}} \abs{T}^{\frac{1}{2}+\eta}\right) \\
&\ll_\eta N^{-1} k^{-\frac{5}{6}} \abs{T}^{\frac{k}{2}-\frac{1}{4} + \eta}. \end{aligned}\] 
Taking $\eta = \epsilon$ this is precisely the statement of the lemma.  \end{proof}

\noindent Before proceeding let us remark that it is in the proof of this lemma that we obtain the improvement on \cite{ChidaKatsuradaMatsumoto2011}.  The relevant quantities to compare are our estimates of $R_{11}$, $R_{12}$, $R_{13}$ and \cite{ChidaKatsuradaMatsumoto2011} Lemma 4.4 (which is the result used for estimates which are $\ll \abs{T}^{k/2-1/4}$).  The bottleneck in their estimate is \cite{ChidaKatsuradaMatsumoto2011}(4.9), corresponding to our ($R_{12}$ and) $R_{13}$.  For small $x$ (\cite{ChidaKatsuradaMatsumoto2011}(4.10)) they use the same estimate for the Bessel function as we do and one can check that their exponent on $N$ can be made to improve by assuming larger $k$ as we have.  However, they estimate $J_k(x)$ for large $x$ (\cite{ChidaKatsuradaMatsumoto2011}(4.9)) by $x^{-1/2}$ which ultimately introduces a factor of $N^{1/2}$; we estimate $J_k(x)$ for large $x$ by $k^{-1/3}$ which avoids this, as well as giving the saving we require with respect to $k$.  \\

\noindent Finally we estimate the remaining term $R_2$:
 
\begin{lemma}\label{asymptotic-orthogonality-lemma3}  Let $\epsilon > 0$.  In the notation of (\ref{poincare-fc-summands}),
\[\abs{R_2} \ll_\epsilon N^{-2} k^{-\frac{2}{3}} \abs{T}^{\frac{k}{2}-\frac{1}{4}+\epsilon}.\] \end{lemma}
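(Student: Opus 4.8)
The plan is to follow the proof of Lemma~\ref{asymptotic-orthogonality-lemma2}, replacing its rank-one input (Kitaoka's formula for the rank-one part of the Fourier coefficient) by the full-rank analogue. First I would choose a convenient complete set of representatives for $\mathfrak{h}_N^{(2)}$. An element $M = \left(\begin{smallmatrix} A & B \\ C & D \end{smallmatrix}\right)$ of this set has $C$ invertible over $\mathbb{Q}$ with $N \mid C$ entrywise; writing its Smith normal form as $C = U\,\diag(c_1, c_2)\,V$ with $U, V \in \GL_2(\mathbb{Z})$, $c_1 \mid c_2$, the congruence $N \mid C$ forces $N \mid c_1$ and hence $N^2 \mid c_1 c_2 = \abs{\det C}$ --- this is the origin of the extra power of $N$ relative to $R_1$. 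The blocks $A$ and $D$ then range over residues determined by $C$ modulo the symmetric lattices appearing in the action of $\Delta$, subject to the symplectic relations, exactly as in Kitaoka's choice of representatives for $N = 1$; that this remains a complete set for $N > 1$ is checked as in Lemma~\ref{asymptotic-orthogonality-lemma2}. With these representatives, \cite{Kitaoka1984}~\S3 provides, for a fixed $M$, a closed expression for $h(M, T)$ of the shape
\[ h(M, T) = (\text{const}) \left(\frac{\abs{T}}{\abs{Q}}\right)^{\frac{k}{2} - \frac{3}{4}} w(c_1, c_2)\, K_{U,V}(T, Q)\, J_{k - \frac{3}{2}}\!\big(\kappa(T, Q; c_1, c_2)\big), \]
where $w$ is an explicit product of powers of $c_1, c_2$, $K_{U,V}$ is a generalized Gauss--Kloosterman sum over the residue classes of $A$ and $D$, and $\kappa$ is a Bessel argument proportional to a fixed root of $\abs{T}\abs{Q}$ divided by a fixed power of $c_1 c_2$.

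Next I would perform the summation over $(c_1, c_2, U, V, A, D)$. I would bound $K_{U,V}$ by the standard Weil-type estimates for such matrix exponential sums (or, where a trivial bound suffices, elementarily), and then, as in Lemma~\ref{asymptotic-orthogonality-lemma2}, reduce the sum over $U$ and $V$ to a sum weighted by the number of proper representations of integers by the fixed forms $Q$ and $T$, which is $\ll_\eta (\cdot)^{\eta}$. What remains is a bound of the form
\[ \abs{R_2} \ll_\eta \left(\frac{\abs{T}}{\abs{Q}}\right)^{\frac{k}{2} - \frac{3}{4}} \sum_{\substack{c_1 \mid c_2 \\ N \mid c_1}} c_1^{\alpha} c_2^{\beta}\, \Big| J_{k - \frac{3}{2}}\!\big(\kappa(T, Q; c_1, c_2)\big) \Big| \]
for fixed effective exponents $\alpha, \beta$ (depending on $w$ and the exponent in the bound for $K_{U,V}$, and understood up to $\epsilon$). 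As in the treatment of $R_1$, I would split this sum according to whether $\kappa \leq 1$, lies in $[1, \sqrt{k}\,]$, or exceeds $\sqrt{k}$. In the first two ranges the super-exponential growth of $\Gamma(k - \tfrac{3}{2})$ --- via the small-argument estimate (\ref{bessel-estimate-small-x}) and Stirling, as for $R_{11}$ and $R_{12}$ --- dominates any fixed negative power of $k$. In the third range one applies the uniform bound $J_{k - 3/2}(x) \ll k^{-1/3}$ from (\ref{bessel-estimate-large-x}); combining it with the length of the constrained sum over $c_1 \mid c_2$ subject to $N^2 \mid c_1 c_2$ then yields the remaining $k$-saving together with the factor $N^{-2}$, producing $\abs{R_2} \ll_\epsilon N^{-2} k^{-2/3}\abs{T}^{k/2 - 1/4 + \epsilon}$, as in the $N = 1$ case of \cite{KowalskiSahaTsimerman2012}.

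I expect the principal difficulty to be the bookkeeping rather than any new idea: one must extract from Kitaoka's full-rank formula the precise weights $w(c_1, c_2)$ and the precise shape of $\kappa$, bound the two-dimensional Gauss--Kloosterman sums efficiently, and then verify that the three-way split collapses to exactly $N^{-2} k^{-2/3}\abs{T}^{k/2 - 1/4 + \epsilon}$ --- in particular that in the large-argument range the constrained summation over $c_1, c_2$ contributes only $k^{-1/3}$ beyond the Bessel bound (so that the total saving in $k$ is $k^{-2/3}$, in contrast to the $k^{-5/6}$ of the rank-one case), while both factors of $N$ dividing $\abs{\det C}$ genuinely persist through the reductions. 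The analytic inputs --- the two Bessel estimates (\ref{bessel-estimate-small-x}) and (\ref{bessel-estimate-large-x}), the divisor and representation-number bounds, and the elementary estimates for the exponential sums --- are exactly those already used in Lemma~\ref{asymptotic-orthogonality-lemma2}.
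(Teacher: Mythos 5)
Your general plan — choose representatives by Smith normal form, apply Kitaoka, sum over the parameters, and use the two Bessel estimates with a three-way cutoff — is directionally right, but the proposal rests on a misidentification of the rank-two Kitaoka formula that would cause the argument to break.

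In rank two, the analogue of (\ref{kitaoka-R1-lemma}) is \emph{not} a single Bessel function with an explicit argument depending only on $(c_1, c_2)$. What Kitaoka \S4 gives, and what the paper uses, is
\[
\sum_{D \bmod NC} h(M, T) \; \asymp \; \Bigl(\tfrac{\abs{T}}{\abs{Q}}\Bigr)^{\frac{k}{2}-\frac{3}{4}} \norm{NC}^{-\frac{3}{2}} K(Q, T; NC)\, \mathcal{J}_k(P(NC)),
\]
where $\mathcal{J}_k(P) = \int_0^{\pi/2} J_{k-3/2}(4\pi s_1 \sin\theta)\, J_{k-3/2}(4\pi s_2 \sin\theta)\, \sin\theta \, d\theta$ and $s_1^2, s_2^2$ are the eigenvalues of $P(NC) = T\,{}^t(NC)^{-1} Q\, (NC)^{-1}$. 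This has two consequences your write-up does not anticipate. First, the Bessel argument is a \emph{pair} $(s_1, s_2)$ depending on the full matrix $NC$ (in particular on the left $\GL_2(\mathbb{Z})$ parameter $U$), not a scalar $\kappa(T,Q;c_1,c_2)$; you therefore cannot collapse the sum to one over $(c_1, c_2)$ alone and then split by the size of $\kappa$. Instead the paper partitions the $NC$ into three classes $\mathcal{C}_1, \mathcal{C}_2, \mathcal{C}_3$ according to whether both $s_i \leq 1$, one is $\leq 1$ and the other $\gg 1$, or both are $\gg 1$, and needs counting lemmas for the (weighted) number of $NC$ with given Smith/$V$ data lying in each class — these are Kitaoka's \S4 Lemma 2 (quoted as (\ref{size-of-c1})--(\ref{size-of-c3}) and, in the $\mathcal{C}_2$ case, a somewhat unusual \emph{weighted} count by $\abs{A}^{1+\delta}\tr({}^tUAU)^{-5/4-\delta}$). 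Your proposal omits this counting step entirely. Second, the decisive $k^{-2/3}$ in the main case ($\mathcal{C}_3$, both $s_i \gg 1$) comes from applying (\ref{bessel-estimate-large-x}) to \emph{each} of the two Bessel factors in the integrand of $\mathcal{J}_k$, giving $k^{-1/3}\cdot k^{-1/3}$; it does not come from $k^{-1/3}$ times "a $k^{-1/3}$ from the length of the constrained $c_1, c_2$-sum" as you suggest — that mechanism would not produce a $k$-power at all, since the length of that sum depends on $\abs{T}$ and $N$, not on $k$. Your $N^{-2}$ heuristic (both factors of $N$ dividing $\det C$ survive) is the right intuition, but in the proof it actually materializes through the combination of $\norm{NC}^{-3/2} = (N^2 c_1 c_2)^{-3/2}$ with Kitaoka's Kloosterman bound $K(Q,T;NC) \ll_\epsilon N^{5/2+\epsilon} c_1^2 c_2^{1/2+\epsilon} (Nc_2, {}^tvTv)^{1/2}$, so the bookkeeping is not quite as you sketch either. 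In short, the idea "adapt Kitaoka, track $N$, split three ways" is correct, but the analytic object you would be splitting is wrong, and the missing ingredients (the Bessel \emph{integral} $\mathcal{J}_k$, the partition by eigenvalue size of $P(NC)$, and the associated counting lemmas) are precisely where the real work of the rank-two case lies.
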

\begin{proof}  We choose
\[\mathfrak{h}_N^{(2)} = \left\{M = \left(\begin{matrix} * & * \\ NC & D \end{matrix}\right) \in \Sp_4(\mathbb{Z});\: \abs{C} \neq 0;\: D \bmod NC\right\},\]
for such $M$ we have $\theta(M) = 0$.  When $N=1$ our $\mathfrak{h}_1^{(2)}$ is the set of representative of \cite{Kitaoka1984} \S2 Lemma 5.  Again it easily follows that we have a complete set of representatives when $N>1$ as well, and also that Kitaoka's computations are applicable.  Note that these are once again the same as the representatives used in \cite{ChidaKatsuradaMatsumoto2011}.  We can then write
\begin{equation}\label{rank2-summand-basic-bound} R_2 = \sum_{\substack{C \in \mathbb{Z}^{2 \times 2} \\ \abs{C} \neq 0}} \sum_{D \bmod NC} h(M, T)\end{equation}
with $M = \left(\begin{smallmatrix} * & * \\ NC & D \end{smallmatrix}\right)$ as above.  Fix a matrix $C$ and consider the sum over all $M = \left(\begin{smallmatrix} * & * \\ NC & D \end{smallmatrix}\right) \in \mathfrak{h}_N^{(2)}$.  Using the arguments of \cite{Kitaoka1984} \S4 following Lemma 1 up to the second equation on p166, we obtain the following: let  \begin{itemize}
\item  $P(NC) := T{}^t(NC)^{-1}Q(NC)^{-1}$, 
\item  $\norm{NC}$ be the absolute value of $\abs{NC} = N^2\det{(C)}$,
\item  $K(Q, T; NC)$ be the matrix Kloosterman sum defined (and bounded) by Kitaoka (\cite{Kitaoka1984}, \S1)
\item  $0 < s_1 \leq s_2$ be such that $s_1^2, s_2^2$ are the eigenvalues of the positive definite matrix $P(NC)$, and write
\[\mathcal{J}_k(P(NC)) = \int_0^{\pi/2} J_{k-\frac{3}{2}}(4 \pi s_1 \sin (\theta)) J_{k-\frac{3}{2}}(4 \pi s_2 \sin(\theta)) \sin(\theta) d\theta.\]\end{itemize}
Then
\[\begin{aligned} \sum_{D \bmod NC} h(M, T) &= \frac{1}{2\pi^4} \left(\frac{\abs{T}}{\abs{Q}}\right)^{\frac{k}{2}-\frac{3}{4}} \norm{NC}^{-\frac{3}{2}} K(Q, T; NC) \mathcal{J}_k(P(NC)).\end{aligned}\]

\noindent Using principal divisors we can write $NC \in \mathbb{Z}^{2 \times 2}$ with $\abs{C} \neq 0$ uniquely (see \cite{Kitaoka1984} \S4 Lemma 1) as
\[NC = U^{-1} \left(\begin{matrix} Nc_1 & 0 \\ 0 & Nc_2 \end{matrix}\right)V^{-1}\]
where $1 \leq c_1$, $c_1 \mid c_2$, $U \in \GL_2(\mathbb{Z})$ and $V \in \SL_2(\mathbb{Z})/\Gamma^0(c_2/c_1)$.  Here 
\[\Gamma^0(m) = \left\{\left(\begin{smallmatrix} a & b \\ c & d \end{smallmatrix}\right) \in \SL_2(\mathbb{Z});\:b \equiv 0 \bmod m\right\}.\]  
We will thus consider our matrix $NC$ to be parameterised by $(U, Nc_1, Nc_2, V)$.  To handle the sum over the $NC$, first suppose that $(Nc_1, Nc_2, V)$ is fixed.  Pick $U_1 \in \GL_2(\mathbb{Z})$ such that
\begin{equation} \label{A-def} A = A(Nc_1, Nc_2, V) := {}^t\left(V \left(\begin{smallmatrix} Nc_1 & 0 \\ 0 & Nc_2 \end{smallmatrix}\right)^{-1} U_1\right)T\left(V\left(\begin{smallmatrix} Nc_1 & 0 \\ 0 & Nc_2\end{smallmatrix}\right)^{-1}U_1\right)\end{equation}
is Minkowski-reduced.  Clearly we have that the matrices $NC$ with parameters $(Nc_1, Nc_2, V)$ are precisely the matrices
\[NC = U^{-1}U_1^{-1} \left(\begin{matrix} Nc_1 & 0 \\ 0 & Nc_2 \end{matrix}\right) V^{-1}\]
as $U$ varies over $\GL_2(\mathbb{Z})$.  Hence we can write, for any $NC$ with parameters $(Nc_1, Nc_2, V)$,
\begin{equation} \begin{aligned} \label{P-in-U1-var}P(NC) &= T {}^t (NC)^{-1} Q (NC)^{-1} \\
&= T {}^t \left(V \left(\begin{smallmatrix} Nc_1 & 0 \\ 0 & Nc_2\end{smallmatrix}\right)^{-1}U_1 U\right) Q \left(V \left(\begin{smallmatrix} Nc_1 & 0 \\ 0 & Nc_2 \end{smallmatrix}\right)^{-1}U_1 U\right)\end{aligned}\end{equation}
From (\ref{A-def}) and (\ref{P-in-U1-var}) we immediately see $\abs{P(NC)} = \abs{Q}\abs{A}$.  On the other hand, $\abs{P(NC)} = s_1^2s_2^2$, by definition of $s_1, s_2$.  Now $A$, being positive definite symmetric, is diagonizable, say to
\[H = H(Nc_1, Nc_2, V) := \left(\begin{matrix} a & 0 \\ 0 & c \end{matrix}\right),\]
where $0 < a \leq c$.  Hence we have, recalling that $\abs{Q}$ is treated constant,
\begin{equation}\label{det-like-H} \abs{H} = ac \asymp s_1^2 s_2^2 = \abs{P(NC)}.\end{equation}
By computing the determinant in (\ref{A-def}) we have
\begin{equation} s_1^2 s_2^2 \asymp \frac{\abs{T}}{N^4c_1^2c_2^2}.\label{si-in-known-parameters}\end{equation}  Since $A$ is Minkowski-reduced, we also have
\begin{equation}\label{trace-like-H}\tr(P(NC)) \asymp \tr(A[U]) = \tr(H[U]).\end{equation}

\noindent Continuing to work with any $NC$ having parameters $(Nc_1, Nc_2, V)$, (\cite{Kitaoka1984} \S 1 Prop. 1) gives us
\[K(Q, T; NC) \ll_\epsilon N^{\frac{5}{2}+\epsilon}c_1^2c_2^{\frac{1}{2}+\epsilon} (Nc_2, {}^tvTv)^{\frac{1}{2}}\]
for any $\epsilon>0$, where $v$ is the second column of $V$.  Thus
\begin{equation}\label{kitaoka-h-bound} \begin{aligned} \sum_{D \bmod NC} h(M, T) &\ll_\epsilon \left(\frac{\abs{T}}{\abs{Q}}\right)^{\frac{k}{2}-\frac{3}{4}} N^{-\frac{1}{2}+\epsilon} c_1^{\frac{1}{2}} c_2^{-1+\epsilon} (Nc_2, {}^tvTv)^{\frac{1}{2}} \abs{\mathcal{J}_k(P(NC))}.\end{aligned}\end{equation}

\noindent We handle the different $NC$ according to the properties of $P(NC)$ by partitioning in to the following sets:
\[\begin{aligned} \mathcal{C}_1 &= \left\{NC \in \mathbb{Z}^{2 \times 2};\: \abs{C} \neq 0;\:\tr(P(NC)) < 1\right\}, \\
\mathcal{C}_2 &= \left\{NC \in \mathbb{Z}^{2 \times 2};\: \abs{C} \neq 0;\: \tr(P(NC)) \geq \max(2\abs{P(NC)}, 1)\right\}, \\
\mathcal{C}_3 &= \left\{NC \in \mathbb{Z}^{2 \times 2};\: \abs{C} \neq 0;\: 1 \leq \tr(P(NC)) < 2\abs{P(NC)}\right\}. \end{aligned}\]
Recall we had $0 < s_1 \leq s_2$ such that $s_1^2, s_2^2$ were the eigenvalues of $P(NC)$.  So $\tr(P(NC)) = s_1^2 + s_2^2$ and $\abs{P(NC))} = s_1^2s_2^2$.  For $\mathcal{C}_1$, $\tr(P(NC)) < 1$ implies $s_1^2, s_2^2 \leq 1$.  For $\mathcal{C}_2$, $s_1^2 + s_2^2 \geq 2 s_1^2 s_2^2$ and $s_2^2 \geq s_1^2$ imply $s_1^2 \leq 1$; in addition $s_1^2 + s_2^2 \geq 1$ then gives $s_2^2 \geq \max(1-s_1^2, s_1^2) \geq 1/2$.  For $\mathcal{C}_3$, we have $2s_1^2s_2^2 \geq s_1^2 +s_2^2$ which, together with the AM-GM inequality $s_1^2 + s_2^2 \geq 2 s_1s_2$ gives $s_1s_2 \geq 1$, so $s_2 \geq 1$; and $2s_1^2s_2^2 \geq s_1^2 + s_2^2$ also gives $s_1^2 \geq s_2^2/(2s_2^2 - 1)$, hence $s_1^2 \geq 1/2$.  It then follows that
\[\begin{aligned} \mathcal{C}_1 &\subset \left\{NC \in \mathbb{Z}^{2 \times 2};\: 0 < s_1 \leq s_2 \leq 1\right\}, \\
\mathcal{C}_2 &\subset \left\{NC \in \mathbb{Z}^{2 \times 2};\: 0 < s_1 \leq 1;\: s_2 \geq 1/\sqrt{2}\right\}, \\
\mathcal{C}_3 &\subset \left\{NC \in \mathbb{Z}^{2 \times 2};\: s_1 \geq 1/\sqrt{2};\: s_2 \geq 1\right\}. \end{aligned}\] 
This characterization of the $\mathcal{C}_i$ based on the values of the $s_i$ will be important in the following case analysis.  For now we also define
\[\mathcal{C}_i(Nc_1, Nc_2, V) = \{NC \in \mathcal{C}_i;\: NC\text{ has final three parameters }(Nc_1, Nc_2, V)\},\]
so $\bigcup_{(Nc_1, Nc_2, V)} \mathcal{C}_i(Nc_1, Nc_2, V) = \mathcal{C}_i$.  We recall the (weighted) sizes of these sets as proved in \cite{Kitaoka1984} \S4 Lemma 2 and stated in Lemma 3.4 of \cite{KowalskiSahaTsimerman2012}:  for any $\epsilon, \delta>0$,
\begin{align}
  \abs{\mathcal{C}_1(Nc_1, Nc_2, V)} &\ll_\epsilon (ac)^{-\frac{1}{2}-\epsilon} \label{size-of-c1}\\
  \sum_{NC \in \mathcal{C}_2(Nc_1, Nc_2, V)} \abs{A}^{1+\delta} \tr({}^tU A U)^{-\frac{5}{4} - \delta} &\ll_{\delta, \epsilon} \begin{cases} (ac)^{\frac{1}{2} + \delta  - \epsilon} & \text{if } ac<1\\ (ac)^{\frac{1}{4} + \epsilon} & \text{if }ac\geq 1\end{cases} \label{size-of-c2}\\
  \abs{\mathcal{C}_3(Nc_1, Nc_2, V)} &\ll_\epsilon (ac)^{\frac{1}{2}+\epsilon} \label{size-of-c3}
\end{align}
Note that again our $(Nc_1, Nc_2, V)$ are simply a subset of the $(c_1, c_2, V)$ considered in \cite{KowalskiSahaTsimerman2012}.  Finally, write 
\[R_2 = R_{21} + R_{22} + R_{33},\] 
where
\[R_{2i} = \sum_{NC \in \mathcal{C}_i} \sum_{D \bmod NC} h(M, T).\]
Then by (\ref{kitaoka-h-bound}) we have
\begin{equation}\label{R2i-starting-point} R_{2i} \ll_\epsilon \left(\frac{\abs{T}}{\abs{Q}}\right)^{\frac{k}{2}-\frac{3}{4}} N^{-\frac{1}{2} + \epsilon} \sum_{(Nc_1, Nc_2, V)} c_1^{\frac{1}{2}} c_2^{-1+\epsilon} (Nc_2, {}^tvTv)^{\frac{1}{2}} \mathcal{R}_{2i}(Nc_1, Nc_2, V)\end{equation}
where
\[\mathcal{R}_{2i}(Nc_1, Nc_2, V) = \sum_{NC \in \mathcal{C}_i(Nc_1, Nc_2, V)} \abs{\mathcal{J}_k(P(NC))}.\]
We will again bound each of these terms individually.  \\

\noindent \textbf{Case $R_{21}$:}  Here we have $s_1, s_2 \leq 1$.  Using the esimate (\ref{bessel-estimate-small-x}) we have
\[\begin{aligned} \abs{\mathcal{J}_k(P(NC))} &\ll \frac{1}{\Gamma\left(k-\frac{1}{2}\right)^2} (4 \pi s_1)^{k-\frac{3}{2}} (4 \pi s_2)^{k-\frac{3}{2}} \\
&\ll \frac{(s_1s_2)^{2+2\delta}}{k^E},\end{aligned}\]
where the final line holds for any reasonably small $\delta > 0$, $E \geq 1$, by using the fact that $k \geq 6$ and the superexponential growth of the gamma function.  Also, by (\ref{size-of-c1}),
\[\abs{\mathcal{C}_1(Nc_1, Nc_2, V)} \ll_\delta (ac)^{-\frac{1}{2}-\frac{\delta}{2}} \ll (s_1s_2)^{-1-\delta}.\]
So
\[\mathcal{R}_{21}(Nc_1, Nc_2, V) \ll_\delta \frac{(s_1s_2)^{1+\delta}}{k^E} \ll \frac{\abs{T}^{\frac{1}{2}+\frac{\delta}{2}}}{k^E(N^2c_1c_2)^{1+\delta}},\] 
using (\ref{si-in-known-parameters}).  Thus, with $c_1, c_2$ fixed,
\begin{equation}\label{V-sum} \sum_V c_1^{\frac{1}{2}} c_2^{-1+\epsilon} (Nc_2, {}^tvTv)^{\frac{1}{2}} \mathcal{R}_{21}(Nc_1, Nc_2, V) \ll_\delta \frac{\abs{T}^{\frac{1}{2} + \frac{\delta}{2}}}{k^E N^{2+2\delta}} \sum_V c_1^{-\frac{1}{2}-\delta} c_2^{-2-\delta+\epsilon} (Nc_2, {}^tvTv)^{\frac{1}{2}} .\end{equation}
By \cite{Kitaoka1984} \S1 Proposition 2 with $n=c_2/c_1$ we have, for any $\eta > 0$,
\[\sum_V \left(\frac{c_2}{c_1}, {}^tvTv\right)^{\frac{1}{2}} \ll_\eta \left(\frac{c_2}{c_1}\right)^{1+\eta} \left(\cont(T), \frac{c_2}{c_1}\right)^{\frac{1}{2}} \]
where, writing $T = \left(\begin{smallmatrix} t_1 & t_2/2 \\ t_2/2 & t_3 \end{smallmatrix}\right)$, $\cont(T) = \gcd(t_1, t_2, t_3)$.  Using $(Nc_2, {}^tvTv)^{\frac{1}{2}} \leq N^{\frac{1}{2}} c_1^{\frac{1}{2}} \left(\frac{c_2}{c_1}, {}^tvTv\right)^{\frac{1}{2}}$ in (\ref{V-sum}) then gives
\[\sum_V c_1^{\frac{1}{2}} c_2^{-1+\epsilon} (Nc_2, {}^tvTv)^{\frac{1}{2}} \mathcal{R}_{21}(Nc_1, Nc_2, V) \ll_{\delta, \eta} \frac{\abs{T}^{\frac{1}{2}+\frac{\delta}{2}}}{k^E}N^{-\frac{3}{2}-2\delta} c_1^{-1-\delta-\eta} c_2^{-1-\delta+ \epsilon+\eta} \left(\frac{c_2}{c_1}, \cont(T)\right)^{\frac{1}{2}}.\]
Substituting this in to (\ref{R2i-starting-point}), and writing $c_2 = nc_1$,
\[\begin{aligned} R_{21} &\ll_{\delta, \eta, \epsilon} \left(\frac{\abs{T}}{\abs{Q}}\right)^{\frac{k}{2}-\frac{3}{4}} \abs{T}^{\frac{1}{2}+\frac{\delta}{2}} N^{-2 - 2\delta} k^{-E} \sum_{c_1,n \geq 1}  c_1^{-2-2\delta+\epsilon} n^{-1-\delta+ \epsilon+\eta} \left(n, \cont(T) \right)^{\frac{1}{2}}. \end{aligned} \]
Take $\eta = \epsilon$, $\delta = 3\epsilon$.  Clearly the sum over $c_1$ is convergent.  For the sum over $n$ we note that $\sum_{n \geq 1} n^{-1} (n, \cont(T))^{\frac{1}{2}}$ may be written as $\sum_{e \mid \cont(T)} e^{-\frac{1}{2}}\sum_{m \geq 1} m^{-1} \ll \sum_{e \mid \cont(T)} e^{-\frac{1}{2}} \ll_\epsilon \cont(T)^\epsilon$.  Then using the inequality $\cont(T)^2 \leq 4\det(T)$ we see that the sum over $n$ is thus $\ll_\epsilon \abs{T}^\epsilon$.  Thus, after redefining $\epsilon$, we have for any $E \geq 1$
\[R_{21} \ll_\epsilon N^{-2} k^{-E} \abs{T}^{\frac{k}{2}-\frac{1}{4} + \epsilon}.\]

\noindent \textbf{Case $R_{22}$:}  This is the case $s_1 \leq 1$, $s_2 \gg 1$.  Now we have
\[\abs{\mathcal{J}_k(P(NC))} \ll \frac{s_1^{k-\frac{3}{2}}}{\Gamma(k-\frac{3}{2})} \frac{2^k}{s_2^{\frac{1}{2}}},\]
where we have used (\ref{bessel-estimate-small-x}) to bound the Bessel function involving $s_1$, and the estimate $J_{k}(x) \ll 2^kx^{-1/2}$ in the range $k \geq 1$, $x > 0$ (i.e. \cite{KowalskiSahaTsimerman2012} (3.1.5))  for the one involving $s_2$.  Let $NC$ have parameters $(U, Nc_1, Nc_2, V)$, and recall $A = A(Nc_1, Nc_2, V)$ defined by (\ref{A-def}).  We have $\abs{A} \asymp \abs{P(NC)} = s_1^2 s_2^2$, so 
\[\abs{\mathcal{J}_k(P(NC))} \ll \frac{2^k}{\Gamma(k-\frac{3}{2})}\frac{\abs{A}^{\frac{k}{2}-\frac{3}{4}}}{s_2^{k-1}}.\]
Also, by (\ref{trace-like-H}), $\tr({}^t U A U) \asymp \tr(P(NC)) = s_1^2 + s_2^2 \asymp s_2^2$, since $s_1 \leq 1$.  So
\[\abs{\mathcal{J}_k(P(NC))} \ll \frac{2^k}{\Gamma(k-\frac{3}{2})} \frac{\abs{A}^{\frac{k}{2}-\frac{3}{4}}}{\tr({}^tUAU)^{\frac{k-1}{2}}}.\]
For any $\delta > 0$ we may write $\abs{A}^{\frac{k}{2}-\frac{3}{4}} \tr({}^tUAU)^{\frac{1-k}{2}} = \abs{A}^{1+\delta} \tr({}^tU A U)^{\frac{5}{4}-\delta} \left(\frac{\abs{A}}{\tr({}^tUAU)}\right)^{\frac{k}{2}-\frac{7}{4} - \delta}$.  But $\frac{\abs{A}}{\tr({}^tUAU)} \asymp \frac{s_1^2s_2^2}{s_2^2} = s_1^2 \leq 1$.  Now $k \geq 6$ and we can assume $\delta$ is small, so
\[\abs{\mathcal{J}_k(P(NC))} \ll \frac{2^k}{\Gamma(k-\frac{3}{2})} \abs{A}^{1+\delta} \tr({}^tUAU)^{\frac{k}{2}-\frac{5}{4}-\delta}.\] 
Using (\ref{size-of-c2}) (with $\epsilon = \delta/2$) and the superexponential growth of the gamma function gives
\[\mathcal{R}_{22}(Nc_1, Nc_2, V) \ll k^{-E} \times \begin{cases} (ac)^{\frac{1}{2} + \frac{\delta}{2}} & \text{if }ac < 1, \\ (ac)^{\frac{1}{4}+\frac{\delta}{2}} & \text{if }ac \geq 1.\end{cases}\]
for any $E \geq 1$. Recalling from (\ref{det-like-H}) that $ac \asymp \abs{T}/(N^4c_1^2c_2^2)$ we can now write bound the sum for $R_{22}$ by
\[\begin{aligned} R_{22} \ll_\epsilon \left(\frac{\abs{T}}{\abs{Q}}\right)^{\frac{k}{2}-\frac{3}{4}} N^{-\frac{1}{2}+\epsilon}k^{-E} &\left(\sum_{c_1c_2 > \frac{\sqrt{\abs{T}}}{N^2}} \left(\frac{\sqrt{\abs{T}}}{N^2c_1c_2}\right)^{1+\delta} \sum_V c_1^{\frac{1}{2}} c_2^{-1+\epsilon}(Nc_2, {}^tvTv)^{\frac{1}{2}} \right.\\
&\left.\qquad+ \sum_{c_1c_2 \leq \frac{\sqrt{\abs{T}}}{N^2}} \left(\frac{\sqrt{\abs{T}}}{N^2c_1c_2}\right)^{\frac{1}{2}+\delta} \sum_V c_1^{\frac{1}{2}} c_2^{-1+\epsilon}(Nc_2, {}^tvTv)^{\frac{1}{2}} \right).\end{aligned}\]
Now in the second sum the base with exponent $\frac{1}{2}+\delta$ is larger than $1$, so we can certainly increase the exponent to $1+\delta$.  This then reduces to
\[\begin{aligned} R_{22} &\ll_\epsilon \left(\frac{\abs{T}}{\abs{Q}}\right)^{\frac{k}{2}-\frac{3}{4}}N^{-\frac{5}{2}-2\delta+\epsilon}k^{-E} \sum_{\substack{c_1, c_2 \geq 1 \\ c_1 \mid c_2}} \left(\frac{\sqrt{\abs{T}}}{c_1c_2}\right)^{1+\delta} \sum_V c_1^{\frac{1}{2}} c_2^{-1+\epsilon}(Nc_2, {}^tvTv)^{\frac{1}{2}} \\
&\ll_\epsilon \left(\frac{\abs{T}}{\abs{Q}}\right)^{\frac{k}{2}-\frac{3}{4}}N^{-\frac{5}{2}-2\delta+\epsilon}k^{-E} \abs{T}^{\frac{1}{2} + \frac{\delta}{2}} \sum_{c_1, c_2, V} c_1^{-\frac{1}{2}-\delta}c_2^{-2-\delta+\epsilon} (Nc_2, {}^tvTv)^{\frac{1}{2}}\end{aligned}\]
But the sum over $c_1, c_2, V$ is now exactly the same as the sum appearing in (\ref{V-sum}) (more precisely summed over $c_1, c_2$, as we proceeded to do there).  Thence we conclude that this sum over $c_1, c_2, V$ is $\ll_\delta N^{\frac{1}{2}} \abs{T}^\frac{\delta}{2}$, so taking $\delta = \epsilon$ we obtain
\[R_{22} \ll_\epsilon \abs{T}^{\frac{k}{2}-\frac{1}{4} + \epsilon} N^{-2} k^{-E}\]
for any $E \geq 1$ as before. \\

\noindent \textbf{Case $R_{23}$:}  In this case $1 \ll s_1 \leq s_2$.  Let $M_1 = \{\theta \in [0, 2\pi);\: 4\pi s_2 \sin\theta \leq 1\}$ (note that if $\theta \in M_1$ then $4 \pi s_1 \sin\theta \leq 1$ as well), and let $M_2 = \{\theta \in [0, 2\pi);\: 4 \pi s_1 \sin\theta \geq 1\}$ (and note that if $\theta \in M_2$ then $4 \pi s_2 \sin\theta \geq 1$ as well).  Then
\[\abs{\mathcal{J}_k(P(NC))} \ll \left(\int_{M_1} + \int_{M_2}\right) \abs{J_{k-\frac{3}{2}}(4 \pi s_1 \sin\theta) J_{k-\frac{3}{2}}(4 \pi s_2 \sin\theta)\sin\theta}d\theta.\] 
We estimate using (\ref{bessel-estimate-small-x}) and (\ref{bessel-estimate-large-x}) on $M_1$ and $M_2$ respectively.  Since the argument of the Bessel functions is $\leq 1$ on $M_1$, we may replace the exponent $k-\frac{3}{2}$ by $\delta$ for any $\delta > 0$.  Since the gamma functions grow superexponentially we may replace these by $2^{-k}$, giving
\[\abs{\mathcal{J}_k(P(NC))} \ll_\delta \frac{(s_1s_2)^{\delta}}{2^k} + k^{-\frac{2}{3}},\]
hence
\[\abs{\mathcal{J}_k(P(NC))} \ll_\delta k^{-\frac{2}{3}}(s_1s_2)^{\delta}.\]
Also, from (\ref{size-of-c3}) and (\ref{det-like-H}), $\abs{\mathcal{C}_3(Nc_1, Nc_2, V)} \ll_\epsilon (ac)^{\frac{1}{2}+\epsilon} \ll (s_1s_2)^{1+2\epsilon}$, so taking $\epsilon = \delta$ we have
\[\mathcal{R}_{23}(Nc_1, Nc_2, V) \ll_\delta k^{-\frac{2}{3}} (s_1s_2)^{1+3\delta}.\]
Replacing $\delta$ by $\delta/3$ and recalling (\ref{si-in-known-parameters}) gives
\[\mathcal{R}_{23}(Nc_1, Nc_2, V) \ll_\delta k^{-\frac{2}{3}} N^{-2-2\delta} {\abs{T}}^{\frac{1}{2}+\frac{\delta}{2}} (c_1c_2)^{-1-\delta},\]
hence
\[R_{23} \ll_\delta \left(\frac{\abs{T}}{\abs{Q}}\right)^{\frac{k}{2}-\frac{3}{4}} {\abs{T}}^{\frac{1}{2}+\frac{\delta}{2}} k^{-\frac{2}{3}} N^{-\frac{5}{2}-2\delta+\epsilon} \sum_{c_1, c_2, V} c_1^{-\frac{1}{2}-\delta} c_2^{-2-\delta+\epsilon} (Nc_2, {}^tvTv)^{\frac{1}{2}}.\]
The sum over $c_1, c_2, V$ is once again the sum we dealt with for $R_{21}$, so again taking $\delta = \epsilon$ we have
\[R_{23} \ll_\epsilon \abs{T}^{\frac{k}{2}-\frac{1}{4}+\epsilon} N^{-2} k^{-\frac{2}{3}}.\]

\noindent Putting these three cases in to (\ref{rank2-summand-basic-bound}) we obtain the result.  \end{proof}

\section{The main theorem}\label{main-theorem}

Fix $d, \Lambda$ and a finite set of primes $S$.  Recall the definitions of the spaces $X_S$ and $Y_S$ from (\ref{XS-YS-definition}).  Recall also the measures $d\nu_{S, N, k}$ and $d\mu_{S}$ defined by (\ref{nu-def}) and (\ref{mu-def}) respectively.  Our aim was to prove Theorem \ref{local-equidistribution-theorem-qualitive}; that is, for any choice of $d$ and $\Lambda$, the measure $\nu_{S, N}$ converges weak-$*$ to the measure $\mu_S$ as $k$ and $N$ vary admissibly.  

\begin{proposition}\label{convergence-on-U_p}  Let $S$ be a finite set of primes, and let $l = (l_p)_{p \in S}$, $m = (m_p)_{p \in S}$ be tuples of non-negative integers.  Define $L = \prod_{p \in S} p^{l_p}$, $M = \prod_{p \in S} p^{m_p}$.  Let $\mathcal{S}_k(N)^*$ be an orthogonal basis of $\mathcal{S}_k(N)$ consisting of eigenforms for $\mathcal{H}_p$ when $p \in S$.  Then
\[\sum_{f \in \mathcal{S}_k(N)^*} \omega_{f, N, k} \prod_{p \in S} U_p^{l_p, m_p} (a_p(f), b_p(f)) = \delta(l,m) + O_{d, \epsilon}\left(N^{-1} k^{-\frac{2}{3}} L^{1+\epsilon}M^{\frac{3}{2}+\epsilon}\right),\] 
where
\[\delta(l, m) = \begin{cases} 1 & \text{if }l_p = m_p = 0\text{ for all }p \in S, \\ 0 & \text{otherwise,} \end{cases}\]
and the functions $U_p^{l_p, m_p} \in C(Y_S)$ are as in Theorem \ref{sugano-formula}.  \end{proposition}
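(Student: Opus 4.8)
The plan is to combine the local--global identity (\ref{local-bessels-in-terms-of-sugano}) with the second-moment estimate of Proposition \ref{crly:applicable-version}, after first inserting the arithmetic weight into the picture. First I would notice that the factor $\sum_{c\in\Cl_d}\overline{\Lambda(c)}a(\mathsf{S}_c;f)$ on the right of (\ref{local-bessels-in-terms-of-sugano}) is exactly $a(d,\Lambda;f)$, and multiply both sides of (\ref{local-bessels-in-terms-of-sugano}) (which applies to each $f\in\mathcal{S}_k(N)^*$ since these are eigenforms at all $p\in S$) by $\overline{a(d,\Lambda;f)}=\sum_{c'\in\Cl_d}\Lambda(c')\overline{a(\mathsf{S}_{c'};f)}$. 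Since $U_p^{0,0}=1$ and every prime factor of $L$ and of $M$ lies in $S$, the product $\prod_{p\mid LM}U_p^{l_p,m_p}$ equals $\prod_{p\in S}U_p^{l_p,m_p}$; so dividing by $\langle f,f\rangle$, summing over $f\in\mathcal{S}_k(N)^*$, and multiplying through by $c_k^{d,\Lambda}/\vol(\Gamma_0(N)\backslash\mathbb{H}_2)$ turns the left-hand side into precisely $\sum_{f}\omega_{f,N,k}\prod_{p\in S}U_p^{l_p,m_p}(a_p(f),b_p(f))$, while the right-hand side becomes
\[\frac{c_k^{d,\Lambda}\,|\Cl_d|}{\vol(\Gamma_0(N)\backslash\mathbb{H}_2)\,L^{k-3/2}M^{k-2}\,|\Cl_d(M)|}\sum_{c\in\Cl_d(M)}\sum_{c'\in\Cl_d}\overline{\Lambda(c)}\Lambda(c')\sum_{f\in\mathcal{S}_k(N)^*}\frac{\overline{a(\mathsf{S}_{c'};f)}\,a(\mathsf{S}_c^{L,M};f)}{\langle f,f\rangle}.\]
Now apply Proposition \ref{crly:applicable-version} to each inner sum over $f$. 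The bookkeeping is clean because the explicit archimedean factor $\sqrt{\pi}(4\pi)^{3-2k}\Gamma(k-3/2)\Gamma(k-2)(d/4)^{-k+3/2}$ occurring there equals $c_k^{d,\Lambda}\,w(-d)|\Cl_d|/d_\Lambda$ by the definition of $c_k^{d,\Lambda}$; substituting, all of $c_k^{d,\Lambda}$, $\vol(\Gamma_0(N)\backslash\mathbb{H}_2)$ and $|\Cl_d|$ cancel, leaving
\[\sum_{f}\omega_{f,N,k}\prod_{p\in S}U_p^{l_p,m_p}=\frac{d_\Lambda}{2w(-d)\,L^{k-3/2}M^{k-2}\,|\Cl_d(M)|}\sum_{c\in\Cl_d(M)}\sum_{c'\in\Cl_d}\overline{\Lambda(c)}\Lambda(c')\bigl(\delta(c,c',L,M)+E(N,k;c,c',L,M)\bigr).\]

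For the error term, $d$ is held fixed, so $d_\Lambda$, $w(-d)$ and $|\Cl_d|$ are $O_d(1)$, and the one point worth stressing is that the factor $1/|\Cl_d(M)|$ exactly compensates the number of summands in the $c$-sum. Hence the triangle inequality gives a total error $\ll_d (L^{k-3/2}M^{k-2})^{-1}\max_{c,c'}|E(N,k;c,c',L,M)|$, and inserting $E\ll_\epsilon N^{-1}k^{-2/3}(LM)^{k-1/2+\epsilon}$ and collecting exponents yields exactly the claimed $O_{d,\epsilon}(N^{-1}k^{-2/3}L^{1+\epsilon}M^{3/2+\epsilon})$.

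It remains to identify the main term with $\delta(l,m)$. Since $\delta(c,c',L,M)=\#\{U\in\GL_2(\mathbb{Z}):U\mathsf{S}_{c'}{}^tU=\mathsf{S}_c^{L,M}\}$ and $\GL_2(\mathbb{Z})$-equivalence preserves the determinant, while $\det\mathsf{S}_{c'}=d/4$ and $\det\mathsf{S}_c^{L,M}=L^2M^2\cdot d/4$ by (\ref{LM-mod-def}), all these counts vanish unless $L=M=1$; but $L=M=1$ is the same condition as $l_p=m_p=0$ for all $p\in S$, i.e.\ as $\delta(l,m)=1$, so when $(L,M)\neq(1,1)$ both sides are $0$. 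When $L=M=1$ we have $\Cl_d(1)=\Cl_d$ and $\mathsf{S}_c^{1,1}=\mathsf{S}_c$, so the main term reduces to $\frac{d_\Lambda}{2w(-d)|\Cl_d|}\sum_{c,c'\in\Cl_d}\overline{\Lambda(c)}\Lambda(c')\#\{U\in\GL_2(\mathbb{Z}):U\mathsf{S}_{c'}{}^tU=\mathsf{S}_c\}$. Splitting $\GL_2(\mathbb{Z})$ into $\SL_2(\mathbb{Z})$ and its nontrivial coset, and using the standard dictionary between $\Cl_d$ and $\SL_2(\mathbb{Z})$-classes of positive definite binary quadratic forms of discriminant $-d$ (under which the opposite form corresponds to the inverse class and the oriented automorphism group has order $w(-d)$), one gets $\#\{U\in\GL_2(\mathbb{Z}):U\mathsf{S}_{c'}{}^tU=\mathsf{S}_c\}=w(-d)(\mathbf{1}[c'=c]+\mathbf{1}[c'=c^{-1}])$. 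Hence the double sum equals $w(-d)\bigl(\sum_{c}|\Lambda(c)|^2+\sum_{c}\overline{\Lambda^2(c)}\bigr)=w(-d)|\Cl_d|(1+\mathbf{1}[\Lambda^2=\mathbf{1}])=2w(-d)|\Cl_d|/d_\Lambda$ by the definition of $d_\Lambda$, and multiplying by $d_\Lambda/(2w(-d)|\Cl_d|)$ gives $1=\delta(l,m)$, completing the proof.

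I expect essentially no analytic obstacle here, since the genuine input has been isolated in Proposition \ref{crly:applicable-version} (and through it in Theorem \ref{bound-on-poincare-fourier-coefficients}); the part requiring care is the last paragraph's class-group arithmetic — correctly handling the ambiguous (two-torsion) classes, the exact order $w(-d)$ of the automorphism groups (including the small discriminants $d=3,4$), and the tracking of $d_\Lambda$ so that the constants collapse to $1$.
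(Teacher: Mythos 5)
Your proposal is correct and follows essentially the same route as the paper: expand $|a(d,\Lambda;f)|^2$ using (\ref{fourier-coeff-class-gp-average}), insert (\ref{local-bessels-in-terms-of-sugano}), sum over the basis, apply Proposition \ref{crly:applicable-version}, then split into the cases $LM=1$ and $LM>1$ with the determinant argument handling the latter. The only cosmetic difference is that where the paper evaluates the $LM=1$ main term by citing \cite{KowalskiSahaTsimerman2012} Lemma 3.7, you carry out that class-group/automorphism count explicitly (and correctly, including the ambiguous classes and the role of $d_\Lambda$).
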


\begin{proof}  Recall the definition of $a(d, \Lambda; f)$ given by (\ref{fourier-coeff-class-gp-average}).  Computing, using this definition for the first and third line and the crucial formula (\ref{local-bessels-in-terms-of-sugano}) for the second, 
\[\begin{aligned} \frac{\abs{a(d, \Lambda; f)}^2}{\langle f, f \rangle} \prod_{p \in S} U_p^{l_p, m_p}(a_p(f), b_p(f)) &= \frac{\overline{a(d, \Lambda; f)}}{\langle f, f \rangle} \sum_{c \in \Cl_d} \overline{\Lambda(c)}a(\mathsf{S}_c; f) \prod_{p \in S} U_p^{l_p, m_p}(a_p(f), b_p(f)) \\
&= \frac{\overline{a(d, \Lambda; f)}}{\langle f, f \rangle}\frac{L^{\frac{3}{2}-k}M^{2-k}\abs{\Cl_d}}{\abs{\Cl_d(M)}} \sum_{c \in \Cl_d(M)} \overline{\Lambda(c)} a(\mathsf{S}_c^{L, M}; f) \\
&= \frac{L^{\frac{3}{2}-k}M^{2-k}\abs{\Cl_d}}{\abs{\Cl_d(M)}} \sum_{\substack{c' \in \Cl_d \\ c \in \Cl_d(M)}} \Lambda(c')\overline{\Lambda(c)} \frac{\overline{a(\mathsf{S}_{c'}; f)}a(\mathsf{S}_c^{L, M}; f)}{\langle f, f \rangle}.\end{aligned}\] 
Including the full weight $\omega_{f, N, k}$ given by (\ref{weight-def}) and summing over our basis $\mathcal{S}_k(N)^*$ we obtain
\[\begin{aligned} &\sum_{f \in \mathcal{S}_k(N)^*} \omega_{f, N, k} \prod_{p \in S} U_p^{l_p, m_p}(a_p(f), b_p(f)) \\
&\qquad= \frac{\abs{\Cl_d} L^{\frac{3}{2}-k} M^{2-k}}{\abs{\Cl_d(M)}\vol(\Gamma_0(N) \backslash \mathbb{H}_2)} \sum_{\substack{c' \in \Cl_d \\ c \in \Cl_d(M)}} \Lambda(c')\overline{\Lambda(c)} c_k^{d, \Lambda} \sum_{f \in \mathcal{S}_k(N)^*} \frac{\overline{a(\mathsf{S}_{c'}; f)} a(\mathsf{S}_c^{L, M}; f)}{\langle f, f \rangle}. \end{aligned}\]
Using Corollary \ref{crly:applicable-version},
\begin{equation}\label{weighted-sugano-analytically} \begin{aligned} &\sum_{f \in \mathcal{S}_k(N)^*} \omega_{f, N, k} \prod_{p \in S} U_p^{l_p, m_p}(a_p(f), b_p(f)) \\
&\qquad= \frac{\abs{\Cl_d} M^{2-k}L^{\frac{3}{2}-k}}{\abs{\Cl_d(M)}} \frac{d_\Lambda}{2w(-d) \abs{\Cl_d}} \sum_{\substack{c' \in \Cl_d \\ c \in \Cl_d(M)}} \Lambda(c')\overline{\Lambda(c)} [\delta(c, c', L, M) + E(N, k; c, c', L, M)].\end{aligned}\end{equation}
If $LM=1$ then the right hand side of (\ref{weighted-sugano-analytically}) is
\[\frac{d_\Lambda}{2w(-d)\abs{\Cl_d}} \sum_{c, c' \in \Cl_d} \Lambda(c') \overline{\Lambda(c)} [\delta(c, c', 1, 1) + E(k, N; c, c', 1, 1)].\]
Using \cite{KowalskiSahaTsimerman2012} Lemma 3.7 (note that our $\delta$ includes the number of the $\GL_2(\mathbb{Z})$-automorphisms in its definition) we evaluate this as
\[1 + \frac{d_\Lambda}{2w(-d)\abs{\Cl_d}} \sum_{c, c' \in \Cl_d} \Lambda(c') \overline{\Lambda(c)}E(N, k; c, c', 1, 1) = 1 + O_{d, \epsilon}(N^{-1}k^{-\frac{2}{3}}).\]
If $LM>1$ then $\det(S_{c}^{L, M}) = \det(S_{c'})(LM)^2$ and it is clear that $\delta(c, c', L, M) = 0$.  So using Corollary \ref{crly:applicable-version} again the right hand side of (\ref{weighted-sugano-analytically}) is simply 
\[\frac{\abs{\Cl_d} M^{2-k} L^{\frac{3}{2}-k}}{\abs{\Cl_d(M)}} \frac{d_\Lambda}{2w(-d)\abs{\Cl_d}}\sum_{\substack{c' \in \Cl_d \\ c \in \Cl_d(M)}} \Lambda(c') \overline{\Lambda(c)} E(N, k; c, c', L, M) = O_{d, \epsilon}(N^{-1} k^{-\frac{2}{3}}L^{1 + \epsilon} M^{\frac{3}{2} + \epsilon}).\]  \end{proof}

\begin{proposition}\label{plancherel-integral-U_p}  Let $S$ be a finite set of primes, and let $l = (l_p)_{p \in S}$, $m = (m_p)_{p \in S}$ be tuples of non-negative integers.  Let $\mu_S$ be the measure on $Y_S$.  Then
\[\int_{Y_S} \prod_{p \in S} U_p^{l_p, m_p}(a_p, b_p) d\mu_S = \delta(l, m),\]
where $\delta(l, m)$ is as in Proposition \ref{convergence-on-U_p}. \end{proposition}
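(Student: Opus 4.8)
\textbf{Proof proposal for Proposition \ref{plancherel-integral-U_p}.}

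The plan is to reduce everything to a one-prime statement by exploiting the product structure on both sides. Since $\mu_S = \prod_{p \in S} \mu_p$ is a product measure and the integrand $\prod_{p \in S} U_p^{l_p, m_p}(a_p, b_p)$ is a product of functions, each depending only on the coordinates at a single prime, Fubini's theorem (the integrals are all finite because each $\mu_p$ is supported on the compact set $I_p$ and each $U_p^{l,m}$ is continuous) reduces the claim to showing, for every prime $p$ and all non-negative integers $l, m$,
\[\int_{Y_p} U_p^{l, m}(a, b)\, d\mu_p = \begin{cases} 1 & \text{if } l = m = 0, \\ 0 & \text{if } (l,m) \neq (0,0).\end{cases}\]
First I would record this reduction and dispose of the case $l = m = 0$: by (\ref{eqn:U_p-definition}) we have $U_p^{0,0} \equiv 1$, so the assertion is exactly that $\mu_p$ has total mass $1$. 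This is part of the construction of $\mu_p$ in \S\ref{the-equidistribution-problem}; it is checked in \cite{FurusawaShalika2002}, and may also be seen from the fact that $d\widetilde{\mu}_p$ is the pushforward of normalised Haar measure on $\Sp(4)$ while $d\mu_p$ differs from $d\widetilde{\mu}_p$ by the Euler-type factor $\big(1 - (\tfrac{-d}{p})\tfrac1p\big)\Delta_{p,d,\Lambda}^{-1}$, which is designed to have total $\widetilde{\mu}_p$-integral $1$.

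For $(l, m) \neq (0, 0)$ the identity is the orthogonality relation built into the definition of the Plancherel measure for the local Bessel model. Indeed, by Theorem \ref{sugano-formula} the $U_p^{l,m}$ are, up to the normalising power $p^{-2m - 3l/2}$, the values $B_p(h_p(l,m))$ of the normalised $G(\mathbb{Z}_p)$-fixed vector in the local Bessel model of the spherical principal series with parameters $(a,b)$, and $\mu_p$ is precisely the spectral measure making the inversion (Plancherel) formula for these spherical Bessel functions hold; vanishing of the integral for $(l,m)\neq(0,0)$ is orthogonality of $B_p(h_p(l,m))$ against $B_p(h_p(0,0))=1$. I would therefore invoke the computation already carried out in \cite{KowalskiSahaTsimerman2012} (ultimately going back to \cite{FurusawaShalika2002}), where this very integral is evaluated in the course of identifying the limit measure; since our $\mu_S$ is, by construction, the same measure, and since that computation is insensitive to the choice of $(d,\Lambda)$, it applies verbatim here. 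Alternatively one can argue directly: write $\Delta_{p,d,\Lambda}^{-1}$ as a power series in $p^{-1/2}$, insert Sugano's closed form for the generating function $\sum_{l,m\geq 0} U_p^{l,m}(a,b)\,x^l y^m$ from \cite{Furusawa1993} (3.6), and integrate term by term over $I_p$ against $d\widetilde{\mu}_p$, using orthogonality of the relevant symmetric functions of $(e^{i\theta_1}, e^{i\theta_2})$ with respect to the Weyl integration measure of the compact form $\Sp(4)$.

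The only place where a fully self-contained proof does real work is this last direct computation: for general $l, m$ the functions $U_p^{l,m}$ are \emph{not} simply irreducible characters of $\Sp(4)$ — they carry the corrections involving $\lambda_p$ and $\Delta_{p,d,\Lambda}$ — so establishing the cancellation by hand requires carefully matching the corrections coming from $\Delta_{p,d,\Lambda}^{-1}$ against those already present in $U_p^{l,m}$. For our purposes it is cleanest to take the local orthogonality as an input from \cite{KowalskiSahaTsimerman2012}/\cite{FurusawaShalika2002}, so I would present the proof as the Fubini reduction followed by this citation, perhaps adding the remark that the low-degree cases $(l,m) \in \{(1,0),(2,0),(0,1)\}$ can be verified immediately from (\ref{eqn:U_p-definition}) together with the explicit shape of $d\mu_p$, which serves both as a sanity check and as exactly the input needed for the explicit-formula calculation in \S\ref{sctn:low-lying-zeros}.
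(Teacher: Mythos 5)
Your proof takes essentially the same route as the paper: the paper's proof is simply the one-line citation of \cite{KowalskiSahaTsimerman2012} Proposition 4.2, and your Fubini reduction plus appeal to that same result (with the sketch of the underlying Bessel-Plancherel orthogonality) is a correct elaboration of the same argument. The extra remarks — verifying the low-degree cases from (\ref{eqn:U_p-definition}) and noting that the computation is insensitive to the choice of $(d,\Lambda)$ — are sound and useful context but do not change the substance.
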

\begin{proof}  This is \cite{KowalskiSahaTsimerman2012} Proposition 4.2.  \end{proof}

\noindent  It is now simple to obtain the quantitative version of our local equidistribution statement:

\begin{proof}  [Proof of Theorem \ref{local-equidistribution-theorem-qualitive}]  By Weyl's criterion (\cite{IwaniecKowalski2004} \S21.1) it suffices to show that the claimed convergence holds for all $\varphi$ in a set of continuous functions whose linear combinations span $C(Y_S)$.  As $(l_p)_{p \in S}$ and $(m_p)_{p \in S}$ vary over all tuples of non-negative integers, $U_p^{l_p, m_p}$ describes such a family.  The result then follows immediately from Propositions \ref{convergence-on-U_p} and \ref{plancherel-integral-U_p}.  \end{proof}

\begin{theorem}  [Local equidistribution and independence, quantitative version]\label{local-equidistribution-theorem-quantitative}  Fix any $d$ and $\Lambda$, and finite set of primes $S$.  Let $\varphi = \prod_p \varphi_p$ be a product function on $Y_S$ such that $\varphi_p$ is a Laurent polynomial in $(a, b, a^{-1}, b^{-1})$ invariant under the action of the Weyl group generated by (\ref{weyl-group-generators}) and of total degree $d_p$ as a polynomial in $(a + a^{-1}, b + b^{-1})$.  Write $D = \prod_{p \in S} p^{d_p}$.  Then, for all $\epsilon>0$,
\[\sum_{f \in \mathcal{S}_k(N)^*} \omega_{f, k, N} \varphi((a_p(f), b_p(f))_{p \in S}) = \int_{Y_S} \varphi\: d\mu_S + O_{d, \epsilon}(N^{-1} k^{-\frac{2}{3}} D^{1 + \epsilon} \norm{\varphi}_\infty),\]
where $\norm{\varphi}_{\infty} = \max_{X_S} \abs{\varphi}$.  \end{theorem}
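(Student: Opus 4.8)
The statement is deduced from Propositions \ref{convergence-on-U_p} and \ref{plancherel-integral-U_p} by linearity, once each $\varphi_p$ is expanded in the Sugano functions $U_p^{l,m}$; the one delicate point is to control the $\ell^1$-norm of the coefficients of this expansion in terms of $\|\varphi_p\|_\infty$.

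First I would record the structural input. From the explicit formula of Theorem \ref{sugano-formula} (that is, \cite{Furusawa1993} (3.6)) one reads off that $U_p^{l,m}$ is a Weyl-invariant Laurent polynomial in $(a,b)$ of total degree exactly $l+2m$ as a polynomial in $(a+a^{-1},\,b+b^{-1})$, with nonzero part of that top degree and with all lower-order coefficients of size $O(p^{-1/2})$ (here one uses $|\lambda_p|\le 2$; in fact the correction terms decay further as the degree drop increases, as the displayed cases $U_p^{1,0},U_p^{2,0},U_p^{0,1}$ show). Since the algebra of Weyl-invariant Laurent polynomials equals $\mathbb{C}[(a+a^{-1})+(b+b^{-1}),\,(a+a^{-1})(b+b^{-1})]$, a dimension count shows that the number of pairs $(l,m)$ with $l+2m\le d$ equals the dimension of the space $\mathcal{P}_{p,\le d}$ of Weyl-invariant Laurent polynomials of degree $\le d$; together with the degree statement this forces $\{U_p^{l,m}:l+2m\le d\}$ to be a basis of $\mathcal{P}_{p,\le d}$, the change of basis to (and from) a monomial basis being triangular with respect to the degree filtration. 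In particular $\varphi_p=\sum_{l+2m\le d_p}c^{(p)}_{l,m}U_p^{l,m}$ for unique scalars $c^{(p)}_{l,m}$, and since $\varphi$ is a product function,
\[\varphi=\sum_{\substack{(l_p)_{p\in S},\,(m_p)_{p\in S}\\ l_p+2m_p\le d_p\ (\forall p)}}\ \Big(\prod_{p\in S}c^{(p)}_{l_p,m_p}\Big)\ \prod_{p\in S}U_p^{l_p,m_p}.\]

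Next I would apply Propositions \ref{convergence-on-U_p} and \ref{plancherel-integral-U_p} term by term. The main terms match: by Proposition \ref{plancherel-integral-U_p} (applied with $S=\{p\}$) we get $\int_{Y_S}\varphi\,d\mu_S=\prod_{p\in S}\int_{Y_p}\varphi_p\,d\mu_p=\prod_{p\in S}c^{(p)}_{0,0}$, which equals $\sum_{(l),(m)}\big(\prod_p c^{(p)}_{l_p,m_p}\big)\,\delta((l),(m))$, so these cancel against the main terms supplied by Proposition \ref{convergence-on-U_p} and
\[\sum_{f\in\mathcal S_k(N)^*}\omega_{f,N,k}\,\varphi-\int_{Y_S}\varphi\,d\mu_S=\sum_{(l),(m)}\Big(\prod_{p\in S}c^{(p)}_{l_p,m_p}\Big)\,O_{d,\epsilon}\!\Big(N^{-1}k^{-2/3}\!\!\prod_{p\in S}p^{\,(1+\epsilon)l_p+(3/2+\epsilon)m_p}\Big).\]
Because $l_p+\tfrac32 m_p\le l_p+2m_p\le d_p$ and $l_p+m_p\le d_p$, each product over $p$ is $\le\prod_p p^{(1+\epsilon)d_p}=D^{1+\epsilon}$, so the right-hand side is
\[\ll_{d,\epsilon}\ N^{-1}k^{-2/3}D^{1+\epsilon}\sum_{(l),(m)}\Big|\prod_{p}c^{(p)}_{l_p,m_p}\Big|=N^{-1}k^{-2/3}D^{1+\epsilon}\prod_{p\in S}\Big(\sum_{l+2m\le d_p}\big|c^{(p)}_{l,m}\big|\Big).\]

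It remains to bound $\sum_{l+2m\le d_p}|c^{(p)}_{l,m}|$ by $\|\varphi_p\|_\infty$ (the supremum over $X_p$, which dominates the supremum over the compact tempered locus $I_p\subset X_p$). Restricting $\varphi_p$ to $I_p$ and passing to the two-dimensional torus, the monomial (Fourier) coefficients of $\varphi_p$ are each $\le\|\varphi_p\|_\infty$ and there are $O(d_p^2)$ of them; composing with the degree-triangular change of basis to $\{U_p^{l,m}\}$, whose non-leading entries are $O(p^{-1/2})$ by the structural input above, yields a bound $\sum_{l+2m\le d_p}|c^{(p)}_{l,m}|\le C_{p,d_p}\|\varphi_p\|_\infty$. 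Since $S$ is fixed and finite and $\prod_{p\in S}\|\varphi_p\|_\infty=\|\varphi\|_\infty$, it suffices that $\prod_{p\in S}C_{p,d_p}$ be absorbed into a factor $D^{\epsilon}$; after that, replacing $\epsilon$ by $\epsilon/2$ finishes the proof. This last point is the heart of the matter and the step I expect to be the main obstacle: one must control the inverse of a change-of-basis matrix of size $\asymp d_p^2$, which cannot be done by mere finite-dimensionality and requires genuinely exploiting the explicit decay of the non-leading coefficients in Sugano's formula (the correction for a degree drop of $j\ge 1$ being of size $O(p^{-j/2})$), so that the compounded error stays small enough to be swallowed by $D^{\epsilon}$.
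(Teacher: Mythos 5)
Your proposal follows essentially the same reduction as the paper: expand each $\varphi_p$ in the Sugano basis $\{U_p^{l,m}\}$, apply Propositions \ref{convergence-on-U_p} and \ref{plancherel-integral-U_p} term by term, match the main terms, and bound the error by a multiple of the $\ell^1$-norm of the expansion coefficients times $D^{1+\epsilon}$. Your bookkeeping (degree filtration, the inequality $(1+\epsilon)l_p + (\tfrac32+\epsilon)m_p \le (1+\epsilon)d_p$ when $l_p+2m_p\le d_p$, the factorisation $\norm{\varphi}_\infty = \prod_p\norm{\varphi_p}_\infty$ over $X_S=\prod_p X_p$) is correct and matches the structure of the paper's argument.

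The point you flag as the ``main obstacle'' is genuinely the crux, and you are right to flag it: you must show $\sum_{l+2m\le d_p}|c^{(p)}_{l,m}| \ll_\epsilon p^{\epsilon d_p}\,\norm{\varphi_p}_\infty$ (or a bound of that strength), and this cannot be extracted from finite-dimensionality alone since the relevant space has dimension growing with $d_p$. The sketch you give --- triangularity of the change of basis with respect to the degree filtration, with off-diagonal entries of size $O(p^{-1/2})$ and faster decay with the degree drop, read off from the explicit form of Sugano's coefficients --- is the correct mechanism, but as written it is not a proof: you would need to verify the decay of \emph{all} lower-order coefficients of $U_p^{l,m}$ (not just the low-degree cases displayed in \eqref{eqn:U_p-definition}) and then actually control the inverse matrix, for instance by a geometric-series or Neumann-series estimate exploiting the $p^{-j/2}$ decay. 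You should be aware that the paper itself does not supply this step either: at exactly this point it cites the proof of Theorem 1.6 of \cite{KowalskiSahaTsimerman2012}, where the bound $R\ll D^{1+\epsilon}\norm{\varphi}_\infty$ is established. So your reconstruction of the reduction is correct and complete, but your treatment of the coefficient bound remains a sketch; to make it self-contained you would need to carry out the estimate you describe, or simply cite \cite{KowalskiSahaTsimerman2012} as the paper does.
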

\begin{proof}   We may assume (by working with a smaller $S$ if necessary) that each $\varphi_p$ is non-constant (i.e. $d_p \geq 1$).  Since the functions $U_p^{l_p, m_p}$ linearly generate $C(Y_p)$
\[\varphi_p = \sum_{0 \leq l_p \leq e_p} \sum_{0 \leq m_p \leq f_p} \widehat{\varphi_p}(l_p, m_p) U_p^{l_p, m_p},\]
where at least one of $e_p, f_p$ is $\geq 1$.  Note that by Proposition \ref{plancherel-integral-U_p} 
\begin{equation}\label{mu-integral-of-expansion} \int_{Y_S} \varphi\: d\mu_S = \prod_{p \in S} \widehat{\varphi_p}(0, 0).\end{equation}  
Moreover,
\[\begin{aligned} \sum_{f \in \mathcal{S}_k(N)^*} \omega_{f, N, k} \varphi((a_p(f), b_p(f))_{p \in S}) &= \prod_{p \in S} \sum_{\substack{0 \leq l_p \leq e_p \\ 0 \leq m_p \leq f_p}} \widehat{\varphi_p}(l_p, m_p) \sum_{f \in \mathcal{S}_k(N)^*} \omega_{f, N, k} U_p^{l_p, m_p}(a_p(f), b_p(f)) \\
&= \prod_{p \in S} \widehat{\varphi_p}(0, 0) + N^{-1}k^{-2/3} R, \end{aligned}\]
where, using Proposition \ref{convergence-on-U_p}, we have the following bounds on $R$:  write $L_\varphi = \prod_{p \in S} p^{e_p}$, $M_\varphi = \prod_{p \in S} p^{m_p}$, then
\[R \ll_{\epsilon} \sum_{L \mid L_\varphi} \sum_{M \mid M_\varphi} L^{1+\epsilon} M^{3/2+\epsilon} \prod_{p \in S} \abs{\widehat{\varphi_p}(v_p(L), v_p(M))}.\]  
Comparing with (\ref{mu-integral-of-expansion}) it suffices to show from this that $R \ll D^{1+\epsilon} \norm{\varphi}_{\infty}$.  This is carried out in the proof of Theorem 1.6 of \cite{KowalskiSahaTsimerman2012} and we do not repeat the details.  \end{proof}

\section{Background on $L$-functions and low-lying zeros}\label{sctn:background-on-l-functions}

\noindent For the remainder of the paper we restrict to modular forms of squarefree level $N$, and take the weight $\omega_{f, N, k}$ to be defined with $d=4$ and $\Lambda = \mathbf{1}$.\\

\noindent \textbf{Background on $L$-functions.}  Given an irreducible automorphic representation $\pi$ of $\GSp_4$, one can form the Langlands $L$-function $L(s, \pi, r)$ for any representation of the dual group $r : \GSp_4(\mathbb{C}) \to \GL_n(\mathbb{C})$.  We take $n=4$ and the representation $r$ to be the tautological one, whence we obtain the so-called spin $L$-function of $\pi$.  We will also restrict our attention to representations $\pi$ which are self-dual, since the representations generated by modular forms with trivial character are self-dual.  For $p$ a finite prime we write the local Euler factor as
\[L_p(s, \pi) = \prod_{i=1}^4 (1-\alpha_i(p)p^{-s})\]
so that the (finite part of) the $L$-function is
\[L(s, \pi) = \prod_p L_p(s, \pi)^{-1}.\]
The $\alpha_i(p)$ are the local factors, defined via the local Langlands correspondence for $\GSp_4$.  At the unramified primes (those where $\pi_p$ is spherical) these are the Satake parameters.  Using the notation of \S\ref{the-equidistribution-problem} these are $(a_p(\pi), b_p(\pi)) = (\sigma(p), \sigma(p)\chi_1(p))$.  Thus labelling appropriately we have
\[\begin{aligned} \alpha_1(p) &= \alpha_2(p)^{-1} = a_p(\pi),\\
\alpha_3(p) &= \alpha_4(p)^{-1} = b_p(\pi). \end{aligned}\]
At the ramified primes (those where $\pi_p$ is not spherical) the $\alpha_i(p)$ can be zero; it is a delicate question to say precisely what the local factor are in these case.  For our consideration of low-lying zeros attached to these $L$-functions in \S\ref{sctn:low-lying-zeros} we will require some bounds on these quantities.  Whilst the Ramanujan conjecture, proved by Weissauer, provides the optimal bound for the local parameters at unramified places (certainly the most important case in general) for non-CAP representations, we are not aware of such results for ramified places in the literature.  We make the following assumption: if $\pi$ is non-CAP then there exists $0 \leq \theta<1/2$ such that
\begin{equation}\label{eqn:bound-towards-ramanujan}\abs{\alpha_i(p)} \leq p^\theta.\end{equation}
We suspect this might be known, expecially given that we are assuming squarefree level.  It certainly follows if we assume transfer of $\pi$ to $\GL_4$ (which has been proven for $N=1$ in \cite{PitaleSahaSchmidt2013}), as non-CAP representations will have cuspidal transfer so one can use \cite{MuellerSpeh2004} Proposition 3.3 (the ramified analogue of \cite{LuoRudnikSarnak1999} Theorem 2) to take $\theta = \frac{1}{2} - \frac{1}{4^2+1}$.\\

\noindent  However we must also take in to account some CAP representations, since the representations attached to Saito--Kurokawa lifts are so.  These are certain cuspidal automorphic representations of $\PGSp_4$ whose local factors do not satisfy the Ramanujan conjecture: at almost all places some of the local factors are as large as $p^{1/2}$.  For these representations the expected transfer to $\GL_4$ is no longer cuspidal (and in particular (\ref{eqn:bound-towards-ramanujan}) will not hold). It turns out that the ramified local factors for these representations are large enough that we have to handle these representations exceptionally.  We will explain our resolution of this issue in \S\ref{sctn:saito-kurokawa-lifts}.  Although we restrict to squarefree level to deal with this, we expect this issue should really be minor in any case.  At the unramified places the Saito--Kurokawa contribution is already handled in Theorem \ref{local-equidistribution-theorem-quantitative}.\\
 
\noindent We now continue with the definition of the $L$-function.  For the infinite place we have a gamma factor determined by the representation type of $\pi_\infty$.  When $\pi = \pi_f$ is an irreducible constituent of the representation generated by a Siegel cusp form $f$ of weight $k$ the gamma factor is
\begin{equation}\begin{aligned}\label{eqn:weight-k-gamma-factor}\gamma(s, \pi_f) &= (2\pi)^{-2s} \Gamma\left(s + \frac{1}{2}\right)\Gamma\left(s + k - \frac{3}{2}\right) \end{aligned}\end{equation}
We shall assume the existence of a ``nice $L$-function theory'': there exists an integer $q(\pi)$, divisible only by ramified primes of $\pi$, such that the completed $L$-function
\[\Lambda(s, \pi) = q(\pi)^{s/2} \gamma(s, \pi)L(s, \pi)\]
extends to a meromorphic function satisfying the functional equation
\[\Lambda(s, \pi) = \varepsilon(\pi) \Lambda(1-s, \pi).\]
Here $\varepsilon(\pi) \in \{\pm 1\}$ is determined by the local $\varepsilon$-factors, in turn defined by the local Langlands correspondence.  A ``nice $L$-function theory'' would follow from (\cite{Piatetski-Shapiro1997}), once it has been verified in all cases that the local factors defined there agree with those of defined by the local Langlands correspondence.  Given such an $L$-function we define the analytic conductor to be $C(\pi) = q(\pi)q_\infty(\pi)$, where $q(\pi)$ is the factor appearing in the functional equation, and for $\pi$ the representation generated by a weight $k$ Siegel modular form $q_\infty(\pi) := k^2$.\\

\noindent \textbf{Background on low-lying zeros.}  We are interested in the low-lying zeros of the $L(s, \pi)$ on the critical line $s=1/2$.  The key to this is an explicit formula: for example from \cite{IwaniecKowalski2004} Theorem 5.12 we have, for $h : \mathbb{R} \to \mathbb{R}$ an even Schwartz function with Fourier transform $\widehat{h}$,
\begin{equation}\label{eqn:explicit-formula} \begin{aligned}\sum_\rho h\left(\frac{\gamma}{2\pi}\right) &= \widehat{h}(0) \log q(\pi) + \frac{1}{2\pi} \int_\mathbb{R} \left(\frac{\gamma'}{\gamma}\left(\frac{1}{2}+it, \pi\right) + \frac{\gamma'}{\gamma}\left(\frac{1}{2}-it, \pi\right)\right)h\left(\frac{t}{2\pi} \right) dt \\
&\qquad-2\sum_p \log p  \sum_{m \geq 1} c(\pi, p^m) p^{-m/2} \widehat{h}\left(m \log p\right),\end{aligned}\end{equation}
where the sum on the left hand side is over zeros $\rho = \frac{1}{2} + i\gamma$, and the double sum on the right involves moments of the local factors of the representation:
\begin{equation}\label{eqn:moments-of-local-factors-def} c(\pi, p^m) = \sum_{i=1}^4 \alpha_i(p)^m.\end{equation}
However, in order to have enough zeros to do a meaningful statistical study we will average over a suitable family of representations $\pi$ as above, which we now describe:  let $\mathcal{S}_k(N)^{\#}$ be an orthogonal basis of $\mathcal{S}_k(N)$ consisting of eigenfunctions of all $T(p)$ and $T_1(p^2)$ when $p \nmid N$.  Then for any $f \in \mathcal{S}_k(N)^\#$ we have an associated cuspidal automorphic representation of $\GSp_4(\mathbb{A})$.  Let $\pi_f$ be any irreducible consitutent of this, and write $C(\pi_f)$ be the analytic conductor as above.\\

\noindent We will consider the representations we obtain as we vary $f \in \mathcal{S}_k^{\#}(N)$, in particular there is no restriction to ``newforms''.  It may be possible to set up the problem in terms of newforms using the description in \cite{Schmidt2005a}, but we opt not to so that we can apply Theorem \ref{local-equidistribution-theorem-quantitative} directly.  As described in the introduction, this means that as we vary over $f \in \mathcal{S}_k(N)^{\#}$, the (isomorphism class of) a representation may be repeated.\\

\noindent In any case when working with forms that are not necessarily ``new'' the $q(\pi_f)$ is by no means the same for each element in our family.  It is therefore prudent to introduce a log-average conductor, defined by
\[\log C_{k, N} =  \frac{1}{\sum_{f \in \mathcal{S}_k(N)^\#} \omega_{f, k, N}} \sum_{f \in \mathcal{S}_k(N)^\#} \omega_{f, k, N} \log C(\pi_f).\]
Recall that $N$ is squarefree.  From Table 3 of \cite{Schmidt2005a}, particularly the fact that the conductors of representations which have invariant vectors for $P_1$ (the local version of $\Gamma_0(N)$) have conductor $\leq 2$, it easily follows that $C_{k, N} \ll N^2$.  By using the fact that representations containing newforms for $P_1$ have conductor $\geq 1$ one can argue by induction to obtain a lower bound and deduce that 
\begin{equation}\label{eqn:conductor-is-polynomial}\log C_{k, N} \asymp \log N.\end{equation}
Finally, let $\Phi$ be an even Schwartz function (the Fourier transform of which we will eventually assume to have sufficiently small compact support), and let
\[D(k, N; \Phi) = \frac{1}{\sum_{f \in \mathcal{S}_k(N)^\#} \omega_{f, k, N}} \sum_{f \in \mathcal{S}_k(N)^\#} \omega_{f, k, N} D(\pi_f; \Phi),\]
where
\[D(\pi_f; \Phi) = \sum_\rho \Phi\left(\frac{\gamma}{2\pi} \log C_{k, N} \right).\]
The quantity $D(k, N; \Phi)$ measures the low-lying zeros of the $L$-functions associated to the representations in our family.  \\

\section{Saito--Kurokawa lifts}\label{sctn:saito-kurokawa-lifts}

\noindent Recall that we stated in the preceding section that certain representations, namely CAP representations, require special treatment.  To this end we begin by recalling the description of Saito--Kurokawa lifts from \cite{Schmidt2005b}; at the end of this section we will show that these essentially exhaust all problem cases in our context.  First take an irreducible cuspidal automorphic representation $\pi$ of $\PGL_2$, and assume that $\pi$ corresponds to a holomorphic cusp form of weight $2k-2$, so that $\pi_\infty$ is the discrete series representation with lowest weight $2k-2$.  Let $\Sigma$ be the set of places at which $\pi$ is a discrete series.  We pick a set $S$ with $\infty \in S \subset \Sigma$ such that $(-1)^{\abs{S}} = \varepsilon(\pi)$, with the usual $\varepsilon$-factor of the cuspidal automorphic representation $\pi$.  Define a representation $\pi_S$ of $\GL_2$ by
\[\pi_S = \begin{cases} 1_v & \text{ if } v \notin S, \\ \St_v & \text{ if } v \in S, \end{cases}\]
where $\St_v$ denotes the Steinberg representation.  At the infinite place this is taken to mean the lowest discrete series representation.  $\pi_S$ is in fact a constituent of a globally induced representation, so it is automorphic.  For any choice of $S$ as above a lift $\Pi(\pi \times \pi_S)$ can be defined; it is an irreducible cuspidal automorphic representation of $\PGSp_4$.\\

\noindent Most importantly for us is a case when $\pi$ corresponds to a newform $g \in \mathcal{S}_{2k-2}^{(1)}(M)$ of squarefree level $M$ (the superscript $^{(1)}$ refers to modular forms of degree $1$, i.e. on $\SL_2$) considered in detail in \cite{Schmidt2007}, where $S$ is chosen to be the set of primes $p \mid M$ for which the newform $g$ has Atkin--Lehner eigenvalue $-1$.  The lift $\SK(\pi) = \Pi(\pi \times \pi_S)$ is then an irreducible cuspidal automorphic representation of $\PGSp_4$.  The local component $\SK(\pi)_\infty$ is the holomorphic discrete series representation of $\PGSp_4(\mathbb{R})$ with scalar minimal $K$-type of weight $(k, k)$.  This is the $\infty$-type of the representation attached to a holomorphic Siegel modular form; in fact it follows from Theorem 5.2 of \cite{Schmidt2007} that there is a unique (up to scalars) modular form $f \in \mathcal{S}_k(M)$ such that $\Phi_f$ generates the representation $\Pi(\pi \times \pi_S)$.  Indeed this function $f$ is the classical Saito--Kurokawa lift $\SK(g)$ of the newform $g$ as defined in \cite{ManickamRamakrishnanVasudevan1993}.  Using results from \cite{Piatetski-Shapiro1983}, it is also described in the proof of Theorem 5.2 of \cite{Schmidt2007} how the representation $\SK(\pi)$ occurs with multiplicity one in the space of automorphic forms on $\PGSp_4$.\\

\noindent Our $L$-functions however are formed from $\mathcal{S}_k(N)^{\#}$ and therefore we must take in to account that whilst there is a unique modular form $f$ of level $M \mid N$ whose representation is $\pi_f$, there will be more forms of level $N$ describing the same representation.  We shall now count how many vectors in the representation $\SK(\pi)$ give rise to modular forms of level $N$:

\begin{lemma}\label{lem:number-of-SK-oldforms}  Let $\pi$ be the cuspidal automorphic representation of $\PGL_2$ associated to a classical newform $g$ of level $M \mid N$ ($N$ squarefree), and $\SK(\pi)$ its Saito--Kurokawa lift.  Then the vector space consisting of modular forms $f \in \mathcal{S}_k(N)$ such that $\Phi_f \in \SK(\pi)$ has dimension $3^r$, where $r$ is the number of prime divisors of $N/M$.\end{lemma}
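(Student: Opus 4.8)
The plan is to convert the statement into a product of local computations via the adelization dictionary and then read off the local dimensions from the classification of Iwahori-spherical representations of $\GSp_4(\mathbb{Q}_p)$. Since $\SK(\pi)$ is irreducible, write $\SK(\pi) \simeq \bigotimes'_v \SK(\pi)_v$. Under $f \mapsto \Phi_f$, a modular form $f \in \mathcal{S}_k(N)$ with $\Phi_f \in \SK(\pi)$ corresponds precisely to a vector of $\SK(\pi)$ which (i) lies in the one-dimensional space of lowest weight vectors for the scalar minimal $K_\infty$-type $(k,k)$ of the holomorphic discrete series $\SK(\pi)_\infty$, and (ii) is right-invariant under the local congruence subgroup attached to $\Gamma_0(N)$ at every finite prime; because $N$ is squarefree, this subgroup is $G(\mathbb{Z}_p)$ for $p \nmid N$ and the parahoric $P_1$ (the local analogue of $\Gamma_0(p)$, in the notation of \cite{Schmidt2005a}) for $p \mid N$. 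Consequently
\[\dim\{f \in \mathcal{S}_k(N) : \Phi_f \in \SK(\pi)\} = \prod_{p \mid N} \dim \SK(\pi)_p^{P_1},\]
since the archimedean tensor factor and the factors at $p \nmid N$ (where $\SK(\pi)_p$ is unramified) are all one-dimensional.

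I would then evaluate the right-hand side one prime at a time. Applying the same adelization argument at level $M$ gives $\prod_{p \mid M} \dim \SK(\pi)_p^{P_1} = \dim\{f \in \mathcal{S}_k(M) : \Phi_f \in \SK(\pi)\}$, which equals $1$ by Theorem 5.2 of \cite{Schmidt2007} (the classical Saito--Kurokawa lift $\SK(g)$ being the unique such form up to scalars); as each factor is a positive integer, $\dim \SK(\pi)_p^{P_1} = 1$ for every $p \mid M$. For $p \mid N$ with $p \nmid M$, the component $\pi_p$ of $\PGL_2$ is unramified, so $\SK(\pi)_p$ is the unramified Saito--Kurokawa local representation; by the description of its local factors (see \cite{Schmidt2005b}) this is the representation of type IIb in the classification of \cite{SallyTadic1993}, for which $\dim V^{P_1} = 3$ (Table 3 of \cite{Schmidt2005a}). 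Multiplying the local contributions yields $3^r$, where $r = \#\{p : p \mid N/M\}$.

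The step I expect to be the main obstacle is the local input at the primes dividing $N/M$: one must verify that the unramified Saito--Kurokawa representation is indeed of type IIb, so that the tabulated value $\dim V^{P_1} = 3$ applies, and keep careful track that $P_1$ (rather than the Klingen parahoric $P_2$) is the local subgroup corresponding to the classical $\Gamma_0(p)$ in Saha's adelic conventions. A reassuring cross-check is that the reducible unramified principal series $\chi \times \chi \rtimes \sigma$ has a four-dimensional space of $P_1$-fixed vectors, decomposing along its two composition factors (types IIa and IIb). By contrast, the archimedean one-dimensionality — coming from the holomorphic discrete series with scalar minimal $K$-type as in \cite{AsgariSchmidt2001} — and the fact that $\dim V^{G(\mathbb{Z}_p)} = 1$ for the unramified $\SK(\pi)_p$ are entirely standard.
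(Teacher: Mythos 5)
Your proof is correct and follows essentially the same route as the paper: reduce to a product of local dimensions of $P_1$-fixed vectors, get $1$ at primes dividing $M$, and get $3$ at primes dividing $N/M$ by identifying $\SK(\pi)_p$ there as $\chi 1_{\GL_2}\rtimes\chi^{-1}$ (type IIb) and reading Table 3 of Schmidt (2005a). The only small variation is that you deduce the local dimension $1$ at $p\mid M$ from the global uniqueness of $\SK(g)$ at level $M$ (via Theorem 5.2 of Schmidt 2007), whereas the paper quotes the corresponding local statement directly from that reference; both are fine.
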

\begin{proof}  Set 
\[P_1(p) = \left\{ \left(\begin{matrix} A & B \\ C & D \end{matrix}\right) \in \GSp_4(\mathbb{Z}_p);\: C \equiv 0 \bmod N\mathbb{Z}_p \right\}.\]
To count the number of vectors in $\SK(\pi)$ which come from level $N$ modular forms it suffices to count the number of vectors invariant under $\prod_p P_1(p) = \prod_{p \mid N} P_1(p) \prod_{p \nmid N} \GSp_4(\mathbb{Z}_p)$.  It is shown in \cite{Schmidt2007} that $\Pi(\pi \times \pi_S)_p$ has an essentially unique (i.e. up to scalars) vector under the right-action of $P_1(p)$ for each $p \mid M$.  For $p \nmid N$ there is an essentially unique vector for the action of $\GSp_4(\mathbb{Z}_p)$.  The case $p \mid N/M$ is not written down in the work of Schmidt but follows easily from it: we know when $p \mid N/M$ that $\pi_p = \pi(\chi, \chi^{-1})$ is a spherical principal series representation of $\PGL_2(\mathbb{Q}_p)$, and by \cite{Schmidt2005b} \S7 we have $\Pi(\pi \times \pi_S)_p \simeq \chi 1_{\GL_2} \rtimes \chi^{-1}$ (in the notation of \cite{SallyTadic1993}).  By Table 3 in \cite{Schmidt2005a} this has three linearly independent vectors invariant under $P_1(p)$.  Piecing this together for each prime dividing $N/M$ we obtain the statement of the lemma.\end{proof}

\noindent Lemma \ref{lem:number-of-SK-oldforms} does not give us the modular forms $f \in \mathcal{S}_k(N)$ explicitly, but we can easily provide a basis for the vector space it considers via classical means:

\begin{lemma}\label{lem:basis-for-SK}  Let $\pi$ be the cuspidal automorphic representation of $\PGL_2$ associated to a classical newform $g$ of level $M$, and $\SK(\pi)$ its Saito--Kurokawa lift.  Let $\SK(g)$ be the classical Saito--Kurokawa lift of $g$.  Define\begin{footnote}{The subscripts are thus to be consistent with the notation of \cite{Schmidt2005a}.}\end{footnote} the following maps on Fourier coefficients:
\[f(Z) = \sum_{T > 0} a(T; f) e(\tr(TZ)) \mapsto \begin{cases} \sum_{T > 0} a(T; f) e(\tr(pTZ)) =: T_1(p, f), \\
\sum_{T > 0} a(pT; f) e(\tr(pTZ)) =: T_3(p, f).\end{cases}\]
Define a set for squarefree multiples of $M$ inductively as follows: $\mathcal{B}_M = \{\SK(g)\}$, and if $N'$ is a squarefree multiple of $M$ and $p \nmid N'$ is a prime set $\mathcal{B}_{N'p} = \{f, T_1(p, f), T_3(p, f);\: f \in \mathcal{B}_{N'}\}$.  Then, for any squarefree multiple $N$ of $M$, $\mathcal{B}_{N}$ is a basis for the space of modular forms $f \in \mathcal{S}_k(N)$ such that $\Phi_f \in \SK(\pi)$.\end{lemma}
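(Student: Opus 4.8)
The plan is to induct on $r$, the number of prime divisors of $N/M$, using Lemma \ref{lem:number-of-SK-oldforms} to control the dimension of the target space $W_N := \{f \in \mathcal{S}_k(N);\: \Phi_f \in \SK(\pi)\}$. The base case $r=0$ (that is, $N = M$) is precisely the uniqueness statement recalled just before Lemma \ref{lem:number-of-SK-oldforms}: by Theorem 5.2 of \cite{Schmidt2007} the only form in $\mathcal{S}_k(M)$, up to scalars, whose adelization lies in $\SK(\pi)$ is $\SK(g)$, so $W_M = \mathbb{C}\cdot\SK(g)$ and $\mathcal{B}_M$ is a basis. For the inductive step write $N = N'p$ with $p \nmid N'$ and assume $\mathcal{B}_{N'}$ is a basis of $W_{N'}$. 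Since $\lvert\mathcal{B}_N\rvert = 3\lvert\mathcal{B}_{N'}\rvert = 3^r = \dim W_N$, where the last equality is Lemma \ref{lem:number-of-SK-oldforms}, it suffices to prove that $\mathcal{B}_N \subseteq W_N$ and that $\mathcal{B}_N$ is linearly independent.

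For the inclusion, observe first that $T_1(p,\cdot)$ and $T_3(p,\cdot)$ are the usual classical level-raising operators: $T_1(p,f)(Z) = f(pZ) = p^{-k} f\vert_k \diag(p,p,1,1)$, while $T_3(p,f)$ is (a normalization of) $\sum_{X} f\vert_k u(X/p)$ with $X$ running over symmetric integer matrices modulo $p$. A routine computation, conjugating the modularity group of $f$ by the relevant elements, shows that both operators carry $\mathcal{S}_k(N')$ into $\mathcal{S}_k(N'p)$; and of course $f$ itself lies in $\mathcal{S}_k(N') \subseteq \mathcal{S}_k(N'p)$ with $\Phi_f$ unchanged, so $\Phi_f \in \SK(\pi)$ by the inductive hypothesis. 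At the adelic level, by the dictionary of \S\ref{reps-general} (exactly as in the computation in Lemma \ref{global-bessel-reduction-lemma}), $\Phi_{T_1(p,f)}$ is a right translate of $\Phi_f$ by $\diag(p,p,1,1) \in G(\mathbb{Q}_p)$, and $\Phi_{T_3(p,f)}$ is an average of right translates of $\Phi_f$ by the unipotent elements $u(X/p) \in G(\mathbb{Q}_p)$. Since $\SK(\pi)$ is a $G(\mathbb{A})$-stable subspace of the space of automorphic forms, all of these translates and averages remain in $\SK(\pi)$; hence $\mathcal{B}_N \subseteq W_N$.

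For linear independence I would exploit the tensor product decomposition $\SK(\pi) \cong \bigotimes_v \SK(\pi)_v$. Fixing the one-dimensional $K_\infty$-type pinned down by holomorphy and weight $k$, and the open compacts $P_1(v)$ for $v \mid N$ and $G(\mathbb{Z}_v)$ for $v \nmid N$, one obtains $W_N \cong \bigotimes_{v \mid N} \SK(\pi)_v^{P_1(v)}$, a space that is one-dimensional at each $v \mid M$ and, by Table 3 of \cite{Schmidt2005a} applied to $\SK(\pi)_v \simeq \chi 1_{\GL_2} \rtimes \chi^{-1}$ (as used in the proof of Lemma \ref{lem:number-of-SK-oldforms}), three-dimensional at each $v \mid N/M$. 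Under this identification the three operations $\mathrm{id}$, $T_1(p,\cdot)$, $T_3(p,\cdot)$ correspond to applying the identity and two fixed elements $\eta_1, \eta_3 \in G(\mathbb{Q}_p)$ to the vector distinguished at the place $p$, and the point is that the three resulting local vectors span the three-dimensional space $\SK(\pi)_p^{P_1(p)}$. Granting this, $\mathcal{B}_N$ is the image of a tensor product of bases of the local spaces, hence a basis of $W_N$, completing the induction. (Alternatively, one can verify linear independence of $\mathrm{id}$, $T_1(p,\cdot)$, $T_3(p,\cdot)$ on $W_{N'}$ directly from Fourier expansions: a relation $\sum_{f\in\mathcal{B}_{N'}}(\alpha_f f + \beta_f T_1(p,f) + \gamma_f T_3(p,f)) = 0$, restricted to Fourier indices not of the form $pT$, kills the $T_1$ and $T_3$ terms, and one then separates $T_1(p,f)$ from $T_3(p,f)$ using that their coefficients at index $pS$ are $a(S;f)$ versus $a(pS;f)$.)

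The \textbf{main obstacle} is the last point: making precise the dictionary between the classical operators $T_1(p,\cdot)$, $T_3(p,\cdot)$ and the local representation theory, i.e.\ checking that the images of the distinguished vector under these operators actually form a basis of the three-dimensional $P_1(p)$-fixed space of \cite{Schmidt2005a}, Table 3 --- this is the content alluded to (``not written down in the work of Schmidt but follows easily from it'') in the proof of Lemma \ref{lem:number-of-SK-oldforms}. Everything else --- modularity of the level-raised forms, stability of $\SK(\pi)$ under translation, and the dimension bookkeeping --- is routine given Lemma \ref{lem:number-of-SK-oldforms} and the adelic formalism of \S\ref{reps-general}.
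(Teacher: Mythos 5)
Your overall structure (induction on $r$, dimension count from Lemma \ref{lem:number-of-SK-oldforms}, verifying inclusion plus linear independence) is sound, and your careful verification that $\mathcal{B}_N \subseteq W_N$ via the adelic dictionary is a point the paper takes for granted. However, both of your arguments for linear independence stop short of the key idea the paper actually uses, and the gap you flag as the ``main obstacle'' is genuine, not cosmetic.

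The paper's proof is entirely classical: it observes that, because the operators $T_1$ and $T_3$ only see Fourier coefficients $a(eT;\SK(g))$ for $e$ dividing some squarefree $d \mid N/M$, a putative nontrivial dependence among the elements of $\mathcal{B}_N$ reduces (by comparing Fourier coefficients) to a nontrivial relation of the shape $\sum_{e\mid d} c_e\, a(eT; \SK(g)) = 0$ for all $T>0$. To rule this out, the paper uses two facts specific to Saito--Kurokawa lifts: (i) $\SK(g)$ is a $T(p)$-eigenform for $p\nmid M$, and (ii) its Fourier coefficients depend only on $\det T$. Together these give a recursion $a(pT;\SK(g)) = (\lambda_p - p^{k-1} - p^{2k-3})\,a(T;\SK(g))$, which lets one shorten any minimal dependence relation and reach a contradiction. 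Your ``alternative'' Fourier-expansion sketch reaches a similar staging point (restrict to indices prime to $p$, then look at indices $pS$), but at that point you only know vanishing of \emph{some} Fourier coefficients of a combination of elements of $\mathcal{B}_{N'}$ --- you cannot invoke the inductive linear independence of $\mathcal{B}_{N'}$ directly, since you do not have vanishing of the full form. Closing this requires exactly the eigenvalue/determinant argument above, which your sketch omits.

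Your primary (local representation-theoretic) route is a genuinely different strategy and, if completed, would be cleaner conceptually: it reduces everything to the claim that the images of the newvector under $1$, $\eta_1$, $\eta_3$ span the three-dimensional $P_1(p)$-fixed space of $\chi 1_{\GL_2}\rtimes\chi^{-1}$. But you correctly note this is not established by the cited references, and verifying it would require computing the action of specific coset representatives on the Iwahori-fixed subspace --- a nontrivial local computation that the paper sidesteps by going through Fourier coefficients of $\SK(g)$. As it stands, neither route in your proposal is complete; you should either carry out the Hecke-relation manipulation on $\sum_{e\mid d} c_e\,a(eT;\SK(g))=0$ (the paper's route) or supply the local spanning computation.
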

\begin{proof}  It suffices to prove that $\mathcal{B}_{N}$ is a linearly independent set since if so it by construction has the dimension required by Lemma \ref{lem:number-of-SK-oldforms}.  By writing out a dependence relation and picking off leading Fourier coefficients we see that proving linear independence boils down to showing that there are no nontrivial dependence relations of the form
\begin{equation}\label{eqn:dependence-relation}\sum_{e \mid d} c_e a(eT; \SK(g)) = 0, \text{ for all }T>0\end{equation}
where $d$ is a fixed divisor of $N/M$.  Suppose we have such a nontrivial relation involving a minimal number of divisors $e$.  Now for any $p \nmid M$ we have that $\SK(g)$ is an eigenfunction of $T(p)$, hence there is $\lambda \in \mathbb{C}$ such that
\[\lambda a(T; \SK(g)) = a(pT; \SK(g)) + p^{k-1} a(T; \SK(g)) + p^{2k-3}a(T; \SK(g)).\]
This follows from using the formula for the action of $T(p)$ on Fourier expansions and the fact that the Fourier coefficients of a Saito--Kurokawa lift depend only on the determinant of the indexing matrix.  Repeatedly using this allows us to derive from (\ref{eqn:dependence-relation}) a dependence relation involving fewer $e$, and thence a contradiction.  \end{proof}

\noindent Now we use a result of Brown and the structure of the basis in \ref{lem:basis-for-SK} to show that the weights $\omega_{f, k, N}$ are small for any $f$ this basis:

\begin{theorem}\label{thm:brown}[Brown, \cite{Brown2007} Theorem 1.1]  Let $M$ be a squarefree positive integer with $m$ prime divisors, $g \in \mathcal{S}_k^{(1)}(M)$ be a newform, and let $\SK(g) \in \mathcal{S}_k(M)$ be the classical Saito--Kurokawa lift of $g$.  Write $\Sh(g)$ for the Shimura lift of $g$, and $a(n; \Sh(g))$ for its Fourier coefficients.  Let $D<0$ be a fundamental discriminant such that $\gcd(M, D)=1$ and $a(\abs{D}, \Sh(g)) \neq 0$.  Then
\begin{equation}\label{eqn:SK-inner-product-relation}\langle \SK(g), \SK(g) \rangle = \mathcal{B}_{k, M}  \frac{\abs{a(\abs{D}; \Sh(g))}^2 L(1, \pi_g)}{\pi \abs{D}^{k-\frac{3}{2}} L(\frac{1}{2}, \pi_g \times \chi_D)} \langle g, g\rangle,\end{equation}
where 
\[\mathcal{B}_{k, M} = \frac{M^k(k - 1) \prod_{i=1}^m (p_i^4+1)}{2^{m+3} 3[\Sp_4(\mathbb{Z}):\Gamma_0(M)][\Gamma_0(M):\Gamma_0(4M)]}.\]
\end{theorem}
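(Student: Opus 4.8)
The identity is, up to reconciling normalisation conventions, Theorem 1.1 of \cite{Brown2007}, and the plan is to carry out that reconciliation. Two conventions differ from those of \cite{Brown2007}. First, our Petersson products are normalised by a volume factor (see (\ref{petersson-definition}) in degree $2$, and the analogue with $1/\vol(\Gamma_0(M)\backslash\mathbb{H}_1)$ in degree $1$), whereas \cite{Brown2007} uses the bare integrals, which we denote $\langle\cdot,\cdot\rangle_{\mathrm{cl}}$. Second, the spin $L$-functions here are analytically normalised, with functional equation relating $s$ and $1-s$, rather than in the classical normalisation.

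First I would restate Brown's formula with $\langle\cdot,\cdot\rangle_{\mathrm{cl}}$ on both sides and then divide through, using $\langle\SK(g),\SK(g)\rangle = \langle\SK(g),\SK(g)\rangle_{\mathrm{cl}}/\vol(\Gamma_0(M)\backslash\mathbb{H}_2)$ and $\langle g,g\rangle_{\mathrm{cl}} = \vol(\Gamma_0(M)\backslash\mathbb{H}_1)\,\langle g,g\rangle$. Writing $\vol(\Gamma_0(M)\backslash\mathbb{H}_2) = [\Sp_4(\mathbb{Z}):\Gamma_0(M)]\,\vol(\Sp_4(\mathbb{Z})\backslash\mathbb{H}_2)$ and the analogue in degree $1$, and noting that the half-integral weight (Shimura--Waldspurger) side of \cite{Brown2007} naturally involves level $4M$ --- which produces the factor $[\Gamma_0(M):\Gamma_0(4M)]$ --- the combinatorial part of $\mathcal{B}_{k,M}$ emerges; the factors $M^k(k-1)$, $\prod_i(p_i^4+1)$ and the numerical constants $2^{m+3}$ and $3$ are then read off directly from Brown's constant.

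Next I would track the change of $L$-function normalisation. Passing from the classical to the analytic normalisation moves the relevant special values to the edge $s=1$ and the centre $s=\tfrac12$ and introduces powers of $2\pi$ and of $\abs{D}$; these account for the explicit $\pi$ and $\abs{D}^{k-3/2}$ displayed in the statement, with any residual constant absorbed into $\mathcal{B}_{k,M}$. Finally one checks that the lift in \cite{Brown2007} is exactly the classical Saito--Kurokawa lift $\SK(g)$ of \cite{ManickamRamakrishnanVasudevan1993} used here, and that $\Sh(g)$ is the Shimura lift with the normalisation of \cite{Brown2007}, so that no stray scalar survives in the ratio $\langle\SK(g),\SK(g)\rangle/\langle g,g\rangle$. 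The only real difficulty is bookkeeping: matching the archimedean gamma factors and the powers of $2\pi$ through both changes of normalisation so that the constant comes out as $\mathcal{B}_{k,M}$ on the nose. For the application in \S\ref{sctn:saito-kurokawa-lifts} one in fact only needs a crude upper bound for $\mathcal{B}_{k,M}$ (polynomial in $M$, with the indicated dependence on $k$), which follows from the above without tracking the precise constant.
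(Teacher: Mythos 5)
The paper itself gives no proof of this theorem: it is stated as a direct citation of Theorem~1.1 of \cite{Brown2007}, with Brown's result already translated (implicitly) into the paper's conventions of a volume-normalised Petersson product and analytically normalised $L$-functions. Your proposal takes precisely this route --- invoke Brown's theorem and reconcile the Petersson normalisation, the $L$-function normalisation, the level-$4M$ Kohnen/Waldspurger side, and the definition of the Saito--Kurokawa lift --- which is the same approach, merely made explicit; and your closing observation, that for Corollary~\ref{crlry:to-brown} only a crude upper bound on $\mathcal{B}_{k,M}$ is actually required, correctly identifies why the paper does not belabour the exact constant.
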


\begin{corollary}\label{crlry:to-brown}  Let $M$ be a squarefree positive integer and $g \in \mathcal{S}_k^{(1)}(M)$ be a newform, and let $\SK(g) \in \mathcal{S}_k(M)$ be the classical Saito--Kurokawa lift of $g$.  Let $\mathcal{S}_k^{(1)}(M)^{\#}_{\text{new}}$ denote an orthogonal basis for the space of newforms.  Then, for any $\delta > 0$,
\[\sum_{g \in \mathcal{S}_k^{(1)}(M)^{\#}_{\text{new}}} \omega_{\SK(g), M, k} \ll_\delta \frac{1}{M^{5-\delta} k^{2-\delta}}.\]\end{corollary}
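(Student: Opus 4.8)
The plan is to substitute Brown's formula (Theorem~\ref{thm:brown}) into the definition of the weight $\omega_{\SK(g),M,k}$, observe that all of the half-integral weight data cancels, and thereby reduce the corollary to a standard first moment bound for twisted central $L$-values of the $g$'s. First I would unwind the weight: for $d=4$, $\Lambda=\mathbf 1$ the class group $\Cl_4$ is trivial, represented by $\mathsf S=1_2$, so that $a(4,\mathbf 1;\SK(g))=a(1_2;\SK(g))$ and $c_k^{4,\mathbf 1}=\tfrac{\sqrt\pi}{4}(4\pi)^{3-2k}\Gamma(k-\tfrac32)\Gamma(k-2)$ (using $w(-4)=4$, $\abs{\Cl_4}=1$, $d_{\mathbf 1}=1$), hence $\omega_{\SK(g),M,k}=\frac{c_k^{4,\mathbf 1}}{\vol(\Gamma_0(M)\backslash\mathbb H_2)}\cdot\frac{\abs{a(1_2;\SK(g))}^2}{\langle\SK(g),\SK(g)\rangle}$. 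If $a(1_2;\SK(g))=0$ the term vanishes, so we may assume it is nonzero. Recall $\SK(g)$ is the classical lift of the elliptic newform $g$, which has weight $2k-2$ and level $M$, and that the classical formula for the Fourier coefficients of a Saito--Kurokawa lift identifies $a(1_2;\SK(g))$ with the coefficient $a(4;\Sh(g))$ of the half-integral weight companion appearing in Theorem~\ref{thm:brown}.

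Next I would bring in Brown's formula. When $\gcd(M,4)=1$, apply Theorem~\ref{thm:brown} with $D=-4$: since then $a(\abs D;\Sh(g))=a(4;\Sh(g))=a(1_2;\SK(g))$ these terms cancel, leaving
\[\frac{\abs{a(1_2;\SK(g))}^2}{\langle\SK(g),\SK(g)\rangle}=\frac{\pi\,4^{k-3/2}\,L(\tfrac12,\pi_g\times\chi_{-4})}{\mathcal B_{k,M}\,L(1,\pi_g)\,\langle g,g\rangle}.\]
When $M$ is even, choose instead a negative fundamental discriminant $D$ with $\gcd(D,M)=1$ and $a(\abs D;\Sh(g))\neq 0$ (such $D$ exists since $\Sh(g)\neq 0$), apply Theorem~\ref{thm:brown} with this $D$, and use Waldspurger's theorem to evaluate the ratio $\abs{a(4;\Sh(g))}^2/\abs{a(\abs D;\Sh(g))}^2$ of coefficients of the single form $\Sh(g)$ as $\frac{4^{k-3/2}L(1/2,\pi_g\times\chi_{-4})}{\abs D^{k-3/2}L(1/2,\pi_g\times\chi_D)}$ times a bounded local factor at $2$; every global constant and archimedean factor attached to $\Sh(g)$ cancels in this ratio, and substituting into Brown's formula yields the same identity up to an absolute bounded factor. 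So in all cases $\omega_{\SK(g),M,k}\asymp\frac{c_k^{4,\mathbf 1}\,4^{k}}{\vol(\Gamma_0(M)\backslash\mathbb H_2)\,\mathcal B_{k,M}}\cdot\frac{L(1/2,\pi_g\times\chi_{-4})}{L(1,\pi_g)\,\langle g,g\rangle}$.

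Then I would evaluate the explicit constant and sum. From $\vol(\Gamma_0(M)\backslash\mathbb H_2)\asymp M^3$, $[\Sp_4(\mathbb Z):\Gamma_0(M)]\asymp M^3$, $\prod_{p\mid M}(p^4+1)\asymp M^4$, $[\Gamma_0(M):\Gamma_0(4M)]\asymp 1$ and $2^{m}\ll_\epsilon M^\epsilon$ (with $m$ the number of primes dividing $M$), one gets $\mathcal B_{k,M}\asymp_\epsilon k\,M^{k+1}$ up to $M^{\pm\epsilon}$; the Legendre duplication formula gives $\Gamma(k-\tfrac32)\Gamma(k-2)=2^{5-2k}\sqrt\pi\,\Gamma(2k-4)$, and the Rankin--Selberg evaluation gives $\langle g,g\rangle\asymp_\epsilon (4\pi)^{2-2k}\,\Gamma(2k-2)\,(kM)^{\pm\epsilon}$ in the volume normalization. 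The powers of $4\pi$ and of $2$ cancel and $\Gamma(2k-4)/\Gamma(2k-2)\asymp k^{-2}$, so $\omega_{\SK(g),M,k}\ll_\epsilon\frac{(kM)^\epsilon}{k^{3}\,M^{k+4}}\cdot\frac{L(1/2,\pi_g\times\chi_{-4})}{L(1,\pi_g)}$. Summing over $g$, using $L(1,\pi_g)^{-1}\ll_\epsilon(kM)^\epsilon$ and the first moment bound $\sum_{g\in\mathcal S^{(1)}_{2k-2}(M)^{\#}_{\mathrm{new}}}L(1/2,\pi_g\times\chi_{-4})\ll_\epsilon(kM)^{1+\epsilon}$ (Petersson/Kuznetsov formula together with an approximate functional equation, the diagonal term contributing the main term of size $\asymp kM$), we conclude $\sum_{g}\omega_{\SK(g),M,k}\ll_\epsilon k^{-2+\epsilon}M^{-k-3+\epsilon}$, which for $k\geq 6$ is $\ll_\delta k^{-2+\delta}M^{-5+\delta}$.

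I expect the main obstacle to be the bookkeeping in the last two steps: one must keep the several Petersson normalizations consistent (the paper's volume-normalized products versus those in Brown's original statement, a discrepancy that affects only the power of $M$, where there is ample slack) and track the $k$-dependence exactly, since the target $k^{-2}$ allows no loss — it is precisely the cancellation of the archimedean factors via the duplication formula and the Rankin--Selberg value of $\langle g,g\rangle$ that makes this power come out right. The even-$M$ appeal to Waldspurger's theorem and the two $L$-function inputs (the lower bound for $L(1,\pi_g)$ and the first moment estimate) are routine and standard.
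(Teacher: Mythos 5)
Your proof follows the same strategy as the paper's: substitute Brown's formula (Theorem~\ref{thm:brown}) into the definition of $\omega_{\SK(g),M,k}$ and reduce the corollary to a first moment bound for $L(\tfrac12,\pi_g\times\chi_{-4})/L(1,\pi_g)$. Your unwinding of the weight, the duplication-formula cancellations, the asymptotics for $\mathcal B_{k,M}$, and the combination of the Rankin--Selberg evaluation of $\langle g,g\rangle$ with the standard first moment estimate all check out (up to bounded constants), and the slack in the target bound — you obtain $M^{-(k+3)+\epsilon}k^{-2+\epsilon}$ where only $M^{-5+\delta}k^{-2+\delta}$ is needed — easily absorbs the small normalization ambiguities.

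The one place you genuinely add something is the case $2\mid M$. Theorem~\ref{thm:brown} as quoted requires $\gcd(M,D)=1$, so taking $D=-4$ is only legitimate for odd $M$; the paper's proof simply applies Brown with $D=-4$ and does not address this. Your patch (pick a coprime $D'$ for Brown, then relate $\abs{a(4;\Sh(g))}^2$ to $\abs{a(\abs{D'};\Sh(g))}^2$ by Waldspurger, absorbing a bounded local factor at $2$) is a reasonable resolution, though it is stated loosely: the local factor at $2$ for $D=-4$ when $2\mid M$ and the validity of $a(4;\Sh(g))=a(1_2;\SK(g))$ for even $M$ both deserve a citation or a short argument rather than an assertion, since the Kohnen plus space at even level (and the precise behaviour of the half-integral Waldspurger constant when the conductor of $\chi_D$ and the level share a prime) is exactly where such identities can pick up extra fussy local factors. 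Your estimation is also more careful than the paper's: the paper asserts the intermediate sum $\sum_g\frac{\Gamma(2k-3)}{(4\pi)^{2k-3}\langle g,g\rangle}\frac{L(1/2,\pi_g\times\chi_D)}{L(1,\pi_g)}$ is $\ll\log(Mk)$ by analogy with the $M=1$ case, but with the paper's volume-normalized inner product this sum is actually $\asymp M^{1+o(1)}$; your bookkeeping gets this right, and in either case the conclusion is unaffected because of the ample $M^k$ slack.
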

\begin{proof}  Let $g \in \mathcal{S}_k^{(1)}(M)^{\#}_{\text{new}}$, and assume for now $a(1_2; \SK(g)) \neq 0$.  By the construction of the classical Saito--Kurokawa lifting we have $a(4; \Sh(g)) = a(1_2; \SK(g))$, so we can apply Theorem \ref{thm:brown} with $D=-4$.  Substituting this in to the formula for $\omega_{\SK(g), M, k} = \omega_{\SK(g), M, k}^{4, \mathbf{1}}$ we have
\[\omega_{\SK(g), M, k} = \frac{\pi^2}{2\vol(\Gamma_0^{(2)}(M) \backslash \mathbb{H}_2) \mathcal{B}_{k, M} (k-2)}\frac{\Gamma(2k-3)}{(4\pi)^{2k-3}\langle g, g\rangle}\frac{L(\frac{1}{2}, \pi_g \times \chi_D)}{L(1, \pi_g)}.\]
If $a(1_2; \SK(g)) = 0$ then clearly the weight is zero.  In any case the sum we are trying to bound is majorized by a constant (depending on $k$ and $M$) multiplied by
\[\sum_{g \in \mathcal{S}_k^{(1)}(M)_{\text{new}}^{\#}} \frac{\Gamma(2k-3)}{(4\pi)^{2k-3} \langle g, g\rangle} \frac{L(\frac{1}{2}, \pi_g \times \chi_D)}{L(1, \pi_g)}.\]
We can now argue as in \cite{KowalskiSahaTsimerman2012} \S5.3 (where $M=1$) to see that this sum is $\ll \log(Mk)$.  Note that the factor of $[\Sp_4(\mathbb{Z}):\Gamma_0(M)]$ cancels out the normalisation in $\vol(\Gamma_0(M) \backslash \mathbb{H}_n)$, but the $M^k$ in the numerator and our ubiquitous assumption that $k \geq 6$ give us (after sacrificing a power of $M$ to the $2^{m+3}$ in the denominator) the claimed bound.  \end{proof}

\noindent Finally we must show that the Saito--Kurokawa lifts exhaust all problematic cases.  Thus suppose $f \in \mathcal{S}_k(N)^{\#}$ is such that $\pi_f$ has a local parameter with absolute value $p^{1/2}$ at some prime $p \mid N$.  We will show that there exists an irreducible cuspidal automorphic representation $\pi$ of $\PGL_2$, corresponding to a newform $g \in \mathcal{S}_{2k-2}^{(1)}(M)$, such that $\Phi(f) \in \SK(\pi)$.\\

\noindent  By our assumption (\ref{eqn:bound-towards-ramanujan}) $\pi_f$ is CAP -- in fact it follows from \cite{PitaleSchmidt2009} Corollary 4.5 that $\pi_f$ is associated to the Siegel parabolic $P$.  Fix an additive character $\psi$ of $\mathbb{Q} \backslash \mathbb{A}$, and write $\theta(\cdot, \psi)$ for the theta lifting from $\widetilde{\SL}_2$ to $\PGSp_4$.  Then by \cite{Piatetski-Shapiro1983} Theorem 2.2 $\pi_f = \theta(\widetilde{\pi}, \psi)$ for some irreducible cuspidal automorphic representation $\widetilde{\pi}$ of $\widetilde{\SL}_2$.  The representation $\widetilde{\pi}$ is not $\psi$-generic (c.f. \cite{Piatetski-Shapiro1983} Theorem 2.4), which implies that it does not participate in the theta correspondence with $\PGL_2$.  \\

\noindent On the other hand, let $S$ be the (finite) set of places at which $\widetilde{\pi}_v$ is the non-generic element in the fiber of the local Waldspurger correspondence between $\widetilde{\SL}_2$ and $\PGL_2$.  Replacing $\widetilde{\pi}_v$ with the generic element in the fiber we will obtain a globally $\psi$-generic representation of $\widetilde{\SL}_2$ which does have a non-vanishing theta lift to $\PGL_2$; write $\pi$ for this lift.  By the definition in \cite{Schmidt2005b} (and multiplicity one for theta lifts from $\widetilde{\SL}_2$) we have $\pi_f = \Pi(\pi \times \pi_S)$, with $S$ as above.  \\

\noindent It remains to see that $\pi$ in fact corresponds to a holomorphic newform $g \in \mathcal{S}_{2k-2}^{(1)}(M)$ where $M \mid N$ (the choice of $S$ is then forced to be the one defining $\SK(\pi)$ by table (30) of \cite{Schmidt2007}).  By examining Table 2 of \cite{Schmidt2005b} we easily deduce that $\pi$ has the correct $\infty$-type (and that $\infty \in S$) by knowing the $\infty$-type $\pi_f$.  Similarly knowing that all the local components of $\pi_f$ must have Iwahori-spherical vectors we deduce that $\pi$ is nowhere supercuspidal.  Finally we see that the set of finite primes at which $\pi$ is a discrete series is a subset of the set of finite primes at which $\pi_f$ is not a principal series.  Thus $\pi$ corresponds to a holomorphic newform $g$ as above.

\begin{remark}  The preceding paragraph only shows that our problem cases are contained in the Saito--Kurokawa cases.  Certain Saito--Kurokawa representations may not be a problem: for example an elliptic modular form of squarefree level with all Atkin--Lehner eigenvalues equal to $-1$ will have small local factors at ramified primes.  It will have large local factors at unramified primes, but these are dealt with by Theorem \ref{local-equidistribution-theorem-quantitative}.\end{remark}

\begin{corollary}\label{crlry:bounding-weights-on-problem-cases}  Let $\mathcal{P} = \{f \in \mathcal{S}_k(N)^{\#}; \text{(\ref{eqn:bound-towards-ramanujan}) does not hold for }\pi_f\}$.  Then, for any $\delta > 0$,
\[\sum_{f \in \mathcal{P}} \omega_{f, N, k} \ll_\delta \frac{1}{N^{3}k^{2-\delta}}\]\end{corollary}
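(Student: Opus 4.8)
The plan is to show that the set $\mathcal{P}$ of problem forms is, up to the weight sum, confined to the Saito--Kurokawa subspaces identified in the preceding discussion, to reduce the sum over $\mathcal{P}$ to a sum over classical Saito--Kurokawa lifts of newforms of level $M\mid N$, and then to invoke Corollary \ref{crlry:to-brown}. By the discussion preceding this corollary, every $f\in\mathcal{P}$ satisfies $\Phi_f\in\SK(\pi)$ for a cuspidal automorphic representation $\pi$ of $\PGL_2$ attached to a newform $g\in\mathcal{S}_{2k-2}^{(1)}(M)$ with $M\mid N$. For such $\pi$ let $V_\pi$ be the space of $f\in\mathcal{S}_k(N)$ with $\Phi_f\in\SK(\pi)$; by Lemma \ref{lem:number-of-SK-oldforms} it has dimension $3^r$ with $r$ the number of primes dividing $N/M$, and by Lemma \ref{lem:basis-for-SK} it has the explicit basis $\mathcal{B}_N$. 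Every vector of $V_\pi$ is an eigenform of $T(p),T_1(p^2)$ for all $p\nmid N$ with eigenvalues depending only on $\pi$, and distinct newforms produce distinct such systems, so $V_\pi$ is a sum of simultaneous Hecke eigenspaces; hence the fixed basis $\mathcal{S}_k(N)^{\#}$ meets $V_\pi$ in an orthogonal basis of $V_\pi$, and $\mathcal{P}\subseteq\bigsqcup_\pi(\mathcal{S}_k(N)^{\#}\cap V_\pi)$. Since $\omega_{f,N,k}=Q(f)/\langle f,f\rangle$ for the fixed nonnegative hermitian form $Q(f)=c_k^{4,\mathbf{1}}\vol(\Gamma_0(N)\backslash\mathbb{H}_2)^{-1}\abs{a(1_2;f)}^2$, the sum of $\omega_{f,N,k}$ over any orthogonal basis of $V_\pi$ is the trace of the associated operator on $V_\pi$, hence basis-independent (cf.\ Lemma \ref{measure-well-defined}); so it suffices to bound $\sum_{f\in\text{ONB}(V_\pi)}\omega_{f,N,k}$ for a convenient basis and then sum over $\pi$.

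Next I would evaluate this inner sum by Parseval. With $d=4$, $\Lambda=\mathbf{1}$ the class group $\Cl_4$ is trivial and $\mathsf{S}_c=1_2$, so for any orthogonal basis $\{f_j\}$ of $V_\pi$ one has $\sum_j\omega_{f_j,N,k}=c_k^{4,\mathbf{1}}\vol(\Gamma_0(N)\backslash\mathbb{H}_2)^{-1}\lVert\ell\rVert^2_{V_\pi}$, where $\ell(f)=a(1_2;f)$ and the norm is taken with respect to the Petersson product. Now by Lemma \ref{lem:basis-for-SK} every element of $\mathcal{B}_N$ other than $\SK(g)$ is obtained from $\SK(g)$ by at least one of the maps $T_1(p,\cdot)$, $T_3(p,\cdot)$ with $p\mid N/M$, and both of these produce forms whose Fourier expansion is supported on $p\cdot(\text{semi-integral matrices})$; iterating, every such basis vector is supported away from $1_2$, so has vanishing $1_2$-coefficient. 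Thus $\ell$ vanishes on the hyperplane $W:=\operatorname{span}(\mathcal{B}_N\setminus\{\SK(g)\})$ of $V_\pi$, and therefore
\[\lVert\ell\rVert^2_{V_\pi}=\frac{\abs{a(1_2;\SK(g))}^2}{\mathrm{dist}(\SK(g),W)^2}.\]

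The heart of the argument is therefore the lower bound $\mathrm{dist}(\SK(g),W)^2\gg\langle\SK(g),\SK(g)\rangle$, with an absolute (or at worst $N^{o(1)}$) implied constant; equivalently, a lower bound on the least eigenvalue of the Gram matrix of $\mathcal{B}_N$. This is where the special structure is exploited: $\mathcal{B}_N$ is built multiplicatively over the primes $p\mid N/M$, and since $\SK(g)$ is a $T(p)$-eigenform whose Fourier coefficients depend only on the determinant of the index, the Gram matrix of $\mathcal{B}_N$ factors as $\langle\SK(g),\SK(g)\rangle$ times a tensor product of explicit positive-definite $3\times 3$ local Gram matrices. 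Using the classical $T(p)$-relations for $\SK(g)$ and Deligne's bound on the Hecke eigenvalue of $g$ at $p$, the off-diagonal entries of each local factor are $O(p^{-1})$, so its conditioning degrades by only $1+O(p^{-2})$; taking the product over $p\mid N/M$ (which converges) then gives the claimed bound. Carrying out this local Gram computation and verifying that the product over all primes of $N/M$ costs at most an absolute constant is the step I expect to be the most delicate.

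Finally I would assemble. For $\pi$ attached to $g\in\mathcal{S}_{2k-2}^{(1)}(M)^{\#}_{\mathrm{new}}$ the previous two steps give
\[\sum_{f\in\mathcal{S}_k(N)^{\#}\cap V_\pi}\omega_{f,N,k}\ll\frac{c_k^{4,\mathbf{1}}}{\vol(\Gamma_0(N)\backslash\mathbb{H}_2)}\cdot\frac{\abs{a(1_2;\SK(g))}^2}{\langle\SK(g),\SK(g)\rangle}=\frac{\vol(\Gamma_0(M)\backslash\mathbb{H}_2)}{\vol(\Gamma_0(N)\backslash\mathbb{H}_2)}\,\omega_{\SK(g),M,k},\]
and since $\vol(\Gamma_0(L)\backslash\mathbb{H}_2)\asymp L^3$ this is $\ll (M/N)^3\,\omega_{\SK(g),M,k}$. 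Summing over $g$ via Corollary \ref{crlry:to-brown} gives $\ll_\delta (M/N)^3 M^{-5+\delta}k^{-2+\delta}=N^{-3}M^{-2+\delta}k^{-2+\delta}$ for each fixed $M$, and then summing over the divisors $M\mid N$ (where $\sum_{M\mid N}M^{-2+\delta}=O(1)$) yields $\sum_{f\in\mathcal{P}}\omega_{f,N,k}\ll_\delta N^{-3}k^{-2+\delta}$, as asserted.
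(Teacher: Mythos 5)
Your approach is structurally different from the paper's and, in fact, more careful.  The paper's own proof argues that, after normalizing the $\mathcal{B}_N$-coefficient of $\SK(g)$ to be $1$, one has $a(1_2; f) = a(1_2; \SK(g))$ and therefore $\omega_{f,N,k} = \omega_{\SK(g),N,k}$ outright.  This last step is not justified: with the normalization in place one indeed gets $a(1_2;f)=a(1_2;\SK(g))$, but $\omega_{f,N,k}$ also contains $\langle f,f\rangle$ in its denominator, and $\langle f,f\rangle=\langle\SK(g),\SK(g)\rangle$ is \emph{not} automatic when $f = \SK(g) + (\text{elements of }W)$ since $\mathcal{B}_N$ is not claimed to be orthogonal.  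You avoid this by routing through Lemma~\ref{measure-well-defined} (basis-invariance of the weighted sum on each Hecke-eigenspace $V_\pi$), then using Parseval/Riesz to write $\sum_{f\in\mathrm{ONB}(V_\pi)}\omega_{f,N,k} = \tfrac{c_k^{4,\mathbf{1}}}{\vol(\Gamma_0(N)\backslash\mathbb{H}_2)}\,\lVert\ell\rVert^2$ with $\ell(f)=a(1_2;f)$, and then observing $\ell$ vanishes on $W=\operatorname{span}(\mathcal{B}_N\setminus\{\SK(g)\})$, giving the distance formula.  This makes visible exactly which quantitative input is needed: $\operatorname{dist}(\SK(g),W)^2\gg\langle\SK(g),\SK(g)\rangle$, i.e.\ a uniform lower bound on the least eigenvalue of the Gram matrix of $\mathcal{B}_N$.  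This is genuinely the crux of the argument, and both the paper's proof and yours require it; the paper implicitly assumes it (or bypasses it incorrectly), while you state it explicitly, and sketch (but do not carry out) a tensor-product/local-Gram-matrix argument for it.  So your proposal is closer to a complete proof and isolates the correct technical heart, but the Gram-matrix lower bound you flag as ``the most delicate step'' is still left as a claim.

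One small caution on the proposed filler: you appeal to ``Deligne's bound on the Hecke eigenvalue of $g$'' in estimating the local $3\times 3$ Gram matrices, and this is fine since $g$ is an elliptic newform; but note that the relevant inner products involve the degree-$2$ form $\SK(g)$ and the maps $T_1(p,\cdot)$, $T_3(p,\cdot)$, so one must verify with the explicit Fourier-coefficient description of Saito--Kurokawa lifts (coefficients depend only on $\det T$) that the off-diagonal entries, normalized by $\langle\SK(g),\SK(g)\rangle$, really decay in $p$.  The claim that the resulting product over $p\mid N/M$ is $O(1)$, or at worst $N^{o(1)}$, then needs to be checked; either would suffice for the stated bound since a loss of $N^{o(1)}$ is absorbed by the $\delta$.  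Your final assembly (summing over $g$ via Corollary~\ref{crlry:to-brown}, then over $M\mid N$ with $\vol(\Gamma_0(M))/\vol(\Gamma_0(N))\asymp (M/N)^3$ and $\sum_{M\mid N}M^{-2+\delta}=O(1)$) is correct and matches the paper's.
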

\begin{proof}  Let $f \in \mathcal{P}$.  By the preceding discussion we know that there exists an irreducible cuspidal automorphic representation $\pi$ of $\PGL_2$ corresponding to a newform $g$ such that $\Phi_f \in \SK(\pi)$.  Thus $f$ is a sum of the basis elements of $\mathcal{B}_N$ from Lemma \ref{lem:basis-for-SK}.  Normalising (recall $\omega_{f, N, k}$ is invariant under rescaling) we may assume that the coefficient of $\SK(g)$ (if nonzero) is one.  Since all elements $f'$ other than $\SK(g)$ of the basis clearly have $a(1_2; f') = 0$, and hence $\omega_{f', N, k} = 0$, it follows that $\omega_{f, N, k}$ is either zero (if the coefficient of $\SK(g)$ is) or we have $\omega_{f, N, k} = \omega_{\SK(g), N, k}$.  The result then follows from Corollary \ref{crlry:to-brown} and the fact that $\omega_{\cdot, N, k} \asymp \frac{1}{(N/M)^3}\omega_{\cdot, M, k}$.\end{proof}
\section{Low lying zeros}\label{sctn:low-lying-zeros}

\noindent We now proceed with the proof of Theorem \ref{thm:low-lying-zeros}, beginning with the computations at the archimedean place.  If $f \in \mathcal{S}_k(N)^*$ then the gamma factor of the $L$-function of the representation $\pi_f$ is given by (\ref{eqn:weight-k-gamma-factor}).  As before let $\Phi$ be an even Schwartz function, and now consider the expression
\[\begin{aligned} &\frac{1}{2\pi} \int_\mathbb{R} \left(\frac{\gamma'}{\gamma}\left(\frac{1}{2}+it, \pi_f\right) + \frac{\gamma'}{\gamma}\left(\frac{1}{2}-it, \pi_f\right)\right)\Phi\left(\frac{t}{2\pi} \log C_{k, N} \right) dt \\
&= \frac{1}{\log C_{k, N}}\int_\mathbb{R} \left(\frac{\gamma'}{\gamma}\left(\frac{1}{2} + \frac{2 \pi i x}{\log C_{k, N}}, \pi_f\right) + \frac{\gamma'}{\gamma}\left(\frac{1}{2} - \frac{2 \pi i x}{\log C_{k, N}}, \pi_f\right)\right)\Phi(x) dx.\end{aligned}\]
Arguing from (\ref{eqn:weight-k-gamma-factor}) as in \cite{DuenezMiller2006} we see that 
\[\frac{1}{\log C_{k, N}}\int_\mathbb{R} \left(\frac{\gamma'}{\gamma}\left(\frac{1}{2} + \frac{2 \pi i x}{\log C_{k, N}}, \pi_f\right) + \frac{\gamma'}{\gamma}\left(\frac{1}{2} - \frac{2 \pi i x}{\log C_{k, N}}, \pi_f\right)\right)\Phi(x) dx = \widehat{\Phi}(0)\frac{\log k^2}{\log C_{k, N}} + O\left(\frac{1}{\log  C_{k, N}} \right).\]
Now setting $h(x) = \Phi(x \log C_{k, N})$ (and hence $\widehat{h}(t) = \frac{1}{\log C_{k, N}} \widehat{\Phi}\left(\frac{t}{\log C_{k, N}}\right)$) in the explicit formula (\ref{eqn:explicit-formula}), using the above archimedean computation and
\[\widehat{\Phi}(0)\frac{\log q(\pi_f)}{\log C_{k, N}} + \widehat{\Phi}(0)\frac{\log k^2}{\log C_{k, N}}  + O\left(\frac{1}{\log  C_{k, N}} \right) = \frac{\log C_{\pi_f}}{\log C_{k, N}}\widehat{\Phi}(0) + O\left(\frac{1}{\log C_{k, N}}\right),\]
we get
\[\sum_\rho \Phi\left(\frac{\gamma}{2 \pi} \log C_{k, N}\right) = \frac{\log C_{\pi_f}}{\log C_{k, N}}\widehat{\Phi}(0) - \frac{2}{\log C_{k, N}} \sum_p \log p  \sum_{m \geq 1} c(\pi, p^m) p^{-m/2} \widehat{\Phi}\left(\frac{m \log p}{\log C_{k, N}}\right) + O\left(\frac{1}{\log C_{k, N}}\right).\]
Averaging over $f \in \mathcal{S}_k(N)^\#$ we therefore obtain
\begin{equation}\label{eqn:average-explicit-formula} D(k, N; \Phi) = \widehat{\Phi}(0) - \frac{1}{\sum_{f} \omega_{f, k, N}} \sum_{f}\omega_{f, k, N} \frac{2}{\log C_{k, N}} \sum_p \log p \sum_{m \geq 1} c(\pi_f, p^m)p^{-m/2} \widehat{\Phi}\left(\frac{m \log p}{\log C_{k, N}}\right) + O\left(\frac{1}{\log C_{k, N}}\right)\end{equation}
It remains to deal with the term involving the triple sum.  It is not difficult to see that for each $m \geq 3$ the sum over primes (even without the cutoff provided by $\widehat{\Phi}$) is finite and therefore the whole term can be absorbed in to the $O(1/\log C_{k, N})$.  Thus it suffices to estimate the sum over primes when $m=1$ and $m=2$.  \\

\noindent First consider $m=1$.  When $p$ is an unramified prime we argue as in \cite{KowalskiSahaTsimerman2012}: use the definition (\ref{eqn:U_p-definition}) and Proposition \ref{convergence-on-U_p} to see 
\[\begin{aligned} \frac{1}{\sum_f \omega_{f, k, N}} \sum_{f} \omega_{f, k, N} c(\pi_f, p) &= \frac{1}{\sum_f \omega_{f, k, N}}\sum_f \omega_{f, k, N}\left( U_p^{1, 0}(a_p(\pi_f), b_p(\pi_f)) + \lambda_p p^{-1/2} \right) \\
&= \lambda_p p^{-1/2} + O_\epsilon(N^{-1} k^{-2/3} p^{1+\epsilon}).\end{aligned}\]
When $p$ is a ramified prime, using Corollary \ref{crlry:bounding-weights-on-problem-cases} (and its notation)
\[\begin{aligned} \abs{\frac{1}{\sum_{f \in \mathcal{S}_k(N)^\#} \omega_{f, k, N}} \sum_{f \in \mathcal{S}_k(N)^\#} \omega_{f, k, N} c(\pi_f, p)} &\leq \frac{4}{\sum_{f \in \mathcal{S}_k(N)^\#} \omega_{f, k, N}} \left[\sum_{\substack{f \in \mathcal{S}_k(N)^\# \\ f \notin \mathcal{P}}} \omega_{f, k, N} p^\theta + \sum_{\substack{f \in \mathcal{S}_k(N)^\# \\ f \in \mathcal{P}}} \omega_{f, k, N} p^{1/2}\right] \\
&\ll 4p^\theta + \frac{p^{1/2}}{N^{3}k^{2-\delta}}.\end{aligned}\]
Thus, assuming that $\widehat{\Phi}$ is supported in $[-\alpha, \alpha]$,
\begin{equation}\label{eqn:m=1-collected}\begin{aligned}&\frac{1}{\sum_{f} \omega_{f, k, N}} \sum_{f}\omega_{f, k, N} \frac{2}{\log C_{k, N}} \sum_p \log (p)  c(\pi_f, p)p^{-1/2} \widehat{\Phi}\left(\frac{\log p}{\log C_{k, N}}\right) \\
&\qquad= \frac{2}{\log C_{k, N}} \left(\sum_{p\nmid N} \frac{\lambda_p \log p}{p} \widehat{\Phi}\left(\frac{\log p}{\log C_{k, N}}\right) + O_\epsilon\left(\frac{1}{N k^{2/3}}\sum_{\substack{p \leq C_{k, N}^{\alpha}}} p^{\frac{1}{2} + \epsilon}\right) + O\left(\sum_{p \mid N} \log(p) p^{\left(\theta-\frac{1}{2}\right)}\right)\right).\end{aligned}\end{equation}
We have left out the contribution at ramified primes from $f \in \mathcal{P}$ because this is clearly negligible.  For the remaining sum over ramified primes, the hypothesis $\theta<1/2$ and the fact that $\#\{p \mid N\} = o(\log N)$ show that the sum is $o(\log N)$.  By the hypothesis (\ref{eqn:conductor-is-polynomial}) this is in turn $o(\log C_{k, N})$, and so the sum over $p \mid N$ is negligible due to the presence of the $\frac{1}{\log C_{k, N}}$ factor in front.  By choosing $\alpha$ small enough we will show that the second term is negligible as well.  For the first term note that $\lambda_p$ takes the value $0$ or $2$ each on sets of primes of asymptotic density $1/2$, so by the prime number theorem
\[\begin{aligned}\frac{2}{\log C_{k, N}} \sum_p \frac{\lambda_p \log p}{p} \widehat{\Phi}\left(\frac{\log p}{\log C_{k, N}}\right) &= 2\int_{1}^\infty \widehat{\Phi}\left(\frac{\log x}{\log C_{k, N}}\right) \frac{1}{\log C_{k, N}} \frac{dx}{x} + o(1) \\
&= 2\int_0^\infty \widehat{\Phi}(x) dx + o(1) \\
&= \Phi(0) + o(1)\end{aligned}\]
where the last equality follows from the factor that $\Phi$ is even.  Now the left hand side is the same as the first sum in (\ref{eqn:m=1-collected}) except that we imposed the restriction $p \nmid N$ in the latter: the difference between the two is easily seen to be $O\left(\frac{1}{\log C_{k, N}}\right)$ (remembering the constant factor $\frac{1}{\log(C_{k, N})}$ in front), so we conclude
\[\begin{aligned}&\frac{1}{\sum_{f} \omega_{f, k, N}} \sum_{f}\omega_{f, k, N} \frac{2}{\log C_{k, N}} \sum_p \log (p)  c(\pi_f, p)p^{-1/2} \widehat{\Phi}\left(\frac{\log p}{\log C_{k, N}}\right) \\
&\qquad= {\Phi}(0) + O_\epsilon\left(\frac{1}{\log C_{k, N} N k^{2/3}}\sum_{\substack{p \leq C_{k, N}^{\alpha}}} p^{\frac{1}{2} + \epsilon}\right) + O\left(\frac{1}{\log C_{k, N}}\right).\end{aligned}\]

\noindent Next consider $m=2$.  When $p$ is unramified we again argue as in \cite{KowalskiSahaTsimerman2012}: begin with the formula
\[c(\pi_f, p^2) = U_p^{2, 0}(a_p(\pi_f), b_p(\pi_f)) + \frac{\lambda}{\sqrt{p}}U_p^{1, 0}(a_p(\pi_f), b_p(\pi_f)) - \tau(a_p(f), b_p(f)) - 1 - \frac{1}{p}\left(\frac{d}{p}\right).\]
Averaging this over $f$ with the help of Proposition \ref{convergence-on-U_p} we have
\[\frac{1}{\sum_{f} \omega_{f, k, N}} \sum_f \omega_{f, k, N} c(\pi_f, p^2) = -1 - \frac{1}{\sum_{f} \omega_{f, k, N}} \sum_f \omega_{f, k, N} \tau(a_p(f), b_p(f)) + O_\epsilon\left(\frac{p^{2+\epsilon}}{Nk^{2/3}}\right) + O_\epsilon\left(\frac{p^{1+\epsilon}}{Nk^{2/3}}\right) + O\left(\frac{1}{p}\right).\]
Appealing to the definitions \ref{eqn:U_p-definition} and Proposition \ref{convergence-on-U_p} with $U_p^{0,1}$ we have that
\[\frac{1}{\sum_{f} \omega_{f, k, N}} \sum_f \omega_{f, k, N} \tau(a_p(f), b_p(f)) = O_\epsilon\left(\frac{p^{\frac{3}{2} + \epsilon}}{Nk^{2/3}}\right).\]
For the ramified primes we argue as before and obtain the same result with $p^\theta$ replaced by $p^{2\theta}$ in the first term on the RHS, and $p^{1/2}$ replace by $p$ in the second.  Again the ramified contribution from $f \in \mathcal{P}$ is clearly negligible and we obtain
\[\begin{aligned}&\frac{1}{\sum_{f} \omega_{f, k, N}} \sum_{f}\omega_{f, k, N} \frac{2}{\log C_{k, N}} \sum_p \log (p)  c(\pi_f, p^2)p^{-1} \widehat{\Phi}\left(\frac{2\log p}{\log C_{k, N}}\right) \\
&\qquad= \frac{2}{\log C_{k, N}} \left(-\sum_{p\nmid N} \frac{\lambda_p \log p}{p} \widehat{\Phi}\left(\frac{2\log p}{\log C_{k, N}}\right) + O_\epsilon\left(\frac{1}{N k^{2/3}}\sum_{\substack{p \leq C_{k, N}^{\alpha/2}}} p^{1 + \epsilon}\right) + O\left(\sum_{p \mid N} \log(p) p^{\left(\theta-1\right)}\right)\right).\end{aligned}\]
The sum over $p \mid N$ is even more negligible than before.  We postpone choosing $\alpha$ sufficiently small for a little longer and consider the main term, which similarly to before is a negligible distance from
\[-\frac{2}{\log C_{k, N}}\sum_p \frac{\lambda_p \log p}{p} \widehat{\Phi}\left(\frac{2 \log p}{\log C_{k, N}}\right) = -\frac{1}{2} \Phi(0) + o(1)\]
(using the prime number theorem as before).  Thus
\[\begin{aligned}&\frac{1}{\sum_{f} \omega_{f, k, N}} \sum_{f}\omega_{f, k, N} \frac{2}{\log C_{k, N}} \sum_p \log (p)  c(\pi_f, p^2)p^{-1} \widehat{\Phi}\left(\frac{2\log p}{\log C_{k, N}}\right)\\
&= -\frac{1}{2}\Phi(0) + O_\epsilon\left(\frac{1}{\log C_{k, N} N k^{2/3}}\sum_{\substack{p \leq C_{k, N}^{\alpha/2}}} p^{1 + \epsilon}\right) + O\left(\frac{1}{\log C_{k, N}}\right).\end{aligned}\]
Finally it remains to choose $\alpha$ small enough such that the two sums
\[\frac{1}{\log C_{k, N} N k^{2/3}} \sum_{p \leq C_{k, N}^{\alpha/2}} p^{1 + \epsilon} = O\left(\frac{C_{k, N}^{\alpha+\epsilon}}{\log C_{k, N} N k^{2/3}}\right) \]
and
\[\frac{1}{\log C_{k, N} N k^{2/3}} \sum_{p \leq C_{k, N}^{\alpha}} p^{\frac{1}{2} + \epsilon} = O\left(\frac{C_{k, N}^{\frac{3\alpha}{2}+\epsilon}}{\log C_{k, N} N k^{2/3}}\right)\] 
are, say, $O(1/\log C_{k, N})$.  Since $C_{k, N} \ll N^2k^2$ we can do this with $\alpha < 2/9$.\\

\noindent  We end with a few remarks regarding Theorem \ref{thm:low-lying-zeros}.  Firstly, it should be possible to improve the range of $\alpha$ (which is typically desirable in low-lying zeros questions) with better estimation in the above.  If one were to study families with orthogonal symmetry then one would require $\alpha > 1$ to distinguish the type of orthogonal symmetry (c.f. \cite{IwaniecLuoSarnak2000} \S1 Remark D), but our $\alpha = 2/9$ is large enough to bear witness to the symplectic-type distribution of the low-lying zeros of our weighted family of $L$-functions.\\

\noindent Given that symplectic-type distribution was observed in the weight aspect alone version of this problem in \cite{KowalskiSahaTsimerman2012}, the result of Theorem \ref{thm:low-lying-zeros} is of course expected.  However this is in contrast to what one observes if one uses a constant weight in place of $\omega_{f, N, k}$.  It follows from \cite{ShinTemplier2013}, since the Frobenius--Schur indicator of the tautological representation of $\GSp_4(\mathbb{C})$ is $-1$, that we see even orthogonal symmetry in this case.  This statement holds in either the weight or level aspect version of our problem, and should hold in both simultaneously.  Thus the difference in symmetry type must be due to the weighting $\omega_{f, k, N}$.  As we have mentioned before this has been interpreted in \cite{KowalskiSahaTsimerman2012} \S5.4 as evidence for a version of B\"{o}cherer's conjecture, and a similar discussion is applicable in the context of increasing levels.

\end{document}